\newcommand{\bbr}{I\!\!R}
\newcommand{\bbt}{T\!\!\!T}
\newcommand{\bbn}{I\!\!N}
\newcommand{\bbz}{Z\!\!\!Z}
\newcommand{\cala}{{\cal A}}
\newcommand{\calb}{{\cal B}}
\newcommand{\calc}{{\cal C}}
\newcommand{\cale}{{\cal E}}
\newcommand{\calf}{{\cal F}}
\newcommand{\calg}{{\cal G}}
\newcommand{\call}{{\cal L}}
\newcommand{\calp}{{\cal P}}
\newcommand{\cals}{{\cal S}}
\newcommand{\barr}{\begin{array}}
\newcommand{\earr}{\end{array}}
\newcommand{\beqq}{\begin{equation}}
\newcommand{\eeqq}{\end{equation}}
\newcommand{\beao}{\begin{eqnarray*}}
\newcommand{\eeao}{\end{eqnarray*}\noindent}
\newcommand{\beam}{\begin{eqnarray}}
\newcommand{\eeam}{\end{eqnarray}\noindent}
\newcommand{\la}{\lambda}
\newcommand{\La}{\Lambda}
\newcommand{\si}{\sigma}
\newcommand{\al}{\alpha}
\newcommand{\om}{\omega}
\newcommand{\Om}{\Omega}
\newcommand{\vph}{\varphi}
\newcommand{\vep}{\varepsilon}
\newtheorem{theo}{Theorem}
\newtheorem{prop}{\indent Proposition}
\newtheorem{rem}{\indent Remark}
\newtheorem{lem}{\indent Lemma}
\newtheorem{defin}{\indent Definition}
\newtheorem{cor}{\indent Corollary}
\newtheorem{ex}{\indent Example}
\newtheorem{ass}{\indent Assumption}
\newenvironment{proof}{\noindent {\bf Proof }}
{\hfill $\bullet$ \vspace{0.25cm}}
\newcommand{\wh}{\widehat}
\newcommand{\wt}{\widetilde}
\newcommand{\ov}{\overline}
\newcommand{\Lra}{\Longrightarrow}
\newcommand{\lra}{\longrightarrow}
\newcommand{\tto}{t\to\infty}
\newcommand{\nto}{n\to\infty}
\author{R. H\"opfner$^*$ \and E.~L\"ocherbach \and M. Thieullen\thanks{This work has been supported by the Agence Nationale de la Recherche through the project MANDy, Mathematical Analysis of Neuronal Dynamics, ANR-09-BLAN-0008-01. e-mail addresses: {\tt hoepfner@mathematik.uni-mainz.de, eva.loecherbach@u-cergy.fr, michele.thieullen@upmc.fr} }\\
{\it Johannes Gutenberg-Universit\"at Mainz, Universit\'e de Cergy-Pontoise} \\
 {\it and Universit\'e Pierre et Marie Curie.}}
\begin{document}

\title{Ergodicity and limit theorems for degenerate diffusions \\ with time periodic drift. \\ Application to a stochastic Hodgkin-Huxley model.}

\maketitle

\begin{abstract}
We formulate simple criteria for positive Harris recurrence of strongly degenerate stochastic differential equations with smooth coefficients on a state space with certain boundary conditions.  The drift depends on time and space and is periodic in the time argument. There is no time dependence in the diffusion coefficient. Control systems play a key role, and we prove a new localized version of the support theorem. Beyond existence of some Lyapunov function, we only need one attainable inner point of full weak Hoermander dimension.  
 
Our motivation comes from a stochastic Hodgkin-Huxley model for a spiking neuron including its dendritic input. This input carries some deterministic periodic signal coded in its drift coefficient and is the only source of noise for the whole system. We have a 5d SDE driven by 1d Brownian motion. As an application of the general results above, we can prove positive Harris recurrence. Here analyticity of the coefficients and 
Nummelin splitting allow to formulate a Glivenko-Cantelli type theorem for the interspike intervals.  
\end{abstract}

{\small 
{\it Keywords} : degenerate diffusion processes, time inhomogeneous diffusion processes, weak Hoermander condition, support theorem, periodic ergodicity, Hodgkin-Huxley, dendritic input, spike trains.\\
{\it AMS Classification}  :  60 J 60, 60 J 25, 60 H 07 }

\section{Introduction}

Consider a $d$-dimensional diffusion driven by $m$-dimensional Brownian motion 
\beqq\label{sde}
dX_t \;=\; b(t,X_t)\, dt\;+\; \si(X_t)\, d W_t   \quad,\quad   t\ge 0   
\eeqq 
where $m\le d $, with smooth (and sometimes even analytic) coefficients 
\beqq\label{coefficients}
b(t,x) \;=\; \left( \begin{array}{l}  
b^1(t,x) \\ \quad\vdots \\ b^d(t,x) 
\end{array}\right) \quad,\quad 
\si(x) \;=\; \left( \begin{array}{lll}  
\si^{1,1}(x) & \quad\ldots\quad & \si^{1,m}(x) \\
\quad\vdots &   & \quad\vdots \\
\si^{d,1}(x) & \quad\ldots\quad & \si^{d,m}(x) 
\end{array}\right) \;. \\[2mm]
\eeqq
We require that the coefficients be such that for every starting point a unique strong solution exists and has infinite life time; the state space for process (\ref{sde}) will be some Borel subset $E$ of $\bbr^d$ with Borel-$\si$-field $\cale$, see Section \ref{sec:setting},  
satisfying certain boundary conditions.

We are interested in Harris properties of the process $X$ (which is non-homogeneous in time) under the assumption that the drift is periodic in the time argument, with main focus on case $m<d$ where the SDE (\ref{sde}) is degenerate. 
Our criteria will be in terms of control systems and the support theorem --assuming principally that there exists one inner point of the state space which is of full weak Hoermander dimension and attainable in a sense of deterministic control-- and in terms of some Lyapunov function. 
We use \cite{HH-TD} for existence of smooth transition densities locally at the attainable point. 

Our assumptions on the state space and on the coefficients of the SDE's are such that 
results which we prove for general degenerate SDE's can be applied to stochastic Hodgkin-Huxley models where dendritic input is the only source of noise, and where a determinisitic $T$-periodic signal is encoded in the semigroup of the process. For such models we can establish positive Harris recurrence.

Our main results in the general setting are positive Harris recurrence of strongly degenerate SDE's  (Theorem \ref{theo:1}) and a new localized version of the support theorem (Theorem \ref{theo:4bis}) adapted to the specific structure of our state space. For stochastic Hodgkin-Huxley models, we obtain the Harris property (Theorem \ref{theo:2})  from explicit construction of a Lyapunov function and of a control steering trajectories towards the attainable point. We do this for Cox-Ingersoll-Ross type input and for Ornstein-Uhlenbeck type input. In the latter case, we improve on \cite{OU-HH} since we obtain a unique Harris set. 
We need the Harris property to give a sound mathematical meaning to "typical spiking behaviour of a neuron" in terms of long time behaviour, and prove convergence of empirical distribution functions for the interspike times (Glivenko-Cantelli type Theorem \ref{theo:4}) by combining analyticity of the coefficients with the Harris property.  

The plan of the paper is as follows: the setting and the assumptions under which we are working are explained in Section \ref{sec:setting}. We give the main result on Harris recurrence of degenerate SDE's in Section \ref{sec:results}, its application to stochastic Hodgkin-Huxley systems in Sections \ref{sec:HH} and \ref{sec:limit theorems}. The proofs for degenerate SDE's are in Sections \ref{sec:harris} and \ref{sec:analyticharris}, the proofs for stochastic Hodgkin-Huxley models in Section \ref{sec:5}. Sections \ref{sec:control1}, \ref{sec:hoer}, \ref{sec:control} and \ref{sec:app} can be read independently: here we deal with weak Hoermander condition and we state the localized version of the control theorem, in a setting where the classical techniques apply only locally.

\section{Outline of results} 


\subsection{The setting and the assumptions} 
\label{sec:setting}

We start with an example in order to motivate our main assumptions.

\begin{ex}\label{ex:1}
Consider the two dimensional process $(X_t)_{t\ge 0}$ defined by 
\beqq\label{toyexample}
X=\left(\begin{array}{l} \xi \\ \psi \end{array}\right) \quad,\quad 
\left\{\begin{array}{lll} 
d\xi_t &= & -c\, \sin^2(2\pi t)\, \xi_t\, dt + dB_t \\ 
d\psi_t &= & (1-\psi_t)\, dt + \psi_t\, dB_t
\end{array}\right\} \;,
\eeqq
where $ (B_t)_{t \geq 0 } $ is one dimensional Brownian motion.  
We have an explicit representation of the solution:  
$\,\xi_t = \xi_0\, e^{ - c \int_0^t \sin^2(2\pi v) dv} + \int_0^t e^{ - c \int_r^t \sin^2(2\pi v) dv} dB_r\;$ for  $0<t<\infty$, and  
$\,\psi_t = ( \psi_0 + \int_0^t \Phi_r^{-1} dr )\,\Phi_t\,$ where  $\,\Phi_t := e^{-t} e^{B_t-\frac12 t}\,$ takes values in $(0,\infty)$. Notice that $\psi_0\ge 0$ implies $\psi_t>0$ for all $0<t<\infty$. Choosing $E:=\bbr{\times}[0,\infty)$ as state space for the process (\ref{toyexample}), $\,\partial(E)\cap E$ is an entrance boundary of $X.$  
\end{ex}

The structure of the state space described in Example \ref{ex:1} is typical for what we will consider in this paper. Our first standing assumption is

\begin{ass}\label{ass:1}
a) For some strictly increasing sequence $(G_m)_m$ of bounded convex open sets in $\bbr^d$ and  compacts $C_m := {\rm cl}(G_m)$, we have  
$\,E = \mathop{\bigcup}\limits_m C_m\,$ and $\,\cale:=\calb(E)\,$.\\
b) $\partial(E)\cap E$ is an entrance boundary for the process $X.$ \\
b') From positions $x\in C_m\setminus G_{m+1}$, almost surely, the process $X$ immediately enters $G_{m+1}$.
\\
c) Defining stopping times $T_m:= \inf\{ t > 0 : X_t\notin C_m\}$ for the process, we have $T_m\uparrow \infty$ as $m\to\infty$ almost surely, for every choice of a starting point $x\in E$. \\
d) The components of coefficients (\ref{coefficients}) for equation (\ref{sde}) 
$$ (t, x ) \to b^i  (t, x) , \; x \to \sigma^{i, j } (x) ,  i = 1, \ldots , d, j = 1 , \ldots , m$$
are $ C^\infty -$ functions on $ \bbr_+ \times U,$ for some open set $U \subset \bbr^d$ which contains $E$.
\end{ass}

The above assumption combines properties of the process (such as non-explosion or behavior at the boundary) with topological properties of $E$.  
It might hold for suitable processes (\ref{sde}) with $E$ open in $\bbr^d$ --thus $\partial(E)\cap E=\emptyset$ in b)-- for a compact exhaustion $(C_m)_m$ of $E$ in a), i.e.\ $C_m\subset G_{m+1}$ for all $m$. 
In case of Example 1, it holds with $G_m := ( - m , m ) \times ( 0, m ) $ and $E=\bbr\times[0,\infty)$  since the process (\ref{toyexample}) starting in $[-m,m]{\times}\{0\}=C_m\setminus G_{m+1}$ immediately enters $G_{m+1}$. 
Assumption \ref{ass:1} will be what we need for the stochastic Hodgkin-Huxley system of Section \ref{sec:HH}. It implies  
in particular that $(E,\cale)$ is a Polish space. The compacts $C_m={\rm cl}(G_m)$ are needed for various localization purposes.

We write $P_{s,t}(x,dy)$, $0\le s < t<\infty$, for the semigroup of transition probabilities on $(E,\cale)$. For $x\in E$ and $s\ge 0$, we write $Q_{(s,x)}$ for the law of the process (\ref{sde}) starting at time $s$ in $x$, a probability on the path space 
$\Om:=C([s,\infty),E)$ 
with the topology of locally uniform convergence; then the Borel $\si$-field $\cala$ is also generated by the coordinate projections, and $(\Om,\cala$) is again a Polish space. In case $s=0$ we write for short $Q_x:=Q_{(0,x)}$, $x\in E$. Finally, we also denote by $Q_x^{t_0}$ the law of the solution $ (X_t)_{0 \le t \le t_0} $ of  \eqref{sde}, for a finite time horizon $[0, t_0],$ starting from $X_0 = x ,$ on $C([0,t_0],E).$ On $(\Om,\cala)$, equipped with the canonical filtration $\mathbb{G}=(\calg_t)_{t\ge 0}$ and with shift operators $(\theta_t)_{t\ge 0}$, $X$ is simply the canonical process.

Our second standing assumption is time-periodicity of the drift together with existence of a Lyapunov function which constitutes a first step towards Harris property.

\begin{ass}\label{ass:2}
a) We take the drift $T$-periodic in the time variable: 
$$
b(t,x) \;=\; b( i_T(t) , x ) \quad,\quad\mbox{$i_T(t) := $ $t$ modulo $T$} \;. 
$$
b) We have a {\em Lyapunov function} $V:E\to [1,\infty)$, in the following sense: $V$ is $\cale$-measurable; there is a compact $K$ contained in $E$ (i.e., $K\subset E$ and $K\neq E$) and some $\vep>0$ such that 
$$
\mbox{$P_{0,T}V$ is bounded on $K$} \;,\quad 
P_{0,T}V \;\le\; V \;-\; \vep \quad\mbox{on $\,E\setminus K$} \;. 
$$ 
\end{ass}
\vskip0.5cm

By Assumption \ref{ass:2} a), the semigroup of the process (\ref{sde}) is $T$-periodic in time which means that
$$
P_{s,t}(x,dy) \;=\; P_{s+kT,t+kT}(x,dy) \;\;,\;\; k\in\bbn_0.   
$$
This implies that the {\em $T$-skeleton chain} $(X_{kT})_{k\in\bbn_0}$ is a time-homogeneous Markov chain. By Assumption \ref{ass:2} b),  
$\,(V(X_{kT}))_{k\in\bbn_0}\,$  
evolves as a nonnegative supermartingale as long as it stays outside $K$. As a consequence, the skeleton chain has to visit the compact $K$ infinitely often, almost surely, for arbitrary choice of a starting point in $E$. 
We ask the following question: which additional condition grants that the $T$-skeleton chain is recurrent in the sense of Harris? 

The $T$-periodicity of the semigroup has another important consequence: as in the proof of Proposition 5 in \cite{OU-HH}, we will have equivalence of the Harris property for the $d$-dim skeleton chain $(X_{kT})_{k\in\bbn_0}$ and of the Harris property for the $(1{+}d)$-dim continuous time process 
$$ 
\ov{X} = (\ov{X}_t)_{t\ge 0} \quad\mbox{defined by}\quad  \ov{X}_t := (i_T(t),X_t)   
$$ 
taking values in $\,\ov E := \bbt\times E\,$  where $\bbt := [0,T]$ is the torus, identifying $t$ with $i_T(t)$. $\,\ov E$ equipped with its Borel-$\si$-field $\ov \cale$ is a Polish space.  Time being included as a 'zero-component' into the continuous-time process, $\,\ov X$ is homogeneous in time. For the Harris property in discrete time see Harris \cite{Har-56} and Revuz \cite{Rev-84} p.\ 92, for the Harris property in continuous time see Az\'ema, Duflo and Revuz \cite{ADR-69}. The (unique up to constant multiples) invariant measures $\mu$ of $(X_{kT})_{k\in\bbn_0}$  and $\ov\mu$ of $\ov X$ will be related by  
\beqq\label{1-1-correspondence}
\ov{\mu}{(dt,dx)} \;=\; \frac1T \int_0^T ds\; [\epsilon_{ \{ s\} }\otimes \mu P_{0,s}]({ds},dx)  \;. 
\eeqq
By the form of the Lyapunov condition in Assumption \ref{ass:2}~b), recurrence will necessarily be positive recurrence: we thus will have $\mu(E)<\infty$, and up to choice of a norming factor $\mu$ will be a probability measure on $(E,\cale)$,  $E\subset \bbr^d$ (cf. Meyn and Tweedie \cite{MeyTwe-92}, Theorem 4.3).

Next we introduce a notion of `attainable points' for the process $X$. Note that this notion is entirely deterministic.  
Stratonovich drift $\wt b(\cdot,\cdot)$ is specified in (\ref{strato}).

\begin{defin}\label{def:1}
A point $x^*$ in $E$ is called {\em attainable in a sense of deterministic control} if it belongs to ${\rm int}(E)$ and if the following holds:  \\
for arbitrary $x\in E$, we can find some $\,\dot{\tt h}\,$ in $L^2_{\rm loc}$ (the class of $m$-dimensional measurable functions with components $\dot{\tt h}^{\ell}$ satisfying $\,\int_0^t \,[\dot{\tt h}^{\ell}(s)]^2\, ds < \infty\,$ for all $t<\infty$, $\ell=1,\ldots, m$) depending on $x$ and $x^*$ which drives the deterministic control system with Stratonovich drift $\wt b(\cdot,\cdot)$
$$
 d \vph (t) = \wt b ( t, \vph (t) ) dt + \si( \vph (t) ) \dot{\tt h}(t) dt, 
$$
from starting point $\vph(0)=x$ towards the limit  $\,x^* = \lim\limits_{t\to\infty}\vph (t)\,$,  
under the constraint $  \vph (t) \in E $ for all $ t \ge 0.$  
In this case we set $\vph:=\vph^{({\tt h}, x, x^*)}$.
\end{defin}

An illustration in the framework of Example \ref{ex:1} is given below.  We introduce our third key assumption, anticipating on Section \ref{sec:hoer} where we define `full weak Hoermander dimension'  (Definition \ref{def:hoerm2} in Section \ref{sec:hoer} does require some terminology, to be prepared there and to be read independently).

\begin{ass}\label{ass:3}
There is a point $\,x^*\in {\rm int}(E)\,$ with the following two properties: $\,x^*$ is of {\em full weak Hoer\-mander dimension} (Definition \ref{def:hoerm2} in Section \ref{sec:hoer} below), and  $\,x^*$ is {\em attainable in a sense of deterministic control}.  
\end{ass}

Notice that in the above assumption, it is sufficient to verify the weak Hoermander condition {\em at only one point} $ x^* \in {\rm int}(E).$  
This is easy for the process (\ref{toyexample}) of Example \ref{ex:1} (cf.\ end of this subsection and end of subsection \ref{sec:hoer}).  
For the stochastic Hodgkin-Huxley model of Section \ref{sec:HH}, we will be able to verify Assumption \ref{ass:3}.

At some point we will use a stronger version of Assumption \ref{ass:1} d) and suppose that the coefficients of \eqref{sde} are real analytic functions. This will allow to establish that weak Hoermander dimension remains constant along control paths as considered in Definition \ref{def:1} (which would not 
be true for $\calc^\infty$ coefficients as considered so far). 
We introduce our last assumption, specific to the analytic case.

\begin{ass}\label{ass:5}
The components of coefficients (\ref{coefficients}) for equation (\ref{sde}) 
$$
(t,x)\to b^i(t,x) \quad,\quad x\to \si^{i,j}(x) \quad,\quad i=1,\ldots,d \;,\; j=1,\ldots,m
$$
are {\em real analytic} functions on $\bbt{\times}U$, for some open set $U\subset\bbr^d$ which contains $E$. 
\end{ass}

{\bf Example \ref{ex:1} continued}\quad  
{\it Put $d=2$, $m=1$, and consider the 2-dim process (\ref{toyexample}) of Example \ref{ex:1} driven by 1-dim Brownian motion, with state space $E=\bbr\times[0,\infty)$. Here the point $x^*:= (0,\frac23)$ is attainable in a sense of deterministic control. To check this, write $t\to \wt\xi(t)$ for the first and  $t\to \wt\psi(t)$ for the second component of a deterministic control system $t\to\vph^{({\tt h},\cdot,\cdot)}(t)$ with $\dot h \in L^2_{\rm loc}$. Write $x=({\xi \atop \psi})$ for points in $E$, $\,\xi\in\bbr$, $\psi\in [0,\infty)$. Put $\,S(t):=\int_0^t c \sin^2(2\pi v) dv\,$ for $t\ge 0$. The process (\ref{toyexample}) has Stratonovich drift $\,\wt b(t,x) = ({ -c \sin^2(2\pi t)\, \xi \atop 1 - \frac32 \psi })$, from (\ref{strato}). 
We have to determine $\dot h \in L^2_{\rm loc}$ such that simultaneously for all choices of a starting value $\wt\xi(0)\in\bbr$,  
$$
\wt\xi(t) := \wt\xi(0) e^{-S(t)} + \int_0^t \dot h(v) e^{-(S(t)-S(v))} dv 
\quad\mbox{solution to}\quad 
\frac{d\wt\xi(t)}{dt} = - c\, \sin^2(2\pi t)\, \wt\xi(t)   +  \dot h(t)  
$$
converges to the limit value $0$, and such that for all choices of a starting value $\wt\psi(0) \in [0,\infty)$ the solution to 
$$
\frac{d\wt\psi(t)}{dt} = 1 - \frac32 \wt\psi(t) +  \wt\psi(t) \dot h(t)   
$$
converges to $\frac23$ as $t\to\infty$. The first requirement is satisfied whenever $\,\dot h\,$ is a smooth function on $[0,\infty)$ which decreases to $0$ as $t\to\infty$. If we choose in particular $\dot h(0):=1$ and $\dot h(t)\equiv 0$ for $t\ge t_0$, then 
$T(t):= \int_0^t ( \frac32 - \dot h(v) ) dv$ is after time $t_0$ a linear function, so as desired,
$$
\wt\psi(t) := \wt\psi(0) e^{-T(t)} + \int_0^t e^{-(T(t)-T(v))} dv  \;\;\lra\;\;  \int_0^\infty e^{- \frac32 y} dy =\frac23 \quad\mbox{as $\tto$.}
$$
}

\subsection{Main results}\label{sec:results}

Now we can state the main {results of our paper. They strengthen the results obtained in \cite{OU-HH}.

\begin{theo}\label{theo:1}
Under Assumptions \ref{ass:1}--\ref{ass:3} the following holds: 

a) The skeleton chain $(X_{kT})_{k\in\bbn_0}$ is positive Harris recurrent.  

b) The continuous-time process  $\ov X = \left( (i_T(t),X_t) \right)_{t\ge 0}$ is a positive Harris recurrent process. 
\end{theo}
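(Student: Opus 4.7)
The plan is to verify the two standard ingredients for positive Harris recurrence of the $T$-skeleton chain — a Foster--Lyapunov drift condition and the existence of a small set — and then lift the discrete-time result to the continuous-time process $\bar X$ via the bijective correspondence (\ref{1-1-correspondence}).

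First, by Assumption \ref{ass:2}(a) the semigroup is $T$-periodic, so $(X_{kT})_{k\in\bbn_0}$ is a time-homogeneous Markov chain on the Polish space $(E,\cale)$, and Assumption \ref{ass:2}(b) says exactly that its one-step kernel $P_{0,T}$ satisfies the Foster--Lyapunov drift inequality with test function $V$ and exceptional set $K$. So all that remains is to show that some compact set is small (or petite) for $P_{0,T}$; once this is combined with the drift condition, Meyn--Tweedie (\cite{MeyTwe-92}, Thm.~4.3) yields positive Harris recurrence.

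To produce such a small set, I would use the attainable point $x^*$ of Assumption \ref{ass:3} as the target. The strategy is in three steps. \textbf{(i) Reachability.} Fix any $x\in E$ and, by Definition \ref{def:1}, pick a control $\dot{\tt h}\in L^2_{\rm loc}$ such that the Stratonovich-controlled trajectory $\vph^{({\tt h},x,x^*)}$ stays in $E$ and converges to $x^*$. Truncate the control at a sufficiently large horizon $NT$ (for $N$ depending on $x$) so that $\vph^{({\tt h},x,x^*)}(NT)$ lies in a prescribed neighborhood of $x^*$. The localized support theorem (Theorem \ref{theo:4bis}) then implies that the law $Q_x^{NT}$ puts positive mass on an arbitrarily small tube around this trajectory in $C([0,NT],E)$; in particular $P_{0,T}^{N}(x,W)>0$ for every open neighborhood $W$ of $x^*$. \textbf{(ii) Smooth density at $x^*$.} Since $x^*\in{\rm int}(E)$ is of full weak Hoermander dimension, the local existence result of \cite{HH-TD} gives that, for suitable small times, the transition kernel admits a smooth density on a neighborhood $U$ of $x^*$, bounded below by some $\eta>0$ on a smaller neighborhood $U'\Subset U$. \textbf{(iii) Uniform lower bound.} Combining (i) and (ii) with the strong Markov property at an appropriate stopping time at which the process enters $U'$, one obtains, for each compact $C\subset E$, a positive integer $N_C$ and a constant $c_C>0$ such that
\[
P_{0,T}^{N_C}(x,A)\;\ge\; c_C\,\lambda(A\cap U') \qquad \text{for all } x\in C,\ A\in\cale ,
\]
where $\lambda$ is $d$-dimensional Lebesgue measure on $U'$. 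Specializing $C$ to the sublevel set $\{V\le R\}\cup K$ (which is compact once $V$ has compact sublevel sets — otherwise one enlarges $K$ within the class allowed by Assumption \ref{ass:1}~(a)), this identifies $C$ as a petite set for $P_{0,T}$.

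With the drift condition from Assumption \ref{ass:2}(b) and this petite set, the standard Meyn--Tweedie machinery delivers part (a): the skeleton chain $(X_{kT})$ is positive Harris with a unique invariant probability $\mu$. For part (b), note that $\bar X_t=(i_T(t),X_t)$ is time-homogeneous on $\bar E=\bbt\times E$ and that its law at integer multiples of $T$ is determined by $(X_{kT})$. Following the argument of Proposition 5 in \cite{OU-HH}, the Harris property of the discrete skeleton transfers to the continuous-time process, and the invariant measure $\bar\mu$ is given by (\ref{1-1-correspondence}).

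The genuinely delicate point is step (iii): the usual support-plus-H\"ormander reasoning is carried out in an open Euclidean set, whereas here $E$ may have an entrance boundary (Assumption \ref{ass:1}(b)) and the controls must satisfy the constraint $\vph(t)\in E$. This is precisely why the paper proves the \emph{localized} support theorem (Theorem \ref{theo:4bis}); I would expect the main technical work to lie in using it to chain together the control from an arbitrary starting point with the Hoermander-type density estimate at $x^*$ in a way that is uniform over compacts, including compacts that touch $\partial E\cap E$.
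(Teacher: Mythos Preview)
Your overall architecture is right and matches the paper: Foster--Lyapunov plus a petite set for the skeleton, then transfer to $\bar X$ via \cite{OU-HH}. But step~(iii) hides the real difficulty, and you have misidentified where it lies.

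The gap is the uniform $N_C$. From (i) you get, for each $x\in C$, an integer $N(x)$ with $P_{0,T}^{N(x)}(x,W)>0$; nothing in your argument promotes this to a single $N_C$ that works for all $x\in C$. Lower semicontinuity of $x\mapsto P_{0,jT}(x,U')$ (which is what the Hoermander estimate actually gives, cf.\ Lemma~\ref{lem:2}) lets you extend positivity from a point $x_0$ to a neighborhood of $x_0$, but only for that fixed $j$; covering $C$ by finitely many such neighborhoods produces finitely many different $j$'s, and there is no reason why $P_{0,jT}(x,U')>0$ should persist for larger $j$. So the ``single $N_C$'' claim does not follow from (i)+(ii)+strong Markov in the way you indicate. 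The boundary issue you flag at the end is secondary; the genuine obstruction is this $x$-dependence of the control time.

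The paper's device for this is to pass from the skeleton chain to the \emph{sampled chain} with kernel
\[
R(x,dy)\;=\;(1-p)\sum_{k\ge 1}p^{\,k-1}P_{0,kT}(x,dy),
\]
i.e.\ sample $(X_{kT})_k$ at an independent geometric time. Then Corollary~\ref{cor:2} gives $R(x,U^*)>0$ for \emph{every} $x\in E$ without any choice of $N$, and the lower semicontinuity of $x\mapsto R(x,U^*)$ (Corollary~\ref{cor:4bis}) yields a uniform positive lower bound on each compact. This is what produces Nummelin's minorization (Lemma~\ref{lem:1.5}) and makes the sampled chain a $T$-chain, so that every compact is petite (Meyn--Tweedie \cite{MeyTwe-92}, Thm.~3.2). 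Harris recurrence of the sampled chain is equivalent to that of the skeleton, and the Lyapunov condition then forces positive recurrence. Your sketch would be repaired by inserting this resolvent step; without it, (iii) is an assertion rather than an argument. A minor additional point: $V$ is only assumed $\cale$-measurable, so $\{V\le R\}$ need not be compact; the paper avoids this by showing every compact is petite rather than by specializing to sublevel sets.
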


As a consequence of Theorem \ref{theo:1} and positive recurrence, see e.g.\ \cite{ADR-69}, \cite{Rev-84}, we obtain 
strong laws of large numbers. Note that by H\"opfner and Kutoyants \cite{H-K}, Section 2, there is a third equivalence to assertions a) or b) of Theorem \ref{theo:1}, i.e.\ Harris recurrence of the $T$-segment chain $\left( (X_{kT+s})_{0\le s\le T} \right)_{k\in\bbn_0}$, taking values in $C([0,T], E)$, and with invariant measure as specified there. Actually, part b) of the following Corollary \ref{cor:1}  
is a strong law of large numbers for additive functionals of the $T$-segment chain, applying Theorem 2.1 of \cite{H-K}.

\begin{cor}\label{cor:1} 
a) Grant Assumptions \ref{ass:1}-- \ref{ass:3}  and consider functions $G:E\to\bbr$ which belong to $L^1(\mu)$, and functions $F:\ov E\to\bbr$ which belong to $L^1(\ov{\mu})$. Then we have 
$$
\frac1n \sum_{k=1}^n G\left( X_{kT} \right)  \;\;\lra\;\;  \int_{\bbr^d} \mu(dy)\, G(y) \quad\mbox{$Q_x$-almost surely as $\nto$}
$$
$$
\frac1t \int_0^t F\left( i_T(s) , X_s \right) ds \;\;\lra\;\; \frac1T \int_0^T ds \int_{\bbr^d} [\mu P_{0,s}](dy)\, F(s,y) 
\quad\mbox{$Q_x$-almost surely as $t\to\infty$}\\[2mm]
$$
for arbitrary choice of a starting point $x\in E$. 

b) The second assertion in a) can be extended to $ \sigma-$finite measures $\La(ds)$ on $(\bbr,\calb(\bbr))$ which are $T$-periodic i.e.\ $ \Lambda ( B ) = \Lambda ( B + k T ) $ for any $ B \in \calb ( \bbr) $ and $ k \in \bbz$, as follows: 
$$
\frac1t \int_0^t F\left( i_T(s) , X_s \right) \Lambda (ds) \;\;\lra\;\; \frac1T \int_0^T \Lambda (ds) \int_{\bbr^d} [\mu P_{0,s}](dy)\, F(s,y) 
\quad\mbox{$Q_x$-almost surely as $t\to\infty$}
$$
provided the mapping $[0,T] \ni s \to \mu P_{0,s}|F(s,\cdot)|$ belongs to $L^1(\La)$. 
\end{cor}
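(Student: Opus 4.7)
For part a), both assertions are standard laws of large numbers: the discrete-time SLLN for the positive Harris chain $(X_{kT})_k$ with invariant probability $\mu$ yields the first, while the continuous-time SLLN of \cite{ADR-69} applied to the Harris process $\ov X$ with invariant probability $\ov\mu$ given by \eqref{1-1-correspondence} yields the second. Both follow immediately from Theorem \ref{theo:1}, so the substance lies in part b).

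The plan for part b) is to lift the statement to a Ces\`aro average along the $T$-segment chain
$$
Y_k \;:=\; (X_{kT+s})_{0\le s\le T}\;,\quad k\in\bbn_0\;,
$$
taking values in $C([0,T],E)$. By the equivalence recorded in H\"opfner--Kutoyants \cite{H-K}, Section~2, Theorem \ref{theo:1} implies that $(Y_k)$ is positive Harris recurrent, with invariant probability $\nu$ equal to the law under $Q_\mu$ of $(X_s)_{0\le s\le T}$.

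Using $T$-periodicity of $\La$ (i.e.\ $\La(B{+}kT)=\La(B)$) together with $i_T(kT{+}v)=v$ for $0\le v<T$, the change of variables $u=v+kT$ gives for every $t=nT$ the decomposition
$$
\int_0^{nT} F\bigl(i_T(u),X_u\bigr)\,\La(du) \;=\; \sum_{k=0}^{n-1} \int_0^T F(v,X_{kT+v})\,\La(dv) \;=\; \sum_{k=0}^{n-1} G(Y_k) \;,
$$
where $G:C([0,T],E)\to\bbr$ is the path functional $G(\gamma):=\int_0^T F(v,\gamma(v))\,\La(dv)$. Fubini, justified by the hypothesis on $|F|$, yields $\int|G|\,d\nu \le \int_0^T \La(dv)\,[\mu P_{0,v}](|F(v,\cdot)|) < \infty$, and without absolute values identifies $\int G\,d\nu$ with $T$ times the right-hand side of the target identity. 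The SLLN for positive Harris chains applied to $(Y_k)$ then delivers
$$
\frac{1}{n}\sum_{k=0}^{n-1} G(Y_k) \;\lra\; \int G\,d\nu \quad Q_x\text{-a.s.}
$$
for any starting point $x\in E$.

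To pass from $t=nT$ to general $t$, I would write $t = n(t)T + r(t)$ with $0\le r(t)<T$. Since $n(t)/t\to 1/T$, the main term converges to $\frac{1}{T}\int G\,d\nu$, which is exactly the claimed limit. The boundary remainder is bounded in absolute value by $\frac{1}{n(t)T}\wt G(Y_{n(t)})$, where $\wt G(\gamma):=\int_0^T|F(v,\gamma(v))|\La(dv)\in L^1(\nu)$; since $\frac{1}{n}\sum_{k=0}^{n-1}\wt G(Y_k)$ converges $Q_x$-a.s., the increment $\wt G(Y_n)/n\to 0$ and the remainder vanishes. The main obstacle is ensuring that $G$ is well-defined $\nu$-a.s.\ and $\nu$-integrable when the periodic $\sigma$-finite measure $\La$ assigns infinite mass to $[0,T]$; this is precisely what the hypothesis $s\mapsto\mu P_{0,s}|F(s,\cdot)|\in L^1(\La)$ provides, via Fubini.
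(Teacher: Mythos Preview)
Your proposal is correct and follows essentially the same approach as the paper: the paper does not give a detailed proof of Corollary \ref{cor:1} but simply observes that part a) is the standard SLLN for positive Harris chains/processes (citing \cite{ADR-69}, \cite{Rev-84}) and that part b) is a strong law of large numbers for additive functionals of the $T$-segment chain obtained by applying Theorem 2.1 of \cite{H-K}. Your argument is precisely a spelled-out version of that segment-chain reduction, including the identification of the limit via Fubini and the handling of the boundary remainder.
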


In the analytic case, we obtain additionally to Theorem \ref{theo:1} and Corollary \ref{cor:1}:

\begin{prop}\label{cor:1bis}
Under Assumptions \ref{ass:1}--\ref{ass:5} the following statements hold true.\\ 
a) The weak Hoermander condition holds on the full state space~$E$. \\
b) The process (\ref{sde}) is a strong Feller process. \\ 
c) The invariant probability $\mu(dy)$ on $E$ and the projection 
$ 
\frac1T\int_0^T ds\, [\mu P_{0,s}](dy)
$   
of $\,\ov\mu(ds,dy)$ on its second component $y\in E$ admit Lebesgue densities.  
\end{prop}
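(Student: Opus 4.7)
The plan is to treat part (a) as the substantive step and then derive (b) and (c) as standard consequences of Hörmander's theorem once the weak Hörmander condition has been upgraded from the single point $x^*$ to all of $E$.

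For part (a), I would fix an arbitrary $x\in E$ and pick a control path $\vph=\vph^{({\tt h}, x, x^*)}$ provided by Assumption \ref{ass:3} / Definition \ref{def:1}, so that $\vph(0)=x$ and $\vph(t)\to x^*$ as $\tto$. The key input --- the point that Section \ref{sec:hoer} is designed to supply in the analytic setting of Assumption \ref{ass:5} --- is that the weak Hörmander dimension is \emph{constant along such control paths}, a Sussmann/Nagano-type orbit property that genuinely requires analyticity and fails for merely $C^\infty$ coefficients. Fullness of the Hörmander dimension is an open condition (the failure locus being cut out by the vanishing of determinants of collections of iterated brackets, which are analytic functions on $U$), so fullness at $x^*$ extends to an open neighborhood $U^*$ of $x^*$. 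Since $\vph(t)\in U^*$ for $t$ sufficiently large, constancy along $\vph$ transfers fullness to every point of the path, in particular back to $\vph(0)=x$. As $x$ was arbitrary, weak Hörmander holds on all of $E$.

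For part (b), combining (a) with Hörmander's theorem (the local form already invoked through \cite{HH-TD}) produces, for every $0\le s<t$, a $C^\infty$ density $p_{s,t}(x,y)$ of $P_{s,t}(x,dy)$ with respect to Lebesgue measure on $\bbr^d$. Then
\beqq
P_{s,t}f(x) \;=\; \int_E p_{s,t}(x,y)\, f(y)\, dy
\eeqq
is continuous in $x$ by dominated convergence for any bounded measurable $f$, which gives the strong Feller property of $X$ and hence of $\ov X$. For (c), the fixed-point identity $\mu=\mu P_{0,T}$ together with Fubini immediately produces the Lebesgue density $y\mapsto \int_E p_{0,T}(x,y)\, \mu(dx)$ of $\mu$; the same computation carried out inside the $ds$-integral, using the densities of $\mu P_{0,s}$ for $s\in(0,T]$, yields a Lebesgue density for the projection $\frac1T\int_0^T [\mu P_{0,s}](dy)\,ds$.

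The hard part will clearly be (a), specifically the propagation-of-dimension argument along $\vph$: constancy of the Lie algebra rank along orbits is precisely where analyticity is essential, and one must also check carefully that the portion of $\vph$ inside $U^*$ is actually connected to $x$ through the orbit structure so that the transfer back to $\vph(0)$ is legitimate. Once (a) is in hand, (b) and (c) reduce to bookkeeping with Hörmander regularity, Fubini, and the invariance equations for $\mu$ and $\ov\mu$.
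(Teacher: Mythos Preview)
Your proposal is correct and follows the paper's route: part (a) is exactly Theorem \ref{theo:6}, proved via Sussmann's orbit theorem for analytic vector fields (Lemma \ref{lem:1}), and parts (b)--(c) are the consequences drawn in Lemma \ref{lem:2bis} and the two-line proof of Proposition \ref{cor:1bis}. Two execution details differ from your sketch: Sussmann's constancy applies to the full Lie algebra $\Lambda={\rm LA}(V_0,V_1,\ldots,V_m)$ rather than directly to the weak Hoermander span $\call$, so the paper passes through the identity $\dim(\Lambda)=1+\dim(\call)$ from Proposition \ref{prop:4} (and restricts to admissible, i.e.\ piecewise-constant, controls to stay within Sussmann's framework); and for strong Feller the paper does not invoke dominated convergence but instead uses lower semicontinuity of $x\mapsto P_{0,t}(x,A)$ together with the Ichihara--Kunita trick (apply the argument to both $f$ and $\|f\|_\infty-f$).
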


Theorem \ref{theo:1} will be proved in Section \ref{sec:harris} below, Proposition \ref{cor:1bis} in Section \ref{sec:analyticharris}.

\subsection{Application: stochastic Hodgkin-Huxley systems where dendritic input is the only source of randomness}\label{sec:HH} 

The classical deterministic 4d Hodgkin-Huxley model for a spiking neuron (Hodgkin and Huxley \cite{HodHux-51}) consists of four variables: the voltage $v$ taking values in $\bbr$ which can be measured by introducing an electrode into the soma of the neuron, and three gating variables $\,n$, $m$, $h\,$ taking values in $[0,1]$ which represent the probabilities that `guardians' of certain types open certain types of ion channels. A deterministic function represents input as a function of time (e.g., a sequence of pulses). A modern introduction to the biological background can be found in the book by Izhikevich \cite{Izh-07}. We address here a stochastic Hodgkin-Huxley model including dendritic input, the latter being the only source of "noise". Practically we add as in \cite{HH-TD} and \cite{OU-HH} an autonomous stochastic differential equation as fifth component to the 4d model; its increments take the place of classical input terms and thus act on the membrane potential. Hence our stochastic Hodgkin-Huxley model is a 5d system 
$$
X=(X_t)_{t\ge 0} \quad\mbox{where $X_t$ has components $v_t$, $n_t$, $m_t$, $h_t$, $\xi_t$,} 
$$ 
driven by 1d Brownian motion $W=(W_t)_{t\ge 0}$ as follows:  
\beqq\label{xiHH_general}
\left\{\begin{array}{lll}
dv_t &= &d\xi_t \;-\; F(v_t, n_t, m_t, h_t)\, dt \\
dj_t &= &[\,\al_j(v_t)(1-j_t) \;-\; \beta_j(v_t)\, j_t\,]\, dt \quad,\quad j\in\{n,m,h\}   \\
d\xi_t &= &\beta(t,\xi_t)\, dt \;+\; q(\xi_t)\, dW_t. 
\end{array}\right.
\eeqq 
The mapping $(t,y)\mapsto\beta(t,y)$ is $T$-periodic in the time variable, for all $y$. Some $T$-periodic deterministic signal $t\to S(t)$ is coded in $\beta(t,y)$ and hence in the semigroup of $(X_t)_{t\ge 0}$. The mapping $y\mapsto q(y)$ is a 1d volatility, strictly positive on the interval where $(\xi_t)_{t\ge 0}$ takes its values. We shall specify $\beta(\cdot,\cdot)$ and $q(\cdot)$ below in two different --biologically relevant-- ways.

Without any change with respect to the deterministic Hodgkin-Huxley model, the function  
\beqq\label{function_F}
F(v,n,m,h) \;:=\; 36\, n^4\, (v+12) \;+\; 120\, m^3\, h\, (v-120) \;+\; 0.3\, (v-10.6)  
\eeqq 
in (\ref{xiHH_general}) is a power series in the four variables $v$, $n$, $m$, $h$. The mappings $v\to \al_j(v)>0$ and $v\to \beta_j(v)>0$ are real analytic having domain $\bbr$, the index $j$ representing any of the variables $n$ or $m$ or $h$ (see \cite{Izh-07} pp.\ 37--38 for explicit expressions --those which we have used in \cite{HH-TD}, \cite{OU-HH}-- and biological context). Since $\al_j(v)$ and $\beta_j(v)$ are strictly positive, solutions $\,t \to n_t \,,\, m_t \,,\, h_t\,$ starting in $[0,1]$ will immediately enter the open interval $(0,1)$ and remain there for all $0<t<\infty$.  This property allows  to construct  the state space $E$ such that parts b) and b') of Assumption \ref{ass:1} do hold.

Deterministic Hodgkin-Huxley systems with constant input $c\in\bbr$   
suppress the last equation from (\ref{xiHH_general}) and write the first four equations as 
\beqq\label{HH_det}
\left\{\begin{array}{lll}
dv_t &= &c\, dt \;-\; F(v_t, n_t, m_t, h_t)\, dt \\
dj_t &= &[\,\al_j(v_t)(1-j_t) \;-\; \beta_j(v_t)\, j_t\,]\, dt \quad,\quad j\in\{n,m,h\} \;. 
\end{array}\right.
\eeqq
In 4d systems (\ref{HH_det}), 
equilibria for the gating variables $n$, $m$, $h$ whenever $v$ is kept constant   
\beqq\label{nmhinfini}
j_\infty(v) := \frac{\al_j}{\al_j+\beta_j}(v)  \quad,\quad j\in\{n,m,h\}
\eeqq 
exist, and the mapping $F_\infty$ defined by 
\beqq\label{F_infinity}
v \;\lra\; F_\infty(v) \;:=\; F\left(\, v \,,\, n_\infty(v) \,,\, m_\infty(v) \,,\, h_\infty(v) \,\right)  
\eeqq 
turns out to be strictly increasing from $\bbr$ onto itself.  
Defining $v^c\in\bbr$ as the solution of  $F_\infty(v^c)\;=\; c$, we dispose of a 1-1-correspondence between values $c\in\bbr$ of constant input 
and values $v^c\in\bbr$ of constant voltage such that 
\beqq\label{equilibrium-1}
\left(\, v^c \,,\, n_\infty(v^c) \,,\, m_\infty(v^c) \,,\, h_\infty(v^c) \,\right)
\eeqq
is an equilibrium for the system (\ref{HH_det}). Equilibria (\ref{equilibrium-1}) are stable or unstable depending on whether $c$ is below or above some critical value~$c^*$. Existence of this critical value has been proved by Rinzel and Miller \cite{RinMil-80}, for the original model constants of Hodgkin and Huxley \cite{HodHux-51} which are slightly different from Izhikevich \cite{Izh-07}.   
A numerical verification under the model constants of \cite{Izh-07} is given in Endler (\cite{End-12}, fig.\ 2.6 on p.\ 28); it locates the critical value at $c^*\approx 5.265$.  
In particular, the equilibrium point 
\beqq\label{equilibrium-2}
\left(\, v^0 \,,\, n_\infty(v^0) \,,\, m_\infty(v^0) \,,\, h_\infty(v^0) \,\right)
\eeqq
corresponding to $c=0$ is stable. Its value computed numerically is $v^0 \approx 0.0462$.

We shall specify the fifth component $(\xi_t)$ in (\ref{xiHH_general})  
in two ways: a CIR-type modelization and an OU-type one. We prove that in both cases all assumptions needed for Theorem \ref{theo:1} are satisfied. Both are biologically relevant: in 1d integrate-and-fire neuronal models, rescaled CIR or OU diffusions have been used for a long time to model the membrane potential, see \cite{LanSacTom-95} and the references therein. In contrast to this we use CIR or OU as a model for the dendritic input.

CIR-HH will denote the 5d system (\ref{xiHH_general}) for which the 5th equation 
takes the form 
\beqq\label{defCIR}
\left\{\begin{array}{l}
d\xi_t \;=\; [\,a + S(t) - \xi_t\,]\, dt \;+\; \sqrt{\xi_t\,}\, dW_t \;\;,\;\; t\ge 0 \;\;,\;\; \xi_0=\zeta>0  \\
2a\;>\; 1 \\
\mbox{$t\to S(t)$ nonnegative, real analytic and $T$-periodic} \;.  
\end{array}\right.
\eeqq

Assuming $2a\ge 1$ and $S(\cdot)\ge 0$, the process $(\xi_t)_{t\ge 0}$ 
starting from $\zeta>0$ almost surely never attains~$0$, and thus takes its values in the open half-axis $(0,\infty)$ on which the function $\,y\to \sqrt{y\,}$ is analytic. We shall consider the 5d CIR-HH process $X=(X_t)_{t\ge 0}$ defined by (\ref{xiHH_general})+(\ref{defCIR}) on the state space 
$$
E := \bbr \times [0,1]^3 \times (0,\infty) = \bigcup_m C_m \quad,\quad C_m={\rm cl}(G_m) 
\;\;\mbox{with}\;\; G_m := (-m , m)\times (0,1)^3 \times (\frac1m , m) \;. 
$$ 
Since $\al_j(\cdot)$ and $\beta_j(\cdot)$ in (\ref{xiHH_general}) are strictly positive,  the process $X$  starting in $C_m\setminus G_{m+1}$ enters $G_{m+1}$ immediately.  Thus Assumption \ref{ass:1} does hold; we can take $\,U := \bbr{\times}\bbr^3{\times}(0,\infty)\,$.

\begin{prop}\label{prop:1}
Assuming that $ 2 a > 1 ,$ Assumptions \ref{ass:2}, \ref{ass:3} and \ref{ass:5} are satisfied for CIR-HH with 
\beqq\label{CIRcandidate}
x^* \;=\;  \left(\, v^* ,  n^* , m^* , h^* , 1 \,\right) 
\;:=\; \left(\, v^0 \,,\, n_\infty(v^0) \,,\, m_\infty(v^0) \,,\, h_\infty(v^0)  \,,\, 1 \,\right) \;. 
\eeqq
\end{prop}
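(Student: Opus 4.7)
My plan is to verify Assumptions \ref{ass:2}, \ref{ass:3} and \ref{ass:5} in turn, reducing each to properties of the scalar CIR subsystem together with the classical 4d Hodgkin--Huxley system with constant input. Assumption \ref{ass:5} (analyticity) is essentially immediate: the $v$- and $j$-drifts in (\ref{xiHH_general}) are polynomial in $v,n,m,h$ composed with the real analytic $\al_j, \beta_j$, while $y\mapsto\sqrt{y}$ is real analytic on $(0,\infty)$; the coefficients therefore extend analytically to $\bbt\times U$ with $U:=\bbr\times\bbr^3\times(0,\infty)\supset E$.

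For attainability of $x^*$ in the sense of Definition \ref{def:1}, first observe that $x^*\in{\rm int}(E)$, since $j_\infty(v^0)\in(0,1)$ for $j\in\{n,m,h\}$ and the last coordinate equals $1\in(0,\infty)$. The noise vector field is $\si(x)=\sqrt{\xi}(\partial_v+\partial_\xi)$, so a scalar control $\dot{\tt h}$ shifts $v$ and $\xi$ by the same amount. I would proceed in two phases. On an initial interval $[0,t_1]$, drive $\xi$ from $\xi_0>0$ to $1$ along an arbitrary path staying in $(0,\infty)$ and, exploiting the simultaneous action on $v$, maneuver $(v,\xi)$ into a prescribed open neighbourhood of $(v^0,1)$. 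On $[t_1,\infty)$, pick $\dot{\tt h}(t)$ so as to cancel the Stratonovich $\xi$-drift exactly, keeping $\vph_5\equiv 1$; under such a control the first four coordinates then obey (\ref{HH_det}) with $c=0$. Since $c=0$ lies strictly below the critical value $c^*\approx 5.265$, phase-plane analysis in the spirit of \cite{RinMil-80} and \cite{End-12} shows that (\ref{equilibrium-2}) is a locally attracting stable equilibrium, and the Phase-1 prepositioning can be chosen inside its basin, yielding $(\vph_1,\dots,\vph_4)(t)\to(v^0,n^*,m^*,h^*)$ as $t\to\infty$.

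For the weak Hoermander property at $x^*$, I would produce five linearly independent directions via iterated Lie brackets of $\si$ with the time-extended drift $\partial_t+b$. The field $\si$ itself supplies the direction $\partial_v+\partial_\xi$. Bracketing with $\partial_t+b$ separates the $v$- and $\xi$-components, thanks to the asymmetric $-F(v,n,m,h)$ present only in the $v$-drift, producing a vector whose $\partial_v$- and $\partial_\xi$-coefficients differ. A further bracket pulls in $\partial_n,\partial_m,\partial_h$ through the $v$-dependent gating drifts $\al_j(v)(1-j)-\beta_j(v)j$, provided that at $x^*$ the triple $\bigl(\al_j'(v^0)(1-j^*)-\beta_j'(v^0)j^*\bigr)_{j\in\{n,m,h\}}$ is non-degenerate, which can be checked from the explicit Izhikevich formulas. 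Four iterated brackets should suffice, reducing the verification to showing an explicit $5\times 5$ matrix invertible at $x^*$.

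The remaining task is Assumption \ref{ass:2} (Lyapunov). The gating coordinates already live in $[0,1]^3$ and need no treatment. Writing $F(v,n,m,h)=\gamma(n,m,h)\,v+R(n,m,h)$ with $\gamma(n,m,h)=36n^4+120m^3h+0.3\ge 0.3$ and $R$ bounded, my candidate is $V(x):=1+v^2+\xi^2+\xi^{-\delta}$ with $0<\delta<2a-1$. The $v^2$-generator contributes a dissipation $-2\gamma(n,m,h)v^2$, the $\xi^2$-generator dissipation $-2\xi^2$, and the $\xi^{-\delta}$-generator dissipation of order $-\xi^{-\delta}$ near $\xi=0$ (here one uses $2a>1$); the cross-term $-2v\xi$ is absorbed via $2|v|\xi\le\vep v^2+\xi^2/\vep$ with some $\vep\in(1/4,0.3)$, preserving strict negativity of the infinitesimal generator on $E$ off a compact set. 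Integrating from $0$ to $T$ and using uniform exponential-moment bounds for the CIR family should then give the Foster--Lyapunov inequality $P_{0,T}V\le V-\vep'$ outside a compact sublevel set $K\subset E$, as required. The main obstacle I expect is this last step, since one must simultaneously control $v\to\pm\infty$, $\xi\to 0$ and $\xi\to\infty$ under the time-periodic forcing $S(t)$, and the dissipation rate of $v$ itself depends on $(n,m,h)$; the Hoermander verification, by contrast, is conceptually routine though algebraically heavy.
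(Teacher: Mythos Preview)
Your treatment of Assumption~\ref{ass:5} and of the Lyapunov condition is close in spirit to the paper's. The paper takes $V(x)=1+\log^2(x^5)+(x^5)^2+\psi(x^1)$ with $\psi(y)\sim|y|$ at infinity rather than your $v^2+\xi^{-\delta}$, but the mechanism is the same: separate CIR dissipation for $\xi$ in both directions ($\xi\to 0$ and $\xi\to\infty$), combined with the back-driving of $F$ in the $v$-variable, leading to $\call_t V\le -c_1V+c_2$ and then to $P_{0,T}V\le\la V+\delta$. (Your interval $(1/4,0.3)$ for $\vep$ is off: absorbing $2|v|\xi$ requires $\vep>1/2$ on the $\xi^2$ side and $\vep<2\gamma_{\min}=0.6$ on the $v^2$ side, but a valid window does exist.) The Hoermander check at $x^*$ is handled in the paper by citing \cite{HH-TD}; your sketch is along the right lines.

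The genuine gap is in the attainability argument. Your two-phase scheme is precisely the ``intuitive'' control that the paper itself writes down first and then \emph{rejects} as non-rigorous. Two issues. First, ``the Phase-1 prepositioning can be chosen inside its basin'' is unjustified: the basin of attraction of $(v^0,n^*,m^*,h^*)$ for the deterministic HH with $c=0$ is a set in $(v,n,m,h)$-space, while your Phase~1 only arranges for $(v,\xi)$ to lie near $(v^0,1)$ and says nothing about $(n,m,h)$. The paper is explicit that one cannot simply assume that every $(v,n,m,h)\in\bbr\times[0,1]^3$ lies in this basin. Second, the constraint $\vph^5>0$ throughout is never addressed. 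Since $\si(x)=\sqrt{\xi}(\partial_v+\partial_\xi)$, the control cannot move $v$ and $\xi$ independently; in Stratonovich form $d(\vph^1-\vph^5)/dt=-F(\vph^1,\ldots,\vph^4)$. Hence the only obvious way to force $(n,m,h)$ towards $(n^*,m^*,h^*)$---holding $\vph^1$ at $v^0$ and letting the gating variables relax---makes $\vph^5$ drift by $\int F\,ds$, with nothing preventing it from hitting zero. The paper's actual proof builds a six-part control to handle exactly these points: (i) force $\vph^1$ into $(-12,120)$ with $\vph^5$ frozen, (ii) pump $\vph^5$ above a large threshold $K(f{+}1)$ so that it survives the subsequent transient, (iii) freeze $\vph^1$ at $v^0$ and let $(n,m,h)$ relax, using the identity $F_\infty(v^0)=0$ together with the exponential bound (\ref{eq:lambdabis}) to show $\vph^5$ stays above $1$, (iv) perturb $\vph^1$ slightly to $v^0-10^{-k}$ so that $F_\infty<0$ and $\vph^5$ decreases to $1$, and only then (v) invoke \emph{local} stability of the equilibrium. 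Your Phase~1 does none of this preparatory work.
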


The proof for Proposition \ref{prop:1} is given in Section \ref{sec:5} below.  
We do not insist on scaling or on constants which are of interest for biologists (\cite{LanSacTom-95},  
\cite{DitLan-06}, \cite{Hoe-07}): in view of proving ergodicity of the system, this does not make any difference, so we simplify and work with the form (\ref{defCIR}) above.

OU-HH will denote the system (\ref{xiHH_general}) for which the 5th equation takes the form 
\beqq\label{defOU}
d\xi_t \;=\; (\,S(t)-\xi_t\,)\,  dt \;+\;  dW_t  \quad,\quad t\ge 0 \;. 
\eeqq 
We have considered in \cite{OU-HH} a parametrized equation in view of biologically realistic modelization; we simplify here to (\ref{defOU}) for the reason given above.  
We could prove in \cite{OU-HH} that OU-HH admits a finite number of Harris sets, and in restriction to every Harris set is positive Harris recurrent with an "explicit" invariant measure. In the present paper we are able to show that in fact a stronger assertion holds true: OU-HH  admits a unique Harris set, as a consequence of Theorem~\ref{theo:1} above. 
To show this we have to check the assumptions of Theorem~\ref{theo:1}.  
We consider the 5d process $X=(X_t)_{t\ge 0}$ defined by (\ref{xiHH_general})+(\ref{defOU}) on the state space  
$$
E \;:=\; \bbr \times [0,1]^3 \times \bbr \;=\; \bigcup\limits_m C_m \quad,\quad C_m={\rm cl}(G_m) \;\;\mbox{with}\;\; G_m := (-m,m)\times (0,1)^3 \times (-m,m) \;. 
$$
By the same argument as above, Assumption \ref{ass:1} holds, with $U:=\bbr \times \bbr^3\times \bbr$.

\begin{prop}\label{prop:2}
Assumptions \ref{ass:2}, \ref{ass:3} and \ref{ass:5} are satisfied for OU-HH with 
\beqq\label{OU-HH-equilibrium}
x^* \;=\;  \left(\, v^* ,  n^* , m^* , h^* , 0 \,\right) 
\;:=\; \left(\, v^0 \,,\, n_\infty(v^0) \,,\, m_\infty(v^0) \,,\, h_\infty(v^0)  \,,\, 0 \,\right) \;. 
\eeqq
\end{prop}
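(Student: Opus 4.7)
The plan is to verify Assumptions \ref{ass:5}, \ref{ass:2}, and \ref{ass:3} in turn; Assumption \ref{ass:5} is immediate and Assumption \ref{ass:2}(a) is built in, so the substantive work lies in the Lyapunov construction and in the deterministic control toward $x^{*}$. Analyticity holds because the coefficients of \eqref{xiHH_general}+\eqref{defOU} are built from the real analytic functions $\alpha_{j},\beta_{j}$ on $\bbr$, the polynomial $F$, the analytic $T$-periodic signal $S$, and the linear term $-\xi$; the diffusion $\sigma=(1,0,0,0,1)^{T}$ is constant. Periodicity of $b$ follows from that of $S$. Since $j_{\infty}(v^{0})\in(0,1)$ for $j\in\{n,m,h\}$, the candidate $x^{*}$ lies in $\mathrm{int}(E)=\bbr\times(0,1)^{3}\times\bbr$, as required by Assumption \ref{ass:3}.

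For the Lyapunov function (Assumption \ref{ass:2}(b)), the gating variables stay in $[0,1]^{3}$ and so do not contribute; I would take $V(v,n,m,h,\xi):=1+\mu v^{2}+\lambda\xi^{2}$ with suitably chosen weights. The OU equation for $\xi$ gives the standard one-period contraction $P_{0,T}(\xi^{2})\le e^{-2T}\xi^{2}+C_{1}$ uniformly in the other coordinates. For $v$, the HH drift $-F(v,n,m,h)$ has derivative $\partial_{v}F=36 n^{4}+120 m^{3} h + 0.3\ge 0.3$ throughout $E$, so integrated over one period this produces a contraction of $v^{2}$ toward a bounded interval; the coupling $-\xi_{t}$ appearing in $dv_{t}$ is handled by a Young inequality against the $\xi$-bound above, fixing $\lambda$ large enough against $\mu$. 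The resulting bound $P_{0,T}V\le V-\varepsilon$ outside a large compact $K\subset E$, together with boundedness of $P_{0,T}V$ on $K$, follows from standard moment estimates; this adapts the construction of \cite{OU-HH}.

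For the weak Hoermander condition at $x^{*}$, since $\sigma$ is constant the Ito and Stratonovich drifts coincide, and the single diffusion vector field is $X_{1}=\partial_{v}+\partial_{\xi}$ (constant). With $X_{0}=\partial_{t}+\sum_{i}b^{i}\partial_{x^{i}}$ one obtains $[X_{0},X_{1}]=-(\partial_{v}+\partial_{\xi})X_{0}$, whose spatial coordinates at $x^{*}$ read $(\partial_{v}F(x^{*})+1,\,-\alpha_{n}'(v^{0})(1-n^{*})+\beta_{n}'(v^{0})n^{*},\,\ldots,\,1)$. Iterated brackets $V_{k}:=\mathrm{ad}_{X_{1}}^{k-1}[X_{0},X_{1}]$ for $k\ge 2$ have vanishing $v$- and $\xi$-components (since $F$ is linear in $v$ and the $\xi$-entry is constant) and gating components of the form $-\alpha_{j}^{(k)}(v^{0})(1-j^{*})+\beta_{j}^{(k)}(v^{0})j^{*}$. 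Because $\partial_{v}F(x^{*})=36(n^{*})^{4}+120(m^{*})^{3}h^{*}+0.3>0$, the pair $\{X_{1},[X_{0},X_{1}]\}$ already spans the $(v,\xi)$-plane, while the three gating directions are spanned by $\{V_{k}\}_{k\ge 1}$ through non-vanishing of a Wronskian-type determinant of higher-order derivatives of $\alpha_{j},\beta_{j}$ at $v^{0}$, mirroring the computation in \cite{HH-TD}.

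It remains to drive an arbitrary $y=(v_{0},n_{0},m_{0},h_{0},\xi_{0})\in E$ toward $x^{*}$ by deterministic control. The controlled system reads
\[
\dot v = S(t)-\xi-F(v,n,m,h)+\dot{\tt h},\quad \dot j=\alpha_{j}(v)(1-j)-\beta_{j}(v)j,\quad \dot\xi = S(t)-\xi+\dot{\tt h},
\]
and the key identity $\dot v-\dot\xi=-F(v,n,m,h)$ is independent of $\dot{\tt h}$; prescribing a smooth path $v(\cdot)$ with $v(0)=v_{0}$ therefore forces
\[
\xi(t) \;=\; \xi_{0}+(v(t)-v_{0})+\int_{0}^{t}F(v(s),n(s),m(s),h(s))\,ds,
\]
with the control then recovered as $\dot{\tt h}(t)=\dot v(t)-S(t)+\xi(t)+F(v(t),n(t),m(t),h(t))$. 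Requiring $v(t)\to v^{0}$ and $\xi(t)\to 0$ reduces the design of $v(\cdot)$ to the single integral constraint $\int_{0}^{\infty}F(v(s),n(s),m(s),h(s))\,ds = v_{0}-\xi_{0}-v^{0}$. The plan is: on $[T_{1},\infty)$ take $v\equiv v^{0}$, so the gating variables relax exponentially to $(n^{*},m^{*},h^{*})$ and the integrand tends to $F_{\infty}(v^{0})=0$, making the tail integral finite and small; on $[0,T_{1}]$, deform a smooth curve from $v_{0}$ to $v^{0}$ through a one-parameter family of bumps to match the required integral value by the intermediate value theorem. The trajectory stays in $E$ because $v,\xi$ are unconstrained and the gating dynamics keep $(n,m,h)\in[0,1]^{3}$. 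The main obstacle is this last step: verifying that the family of bumps sweeps continuously through all real values of the integral and yields $\dot{\tt h}\in L^{2}_{\mathrm{loc}}$; the weak Hoermander and Lyapunov arguments follow the lines of \cite{HH-TD} and \cite{OU-HH} respectively.
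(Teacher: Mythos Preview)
Your verification of Assumption~\ref{ass:5}, of periodicity, of the weak Hoermander condition at $x^{*}$, and your Lyapunov sketch all run parallel to the paper, which in its own proof simply points to \cite{OU-HH} for the Lyapunov function and to \cite{HH-TD} for the Hoermander computation, then says the control for attainability is done ``in analogy'' to the CIR-HH proof (Proposition~\ref{prop:1}), simplified because $\vph^{5}$ is unconstrained and no division by $\sqrt{\vph^{5}}$ occurs.

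The genuine difference is in the control construction. The paper does not try to hit your integral constraint $\int_{0}^{\infty}F\,ds = v_{0}-\xi_{0}-v^{0}$ in one shot via an IVT over a family of bumps. It proceeds in explicit phases: first drive $\vph^{1}$ into $(-12,120)$; then prescribe $\vph^{1}$ along a smooth curve to $v^{0}$ and hold it there, reading off $\dot{\tt h}$ from the fifth equation. The identity you isolate, $\dot\vph^{1}-\dot\vph^{5}=-F$, is used here too: with $\vph^{1}\equiv v^{0}$ the gating variables relax exponentially to $(n^{*},m^{*},h^{*})$ and $F_{\infty}(v^{0})=0$ forces $\vph^{5}$ to a finite limit $\vph^{5}(\infty)$. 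The correction of $\vph^{5}$ to the target $0$ is then made explicitly rather than by IVT over bump shapes: perturb $\vph^{1}$ to $v^{**}=v^{0}\pm 10^{-k}$ so that, by strict monotonicity of $F_{\infty}$ with $F_{\infty}(v^{0})=0$, one has $F_{\infty}(v^{**})$ of the desired sign; then $\dot\vph^{5}\to F_{\infty}(v^{**})\neq 0$, so $\vph^{5}$ drifts monotonically and one simply stops when it hits $0$; finally return $\vph^{1}$ to $v^{0}$ and invoke local stability of the equilibrium \eqref{equilibrium-2}.

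Your flagged ``main obstacle'' is exactly what this phase of the paper's construction resolves: holding $\vph^{1}$ at a fixed shifted level for a variable duration \emph{is} a one-parameter family, and its effect on $\vph^{5}(\infty)$ is eventually monotone (asymptotically linear) in the holding time, so the intermediate value step becomes a stopping-time argument. Recasting your bump family in this ``hold at $v^{0}\pm 10^{-k}$ and stop'' form would close the gap and make your proof coincide with the paper's.
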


See Section \ref{sec:5} for the proof.
We insist on the fact that choice (\ref{OU-HH-equilibrium}) is different from our choice in  \cite{OU-HH} (where constant input $c\approx -0.0534$ was chosen such that the voltage $v^c=0$ equals zero, and where $v^c= ( 0,  n_\infty(0), m_\infty(0), h_\infty(0) )$ was considered in the first four components of $x^*$ which leads to different properties of the control, cf.\ \cite{OU-HH} section 2.4). 
We sum up the above discussions in the following theorem, immediate from Propositions \ref{prop:1} and~\ref{prop:2}.

\begin{theo}\label{theo:2} CIR-HH (under the assumption $2 a > 1 $) and OU-HH are positive Harris recurrent 
processes for which Theorem \ref{theo:1} and Corollary \ref{cor:1bis} hold.
\end{theo}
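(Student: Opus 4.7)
The plan is to assemble the verifications that are distributed across Section \ref{sec:setting}. The statement of Theorem \ref{theo:2} collects two conclusions: positive Harris recurrence, and applicability of Corollary \ref{cor:1} together with Proposition \ref{cor:1bis}. Both reduce to checking Assumptions \ref{ass:1}--\ref{ass:3} (plus \ref{ass:5} for the analytic part) and then invoking Theorem \ref{theo:1} and Proposition \ref{cor:1bis}.

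First, I would point to the verifications of Assumption \ref{ass:1} already done in the text immediately preceding Propositions \ref{prop:1} and \ref{prop:2}. In both cases $E$ is exhibited as $\bigcup_m C_m$ with $C_m = {\rm cl}(G_m)$ for an increasing family of bounded convex open sets. Parts b) and b') of Assumption \ref{ass:1} follow from strict positivity of $\al_j$ and $\beta_j$, which forces the gating components to leave $\{0,1\}$ instantaneously, together with the Feller condition $2a>1$ in the CIR case, which keeps $\xi$ in $(0,\infty)$. Non-explosion (part c) is a consequence of the Lyapunov function produced by Propositions \ref{prop:1} and \ref{prop:2}. Smoothness on the open set $U$ is clear from the explicit analytic form of $F$, $\al_j$, $\beta_j$, and of the drift and volatility of $\xi$.

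Next, Propositions \ref{prop:1} and \ref{prop:2} deliver Assumptions \ref{ass:2}, \ref{ass:3} and \ref{ass:5} for CIR-HH (under $2a>1$) and OU-HH respectively, with the specific attainable points $x^*$ given in (\ref{CIRcandidate}) and (\ref{OU-HH-equilibrium}). Each proposition supplies a Lyapunov function, verifies full weak Hoermander dimension at $x^*$, exhibits a deterministic control steering arbitrary starting points to $x^*$ within $E$, and records real analyticity on $\bbt \times U$.

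With Assumptions \ref{ass:1}--\ref{ass:3} in hand, positive Harris recurrence of the skeleton chain $(X_{kT})_{k\in\bbn_0}$ and of the continuous-time process $\ov X$ follows directly from Theorem \ref{theo:1}, and Corollary \ref{cor:1} then yields the ergodic theorems. Adding Assumption \ref{ass:5}, Proposition \ref{cor:1bis} gives the weak Hoermander condition on all of $E$, the strong Feller property, and Lebesgue densities for $\mu$ and for the time-averaged projection of $\ov\mu$. The substantive difficulty of Theorem \ref{theo:2} does not lie in this wrap-up argument but in the two propositions themselves; there the main obstacle will be the explicit construction of a control driving an arbitrary starting point of $E$ towards the resting equilibrium $x^*$ while remaining inside $E$, in the presence of the strongly nonlinear Hodgkin-Huxley drift $F$ and with one-dimensional noise entering only via the autonomous $\xi$-component.
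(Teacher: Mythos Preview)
Your proposal is correct and matches the paper's own treatment: the paper states that Theorem \ref{theo:2} is ``immediate from Propositions \ref{prop:1} and \ref{prop:2}'', relying on the verification of Assumption \ref{ass:1} done in the text preceding those propositions and then invoking Theorem \ref{theo:1} and Proposition \ref{cor:1bis}. Your write-up simply spells out this assembly in more detail, and you are right that all the real work lies in the propositions.
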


\subsection{Limit theorems: an empirical distribution function for the interspike times
}\label{sec:limit theorems}
 
Positive Harris recurrence allows to speak of spiking characteristics of the neuron in a sense 
of strong laws of large numbers. 
Interspike times are a notion of major interest in neuroscience. We stress that data (intracellular recording of the membrane potential in a cortical neuron in good time resolution) show that {\it the} time where a spike begins or {\it the} time where it ends cannot be identified from behavior of the variable $v$ taken alone, e.g.\ in form of thresholds for $v$ (which do not exist) or in form of other simple criteria based only on observation of $v$. The Hodgkin-Huxley model accounts for this since the gating variables $n,m,h$ are responsible for opening or closing of ion channels. During an interspike time, we observe $m < h$, during a spike we observe $m>>h$ (more exactly: $m$ close to $1$ and $h$ small), at the end of the spike we observe again $m < h$. The time interval on which $m < h$ holds includes both a kind of refractory period immediately following a spike and then a waiting time up to occurrence of a new spike. 
We shall characterize the spiking activity through stopping times defined in terms of $h$ and $m$. This is for large $t$ approximately equivalent to a characterization in terms of the past of $v$, as a consequence of  representations (41) in \cite{HH-TD} for the gating variables $j\in\{n,m,h\}$ 
\beqq\label{expl_expr_gating_var}
j_t \;=\; j_0\,e^{-\int_0^t (\al_j+\beta_j)(v_s)\,ds} \;+\; \int_0^t \al_j(v_r) \,e^{-\int_r^t (\al_j+\beta_j)(v_s)\,ds}\, dr \;\;,\;\; t\ge 0 
\eeqq
where the influence of the starting value vanishes as $t\to\infty$. This shows also  
that gating variables in a stochastic Hodgkin-Huxley model (\ref{xiHH_general}) have $\calc^1$-paths. 
Let $X_t=(X_t)_{t\ge 0}$ denote either CIR-HH or OU-HH as in the preceding subsection. Define events 
$$
F_{\rm sp} \;:=\; \{ x = (v,n,m,h,\zeta)\in E : \,m\,>\,h\, \} \quad,\quad F_{\rm b} \;:=\; \{ x = (v,n,m,h,\zeta)\in E : \,m\,<\,h\, \}
$$ 
with subscripts for `spike' or `between successive spikes', then introduce stopping times by $\si_0\equiv 0$ and 
\beqq\label{def-tau_n}
\tau_n \;:=\; \inf\left\{ t>\si_{n-1} : X_t \in F_{\rm sp} \right\}   \;\;,\;\; 
\si_n \;:=\; \inf\left\{ t>\tau_n+\delta : X_t\in F_{\rm b} \right\} \;\;,\;\; n\ge 1  \;,
\eeqq
where we think of $\delta>0$ as a deterministic refractory period  
(this period, during which the neuron is not able to respond to any stimulus whatsoever as observed by biologists, cf.\ \cite{Izh-07}, provides a key tool for the proof of Theorem \ref{theo:4} below). 
The sequence $(\tau_n)_{n\in\bbn}$ marks on the time axis the beginning of successive spikes. It will be such that $\tau_n<\infty$ for every $n$ and $\tau_n\uparrow\infty$ as $\nto$, almost surely, as a consequence of the following theorem.

\begin{theo}\label{theo:cham}
For CIR-HH under the assumption $ 2 a > 1 $ or OU-HH, for all $ x \in E: $\\ 
a) $Q_x ( \mbox{ there exists an infinite number of spikes } ) = 1.$\\
b) $Q_x ( \mbox{ there exists an infinite number of intervals $ [k T, (k+1) T [ $ on which no spike appears }) = 1.$ 
\end{theo}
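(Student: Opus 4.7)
Both parts will follow from the positive Harris recurrence established in Theorem~\ref{theo:2}, combined with Corollary~\ref{cor:1} and controllability arguments based on the support theorem. The preliminary step is to prove $\mu(F_b) > 0$ and $\mu(F_{sp}) > 0$, where $\mu$ is the invariant probability of the $T$-skeleton chain on $E$. The set $F_b$ contains the attainable point $x^*$ of Propositions~\ref{prop:1} and~\ref{prop:2}: at the rest potential $v^0 \approx 0.0462$ the explicit formulas for $\alpha_j, \beta_j$ from \cite{Izh-07} yield numerically $m_\infty(v^0) < h_\infty(v^0)$. Combining attainability of $x^*$ from every $x \in E$, the support theorem, positive transition densities near $x^*$ from \cite{HH-TD}, and the invariance relation $\mu = \mu P_{0,T}$, one sees that the Lebesgue density of $\mu$ (Proposition~\ref{cor:1bis}~c) is strictly positive on a neighborhood of $x^*$, whence $\mu(F_b) > 0$. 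For $F_{sp}$, I would construct from $x^*$ a finite-duration control $\dot{\tt h}$ driving $\xi$, and hence $v$, above the firing threshold; the monotone structure of the gating ODEs then forces $m_t > h_t$ along the controlled trajectory, which therefore reaches $F_{sp}$, and the same invariance argument yields $\mu(F_{sp}) > 0$.

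For part (a), with $\mu(F_b), \mu(F_{sp}) > 0$ in hand, Corollary~\ref{cor:1} applied to $G = 1_{F_{sp}}$ and $G = 1_{F_b}$ (equivalently, direct Harris recurrence) implies that $(X_t)_{t \geq 0}$ enters each of $F_{sp}$ and $F_b$ at arbitrarily large times, $Q_x$-almost surely for every $x \in E$. An induction on $n$ then shows that all $\tau_n, \sigma_n$ are a.s.\ finite: applying the strong Markov property at $\sigma_{n-1}$ together with the i.o.\ visit of the shifted process to $F_{sp}$ gives $\tau_n < \infty$, and applying it at $\tau_n + \delta$ together with the i.o.\ visit to $F_b$ gives $\sigma_n < \infty$. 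Since $\sigma_n \geq \tau_n + \delta$, the sequence $(\tau_n)$ strictly increases with gaps bounded below by $\delta$, so $\tau_n \uparrow \infty$ and infinitely many spikes occur.

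For part (b), I work with the $T$-segment chain $Y_k := (X_{kT+s})_{0 \leq s \leq T}$ with values in $C([0, T], E)$, Harris recurrent with invariant probability $\pi$ as noted after Theorem~\ref{theo:1} (following \cite{H-K}). Setting $B := \{\omega \in C([0, T], E) : \omega(s) \in F_b \text{ for all } s \in [0, T]\}$, the inclusion $\{Y_k \in B\} \subset \{\text{no spike starts in } [kT, (k+1)T)\}$ reduces the problem to $\pi(B) > 0$, after which Theorem~2.1 of \cite{H-K} yields infinitely many $k$ with $Y_k \in B$ almost surely. To show $\pi(B) > 0$, I exhibit a $T$-periodic control $\dot{\tt h}_0 \in L^2([0, T])$ for which the Stratonovich control trajectory started at $x^*$ stays constant equal to $x^*$ on $[0, T]$: the first four coordinates of $x^*$ already form the equilibrium of the deterministic 4d HH system under zero constant input, so it suffices to choose $\dot{\tt h}_0(t)$ to keep the Stratonovich $\xi$-equation stationary at the fifth coordinate of $x^*$, which for CIR-HH yields $\dot{\tt h}_0(t) = \frac{5}{4} - a - S(t)$ and for OU-HH yields $\dot{\tt h}_0(t) = -S(t)$, both bounded and $T$-periodic. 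The localized support theorem (Theorem~\ref{theo:4bis}) then assigns positive $Q_{x^*}^T$-mass to a tubular neighborhood of this constant path contained in $F_b$; weak continuity of $y \mapsto Q_y^T$ in the initial condition propagates this positivity to a neighborhood $V$ of $x^*$ of positive $\mu$-measure, giving $\pi(B) > 0$. The main obstacle is the explicit construction of the spike-inducing control used to reach $F_{sp}$ and the uniform tube estimate around $x^*$ over starting points of positive $\mu$-measure; both should be handled by the localized support theorem of Section~\ref{sec:control} combined with detailed monotonicity properties of the Hodgkin--Huxley drift.
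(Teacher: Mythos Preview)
Your approach is correct in substance but differs genuinely from the paper's argument. The paper proceeds more directly: by the proof of Lemma~\ref{lem:1.5}, the skeleton chain visits an arbitrarily small ball $D^*\subset U^{**}$ around $x^*$ infinitely often. From any $x'\in D^*$ the paper then invokes the ``chameleon property'' (Theorem~5 of \cite{HH-TD}): the stochastic HH, with positive probability on a fixed time window, stays uniformly close to the deterministic HH driven by any prescribed smooth input $I(\cdot)$. Choosing $I\equiv 0$ keeps the path near the stable equilibrium (hence in $F_{\rm b}$) over a full period, yielding (b); choosing $I\equiv c$ with $c$ large forces a spike within $k_0T$, yielding (a). A Borel--Cantelli step along the i.o.\ visits to $D^*$ finishes both parts. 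No computation of $\mu(F_{\rm sp})$, $\mu(F_{\rm b})$, or of the segment-chain invariant measure $\pi$ is needed.

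Your route instead establishes positivity of the relevant invariant measures ($\mu(F_{\rm sp})>0$, $\mu(F_{\rm b})>0$, $\pi(B)>0$) and then applies the ergodic theorems of Corollary~\ref{cor:1} and \cite{H-K}. This is heavier---you need the density of $\mu$, the strong Feller property, continuity in the starting point, and for $\mu(F_{\rm sp})>0$ you must arrange the spike-inducing control to place the path in $F_{\rm sp}$ \emph{at} a multiple of $T$ (or switch to $\ov\mu$ on $\bbt\times E$). Your acknowledged ``main obstacles'' (the spike-inducing control and the uniform tube estimate over a neighbourhood of $x^*$) are precisely what the paper outsources to the chameleon property of \cite{HH-TD}, so they are genuine but already handled in the literature. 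What your approach buys is a quantitative byproduct: the positive fractions $\mu(F_{\rm sp})$, $\mu(F_{\rm b})$, $\pi(B)$ feed directly into the limit statements of Section~\ref{sec:limit theorems}, whereas the paper's argument gives only the qualitative i.o.\ conclusion.
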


The proof of Theorem \ref{theo:cham} follows from support properties established in \cite{HH-TD} and is given in Section~\ref{sec:5} below. 
Immediate applications of Theorem \ref{theo:1} and Corollary \ref{cor:1} a) to CIR-HH and OU-HH are the proportion of time spent spiking (or in $F_{\rm sp}$) by 
$$
\lim_{\nto}\; \frac{1}{\tau_n}\; \sum_{j=1}^n(\si_j-\tau_j) \;=\; \lim_{t\to\infty}\; \frac1t \int_0^t 1_{F_{\rm sp}}(X_s)\, ds  \;=\; \frac1T\int_0^T ds\, [\mu P_{0,s}](F_{\rm sp}) 
$$
or the specific shape of a spike by means of test functions $\psi$ through  
$$
\lim_{t\to\infty} \frac1t \int_0^t \psi(X^1_s)\, 1_{F_{\rm sp}}(X_s)\, ds  \;=\; \frac1T\int_0^T\int_E  ds\, [\mu P_{0,s}](dx)\, \psi(x^1)\, 1_{F_{\rm sp}}(x) 
$$
almost surely. As an essentially different application of strong laws of large numbers, far beyond (and not deducible from) Corollary \ref{cor:1} and based on analyticity of all coefficients in CIR-HH and OU-HH,  
we shall consider below the distribution function of the length of interspike intervals.  
Successive interspike times have no reason to be independent, and we may have single spikes as well as spike bursts (this follows from Theorem 5 of \cite{HH-TD} combined with the complex behaviour of deterministic Hodgkin-Huxley models). Positive Harris recurrence allows for the following Glivenko-Cantelli type result.

\begin{theo}\label{theo:4} 
For CIR-HH with $2a>1$ or OU-HH, consider empirical distribution functions for interspike times defined from the stopping times in (\ref{def-tau_n}):
$$
\wh F_n(t) \;:=\; \frac1n \sum_{j=1}^n 1_{[0,t]}(\tau_{j+1}-\tau_j) \quad,\quad t\ge 0   \;.  
$$
Then for every choice of a starting point $x \in E$, we have $Q_x$-almost surely as $\nto$ 
$$
\sup_{t\ge 0}\; \left|\, \wh F_n(t) \;-\; F(t) \,\right| \;\,\lra\;\; 0 \quad\mbox{as $n\to\infty$}
$$
where $F$ is a proper distribution function.  $\,F(t)$ has the interpretation of a relative number of expected interspike times smaller than $t$ over "typical" life cycles of $\ov X$.  
\end{theo}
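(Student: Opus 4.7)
The plan is to exploit positive Harris recurrence of $\ov X$ (Theorem \ref{theo:1}(b)) together with Nummelin splitting to produce a regeneration structure under which the blocks of interspike times falling between consecutive regeneration epochs are i.i.d.; a regenerative strong law then gives pointwise convergence of $\wh F_n$ to a deterministic limit $F$, and the classical P\'olya argument upgrades this to uniform convergence.

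The first step is to construct a regeneration atom for $\ov X$. By Proposition \ref{cor:1bis}, analyticity propagates the weak H\"ormander condition from $x^*$ to the whole state space, so $\ov X$ is strong Feller and, thanks to \cite{HH-TD}, admits a smooth, strictly positive Lebesgue transition density on a neighbourhood of $x^*$. Combined with positive Harris recurrence this furnishes a non-trivial small set $C\subset\ov E$ with a minorising probability $\nu$ and constant $\kappa>0$; Nummelin splitting then produces an atom $\alpha$ for the split process, with successive return epochs $R_1<R_2<\ldots$ that are $Q_x$-a.s.\ finite and satisfy $E[R_{k+1}-R_k]<\infty$, the excursions between consecutive $R_k$ being i.i.d. For $u\ge 0$ and $k\ge 1$, set
\beqq\label{NZ}
N_k := \#\{\,j\ge 1 : \tau_j \in [R_k,R_{k+1})\,\} \;,\quad
Z_k(u) := \sum_{\{j:\tau_j \in [R_k,R_{k+1})\}} 1_{[0,u]}(\tau_{j+1}-\tau_j) \;;
\eeqq
the pairs $(N_k,Z_k(\cdot))_{k\ge 1}$ are i.i.d.\ by strong Markov. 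The refractory period $\delta>0$ in (\ref{def-tau_n}) enforces $\tau_{j+1}-\tau_j\ge\delta$, hence $N_k\le\delta^{-1}(R_{k+1}-R_k)+1$ and $E[N_1]<\infty$; by Theorem \ref{theo:cham}(a), $\tau_n\uparrow\infty$ $Q_x$-a.s., so the number $K_n$ of completed cycles before $\tau_n$ tends to infinity. With $F(u):=E[Z_1(u)]/E[N_1]$, applying the i.i.d.\ strong law separately to $\sum_{k=1}^{K_n}Z_k(u)$ and $\sum_{k=1}^{K_n}N_k$, the contributions of the two incomplete boundary cycles being $O(N_0+N_{K_n+1})/n\to 0$, one obtains $\wh F_n(u)\to F(u)$ $Q_x$-a.s.\ for every fixed $u\ge 0$. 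The limit $F$ is non-decreasing, right-continuous, with $F(0^-)=0$ (since $\tau_{j+1}-\tau_j\ge\delta$) and $F(\infty)=1$ (since $N_1<\infty$ a.s.), hence a proper distribution function.

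Uniform convergence then follows from the standard P\'olya argument: given $\vep>0$, choose $0=u_0<u_1<\ldots<u_M<u_{M+1}=\infty$ with $F(u_{k+1}^-)-F(u_k)<\vep$; monotonicity of $\wh F_n$ and $F$ bounds $\sup_t|\wh F_n(t)-F(t)|$ by $\vep+\max_{0\le k\le M}\bigl(|\wh F_n(u_k)-F(u_k)|+|\wh F_n(u_k^-)-F(u_k^-)|\bigr)$, which tends $Q_x$-a.s.\ to $\vep$ as $\nto$, and letting $\vep\downarrow 0$ along a countable sequence finishes the proof. The key obstacle is the regeneration construction: positive Harris recurrence by itself is not enough to extract i.i.d.\ blocks, one additionally needs the local smoothness and strict positivity of transition densities around $x^*$, which is precisely where analyticity of the coefficients (Assumption \ref{ass:5}) and Proposition \ref{cor:1bis}(a) are indispensable; everything downstream of the splitting is then a routine regenerative/P\'olya estimate.
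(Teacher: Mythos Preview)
Your overall architecture matches the paper's: regenerate via Nummelin splitting, apply a strong law to the blocks, then upgrade via P\'olya. But the claim ``the pairs $(N_k,Z_k(\cdot))_{k\ge 1}$ are i.i.d.\ by strong Markov'' is where the argument breaks. The functional $Z_k(u)$ is \emph{not} measurable with respect to the $k$-th excursion $[R_k,R_{k+1})$: for the last index $j$ with $\tau_j\in[R_k,R_{k+1})$, the indicator $1_{[0,u]}(\tau_{j+1}-\tau_j)$ requires knowledge of $\tau_{j+1}$, which may well lie in the next excursion (or beyond). Thus $Z_k(u)$ and $Z_{k+1}(u)$ both depend on the $(k{+}1)$-th excursion and are not independent. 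This overhang cannot be absorbed into the ``boundary cycles'' correction you invoke, since it occurs at \emph{every} cycle boundary, not just the first and last.

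The paper repairs precisely this point, and it is worth noting two differences from your setup. First, it performs Nummelin splitting for the discrete-time skeleton chain $(X_{\ell T})_\ell$ rather than for $\ov X$ in continuous time: analyticity gives a one-step minorization $P_{0,T}(x',dy')\ge \alpha^*\,1_{C^*}(x')\,\nu^*(dy')$, so the regeneration epochs $R_n$ are integers with $R_{n+1}-R_n\ge 1$. This lower bound is essential. Second, for \emph{fixed} $t$ one chooses $k\in\bbn$ with $t\le (k{-}2)T$; then $Z_j(t)$ is $\calf_{(R_{j+1}+k-2)T}$-measurable (the indicator is settled by time $\tau_\ell+t<R_{j+1}T+(k{-}2)T$), hence $\calf_{R_{j+k-1}T}$-measurable since successive $R_n$ differ by at least $1$. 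The renewal at $R_{j+k-1}{+}1\le R_{j+k}$ then makes the $k$-decimated sequence $(Z_{j+\ell k}(t))_{\ell\ge 1}$ i.i.d.\ for each residue class $j$, and averaging over $j=1,\ldots,k$ yields the SLLN for $\sum Z_m(t)$. For the denominator, $Z_j(\infty)=N_j$ is already $\calf_{R_{j+1}T}$-measurable, so $2$-decimation suffices. Your continuous-time splitting gives no lower bound on $R_{k+1}-R_k$, so this decimation device is unavailable; this is the missing idea. Once you insert it (working with the skeleton chain), the rest of your write-up --- the refractory bound $N_k\le \delta^{-1}(R_{k+1}-R_k)T+1$, the ratio limit defining $F$, and the P\'olya argument --- goes through as you describe.
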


Here "typical" refers to life cycles which we can construct by Nummelin splitting, cf.\  \cite{Num-78}.  
The proof of Theorem~\ref{theo:4}, given in Section \ref{sec:5},  
works thanks to analyticity of the coefficients.

\section{Control systems. Weak Hoermander dimension} 
In the sequel, given an SDE $dY_t=\delta(t,Y_t)dt+\Sigma(Y_t)dB_t$ driven by Brownian motion $B$ in the Ito sense, we have to pass to its Statonovitch form $dY_t \;=\; \wt \delta(t,Y_t)\, dt \;+\; \Sigma(Y_t) \circ dB_t$ with Stratonovich drift 
\beqq\label{strato}
\wt \delta ^i (t,y) \;=\;  \delta ^i (t,y)  \;-\;  \frac12\sum_{\ell=1}^m  \sum_{j=1}^d \Sigma^{j,\ell}(x)\frac{\partial \Sigma^{i,\ell}}{\partial x^j}(x) \quad,\quad 1\le i\le d \; 
\eeqq
 (cf. Kunita \cite{Kun-90} p.\ 60,  Bass \cite{Bas-98} p.\ 198-199).

\subsection{Control systems: extension of the support theorem}\label{sec:control1}

The control theorem goes back to Strook and Varadhan \cite{StrVar-72}. We quote\footnote{
time-dependent drift does not alter the structure of proof in \cite{MilSan-94}; see also remark 2.2 and (4.2)-(4.3) in \cite{StrVar-72}.
} it in the form Millet and Sanz-Sole \cite{MilSan-94}, theorem 3.5.  Consider an SDE $dY_t=\delta(t,Y_t)dt+\Sigma(Y_t)dB_t$, with state space $E=\bbr^d$, $m\le d$ is the dimension of the driving Brownian motion $(B_t)$. All components of $\Sigma(\cdot)$ are $\calc^2$ on $\bbr^d$, bounded with bounded derivatives of orders $1$ and $2$, and $\delta(\cdot,\cdot)$ is globally Lipschitz and bounded on $\bbt\times\bbr^d$ (cf. condition (H) of \cite{MilSan-94}).  

For time horizon $t_0<\infty$ which is arbitrary but fixed, write $\,\tt H\,$ for the Cameron-Martin space of measurable functions ${\tt h}:[0,t_0]\to \bbr^m$ having absolutely continuous components ${\tt h}^\ell(t) = \int_0^t \dot{\tt h}^\ell(s) ds$ with $\int_0^{t_0}[\dot{\tt h}^\ell]^2(s) ds < \infty$, $1\le \ell\le m$. For $x\in\bbr^d$ and ${\tt h}\in{\tt H}$, consider the deterministic system 
\beqq\label{generalcontrolsystem}
\vph = \vph^{({\tt h}, x)} \quad\mbox{solution to}\quad d \vph (t) = \wt \delta ( t, \vph (t) ) dt + \Sigma( \vph (t) ) \dot{\tt h}(t) dt, \quad\mbox{with starting point $\vph(0)=x$.}  
\eeqq
 Thus $\vph$ is a function $[0,t_0]\to \bbr^d$. The control theorem states that in restriction to finite time horizon $t_0$, the support of the law of $(Y_t)_{0\le t\le t_0}$ with starting point $Y_0=x$ coincides with the closure in $C([0,t_0],\bbr^d)$ of the set of control paths 
$$
A_x \;:=\; \left\{  \left( \vph^{({\tt h}, x)}(t) \right)_{0\le t\le t_0} : \,{\tt h}\in{\tt H}\, \right\} \;. 
$$
This follows from approximation of Stratonovich integrals by adapted polygonal interpolation of the driving Brownian path, and from Girsanov theorem. Polygonal interpolation means that there is some --sufficiently fine-- finite partition $0=s_0 < s_1 < \ldots < s_\nu = t_0$ such that all components  $\dot{\tt h}^\ell$ in (\ref{generalcontrolsystem}) remain constant between $s_{r-1}$ and $s_r$. Such controls ${\tt h}$ are called {\em admissible} by Arnold and Kliemann \cite{ArnKli-87}; in particular the support of the law of $(Y_t)_{0\le t\le t_0}$ starting from $Y_0=x$ coincides with the closure in $C([0,t_0],\bbr^d)$ of the following subset $\wt A_x$ of $A_x$: 
$$ 
\wt A_x \;:=\; \left\{  \left( \vph^{({\tt h}, x)}(t) \right)_{0\le t\le t_0} : \,{\tt h}\in{\tt H}\;\;\mbox{admissible}\; \right\} \;. 
$$

Extending this result to processes $X$ with state space $E\subset\bbr^d$ according to Assumption \ref{ass:1}, by using localization, we obtain the following result. 

\begin{theo}\label{theo:4bis}
Grant Assumption \ref{ass:1}. Denote by $ Q_x^{t_0} $ the law of the solution $ (X_t)_{0 \le t \le t_0} $ of  \eqref{sde}, starting from $X_0 = x .$ Let $\,\vph = \vph^{({\tt h},x)}\,$ (whenever it exists) denote a solution to
\beqq\label{controlsystem}
d \vph (t) \;=\; \wt b ( t, \vph (t) )\, dt \;+\; \si( \vph (t) )\, \dot{\tt h}(t)\, dt   \quad,\quad \vph(0)=x.
\eeqq
Then the following two assertions hold true.   \\
a) Fix $ 0 < t_0 < \infty ,$ $ x \in E  $ and  $  {\tt h } \in \tt H $ such that $\,\vph = \vph^{({\tt h},x)}\,$ exists on some time interval $ [ 0, \wt T ] $ for $\wt T > t_0$ and takes values in ${\rm int}(E)$ on $]0,\wt T]$. Then
$$ \left(\vph^{({\tt h} , x ) }\right)_{| [0,  t_0 ] } \in \overline{ {\rm supp} \left( Q_x^{t_0 } \right)}.$$

b) 
For $\,\dot{\tt h}:[0,\infty)\to\bbr^m\,$ piecewise constant and without accumulation of jumps in finite time and for all $x\in E$, $\,\vph = \vph^{({\tt h},x)}\,$ exists and takes values in $E$ on some interval 
$$
0\le t<s({\tt h}, x) \quad,\quad 0< s({\tt h}, x)\le\infty \;. 
$$
Then for all $0<t_0<\infty$ and $x\in E$, the support of $Q_x^{t_0} $ 
is contained in the closure of 
\beqq\label{newcontrolsystems}
\left\{  \left( \vph^{({\tt h}, x)}(t) \right)_{0\le t\le t_0} : \,{\tt h}\in{\tt H}\;\;\mbox{admissible and satisfying $s({\tt h}, x)>t_0$}\; \right\} 
\eeqq
in the sense of uniform convergence in $C([0,t_0],E)$. 
\end{theo}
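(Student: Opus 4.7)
The plan is to reduce both assertions to the classical Stroock-Varadhan/Millet--Sanz-Sol\'e support theorem on $\bbr^d$ via a localization argument based on the compact exhaustion $(C_m)_m$ of Assumption \ref{ass:1}, and then transfer back to the state space $E$. Concretely, for each $m$ I would pick an open neighborhood $V_m \subset U$ of $C_{m+1}$ whose closure is compact in $U$, and choose a smooth cutoff $\chi_m \equiv 1$ on $C_{m+1}$ and $\chi_m \equiv 0$ outside $V_m$. Setting $b^m := \chi_m \cdot b + (1-\chi_m)\cdot 0$ and $\sigma^m := \chi_m \cdot \sigma + (1-\chi_m) \cdot 0$ (or any smooth extension making $\sigma^m$ bounded with bounded $C^2$-derivatives), one obtains globally Lipschitz $C^2$ coefficients on $\bbt\times\bbr^d$ which agree with $b,\sigma$ on a neighborhood of $C_{m+1}$, hence for which the associated Stratonovich drift $\wt b^m$ coincides with $\wt b$ on $C_{m+1}$. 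Denote the law on $C([0,t_0],\bbr^d)$ of the resulting global SDE started at $x$ by $\wt Q^{m,t_0}_x$. By pathwise uniqueness the processes $X$ and $\wt X^m$ coincide up to the exit time $T_m$ from $C_m$.

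For part~a), since $\vph=\vph^{({\tt h},x)}$ is continuous on $[0,\wt T]$ and takes values in $\mathrm{int}(E)$ on $(0,\wt T]$, the compact $\vph([0,t_0])$ lies in $\mathrm{int}(C_{m+1})$ for $m$ large enough (using Assumption~\ref{ass:1}(a)), at least for $t$ bounded away from $0$; a small neighborhood of $\vph(0)=x$ in $C_{m+1}$ handles the initial time. On this region $\vph$ coincides with the control path of the globalized SDE, so by the classical support theorem $\vph \in \mathrm{supp}(\wt Q^{m,t_0}_x)$. Any sufficiently small uniform tube around $\vph$ consists of paths remaining in $C_{m+1}$, and on such paths $X$ and $\wt X^m$ agree, so the tube carries the same positive mass under $Q^{t_0}_x$ and $\wt Q^{m,t_0}_x$. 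Hence $\vph|_{[0,t_0]} \in \overline{\mathrm{supp}(Q^{t_0}_x)}$.

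For part~b), take $\omega \in \mathrm{supp}(Q^{t_0}_x)$. Since $\omega\in C([0,t_0],E)$, its image is compact in $E$ and lies in some $C_m$. Any uniform $\varepsilon$-tube around $\omega$ in $C([0,t_0],\bbr^d)$ with $\varepsilon$ small enough consists of paths contained in $\mathrm{int}(C_{m+1})$, on which $X$ and $\wt X^{m+1}$ are pathwise equal, hence the tube receives the same mass under both laws. Therefore $\omega \in \mathrm{supp}(\wt Q^{m+1,t_0}_x)$. The classical theorem of Millet and Sanz-Sol\'e then yields admissible controls ${\tt h}_n \in \tt H$ with $\vph^{({\tt h}_n,x)} \to \omega$ uniformly on $[0,t_0]$, where $\vph^{({\tt h}_n,x)}$ is computed with the globalized coefficients. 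For $n$ large, $\vph^{({\tt h}_n,x)}([0,t_0]) \subset C_{m+1} \subset E$, so the cutoff is inactive along these paths and they agree with the control paths of the original SDE, with $s({\tt h}_n, x) > t_0$.

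The main obstacle I expect is the behavior near the boundary $\partial(E)\cap E$: when $x$ lies on it, one must verify that the neighborhoods in $C([0,t_0],\bbr^d)$ used in the above tube arguments still lie within the domain where the cutoff is inactive. The entrance-boundary property~\ref{ass:1}(b)-(b') and the fact that $\omega$ (resp.\ $\vph$ in part~a) is forced to enter $\mathrm{int}(E)$ at positive time, combined with the openness of $V_m$ relative to the compact tube, is what makes this work; one may have to replace $m$ by $m+1$ once or twice and use compactness of $\omega([0,t_0])$ to choose $\varepsilon$ uniformly. No other step is significantly more than bookkeeping once the cutoff construction is in place.
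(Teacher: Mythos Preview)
Your treatment of part~a) matches the paper's: localize via smooth cutoffs, apply the classical support theorem to the globalized SDE, and transfer back using that $X$ and the localized process agree up to the exit time from $C_{m+1}$.

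For part~b) you take a genuinely different route. The paper does \emph{not} argue pointwise over the support via tubes. Instead it builds, on the probability space, an explicit sequence ${\tt h}_m(W)$ of adapted polygonal interpolations of the driving Brownian motion such that $\sup_{[0,2t_0]}|X_m-\vph_m^{({\tt h}_m(W),x)}|\le 2^{-m}$; Borel--Cantelli then gives almost-sure uniform approximation of $X$ itself by these controls. The entrance property is used to find a random level $\wt m$ with $X_t\in G_{\wt m+1}$ for $0<t\le 2t_0$, and from this one reads off $s({\tt h}_m(W),x)>t_0$ for all large $m$. So the paper approximates the \emph{process} almost surely, whereas you approximate each \emph{support point} separately.

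Your tube argument has a real gap that your final paragraph only gestures at. After obtaining admissible ${\tt h}_n$ with the \emph{globalized} controls converging to $\omega$ uniformly in $C([0,t_0],\bbr^d)$, you claim these paths eventually lie in $C_{m+1}\subset E$, hence $s({\tt h}_n,x)>t_0$. But when $x\in\partial E\cap E$ (or $\omega$ touches $\partial E\cap E$), an $\varepsilon$-tube around $\omega$ in $\bbr^d$ typically extends \emph{outside} $E$: in Example~1 with $x=(0,0)$, paths $\varepsilon$-close to $\omega$ near $t=0$ may have negative second coordinate. Staying inside the region where the cutoff is inactive (an open set in $\bbr^d$ containing $C_{m+1}$) is not the same as staying in $E$; nothing you have written prevents the approximating control from briefly leaving $E$ near $t=0$, which forces $s({\tt h}_n,x)<t_0$ by definition and removes it from the set~(\ref{newcontrolsystems}). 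To close this you must show that every admissible control started at $\partial E\cap E$ remains in $E$ (indeed in some $C_{n+1}$) for positive time, i.e.\ transfer the entrance-boundary property from the process $X$ to the deterministic control system. The paper does exactly this in an intermediate step, using the reverse inclusion of the support theorem for the localized process; without it your argument for b) is incomplete.
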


This theorem will be proved in the Appendix Section \ref{sec:app}. 
To our best knowledge this localized version of the control theorem adresses a new type of problem (completely different e.g.\ from control in presence of reflexion, see \cite{DosPri-82} and \cite{Pet-96}), arising from the link between topological properties of the state space and properties of the process formulated in Assumption \ref{ass:1} which itself is imposed by the structure of Hodgkin-Huxley equations.

\subsection{Weak Hoermander condition}\label{sec:hoer}

We grant Assumption \ref{ass:1} and put the SDE \eqref{sde} in Stratonovich form $$
dX_t \;=\; \wt b(t,X_t)\, dt \;+\; \si(X_t) \circ dW_t.
$$
Recall that all coefficients of equation \eqref{sde} are $C^\infty-$functions on $ \bbr_+ \times U$ for some open set $ U \subset \bbr^d $ such that $ E \subset U .$  
Consider the process $\ov X=((i_T(t),X_t))_{t\ge 0}$ as in Section \ref{sec:setting}, write $\ov U = \bbt\times U $ and $ \ov E = \bbt \times E.$ 
In view of a representation $\,d\ov X_t = V_0(\ov X_t)dt + \sum_{l=1}^m V_\ell(\ov X_t)\circ d W^\ell_t\,$ for the process $\ov X$  
in terms of the components $W^1,\ldots,W^m$ of the driving Brownian motion in equation (\ref{sde}), we define  
(cf.\ Section 3.1 in \cite{HH-TD})   
vector fields on $\ov U $ taking values in $\bbr^{1+d}$ by 
$$
V_0(t,x) := \left( \begin{array}{l}  
\quad1 \\ \wt b ^1(t,x) \\ \quad\vdots \\ \wt b ^d(t,x) 
\end{array}\right)    
\quad,\quad 
V_\ell(t,x)  := \left( \begin{array}{l}  
\quad0 \\ \si ^{1,\ell}(x) \\ \quad\vdots \\ \si ^{d,\ell}(x) 
\end{array}\right) 
\;,\;   1\le \ell\le m \;,  
$$
that we identify with the first order differential operators 
$$
V_0 \;=\; \frac{\partial}{\partial t} + \sum_{j=1}^d \wt b^j(t,x)\frac{\partial}{\partial x^j} 
\quad,\quad 
V_\ell \;=\;   \sum_{j=1}^d \si^{j,\ell}(x)\frac{\partial}{\partial x^j} \;,\;   1\le \ell\le m \;. 
$$
Given a vector field $L:\ov U\to \bbr^{1+d}$ we denote by $L^i$ its components for $i=0$ or $i=1,...,d$. 
Whenever $L$ has $0$-component 
$L^0\equiv 0$, we have $[V_0,L]^0 \equiv [V_\ell,L]^0\equiv  0$ for $1\le \ell\le m$ and  
$$
[V_0,L]^i = \frac{\partial L^i}{\partial t} + \sum_{j=1}^d \left( V_0^j\, \frac{\partial L^i}{\partial x^j} - L^j\, \frac{\partial V_0^i}{\partial x^j}\right)  
$$
$$
[V_\ell,L]^i =  \sum_{j=1}^d \left( V_\ell^j\, \frac{\partial L^i}{\partial x^j} - L^j\, \frac{\partial V_\ell^i}{\partial x^j}\right)  
$$
for $i=1,...,d$, with the usual notation for the Lie bracket of two vector fields $[A,B]:= AB-BA$.

\begin{defin}\label{def:hoerm1} 
For fixed $N\in\bbn$, define a set $\call_N$ of vector fields by `initial condition' $V_1,\ldots,V_m \in \call_N$ and at most $N$ iteration steps 
\beqq\label{*}
L\in\call_N \;\Lra\; [L,V_0] , [L,V_1] , \ldots , [L,V_m] \in \call_N  \;.   
\eeqq
Write $\call_N^*$ for the closure of $\call_N$ under Lie brackets; its linear hull $\,{\rm span}(\call_N^*)$ is 
the Lie algebra ${\rm LA}(\call_N)$ spanned by $\call_N$. We define $\,\call := {\rm LA}(\bigcup_N \call_N)\,$ and $\,\Lambda := {\rm LA}(V_0, V_1, \ldots, V_m)\,$.  
\end{defin}

Note that all elements of $\call_N^*$ have $0$-component equal to zero, so $\,d\,$ is an obvious upper bound for 
${\rm dim\, span}(\call_N)$,  ${\rm dim}({\rm LA}(\call_N))$ and ${\rm dim}(\call)$ 
on $\ov E$. Now we can give the definition of full weak Hoermander dimension in Assumption~\ref{ass:3}:

\begin{defin}\label{def:hoerm2}
We say that a point $x^*\in U\supset E$ is of {\em full weak Hoermander dimension} if there is some $N\in\bbn$ such that   
\beqq\label{fwHd}
{\rm dim\, span}(\call_N) 
(s,x^*) \; =\; d \quad\mbox{independently of $s\in\bbt$} \;. 
\eeqq
\end{defin}

\vskip0.3cm 
Up to uniformity in time on the torus, this is the usual form of the definition (cf.\ e.g.\ Hairer \cite{Hairer2011} Def.\ 1.2).  We have the following.

\begin{prop}\label{prop:4}
Under Assumptions 1 and 2 a), if points $x^*$ in ${\rm int}(E)$ of full weak Hoermander dimension exist, then open neighborhoods $\,\wt U$ of $x^*$ exist in ${\rm int}(E)$ such that 
$$ 
{\rm dim\, span}(\call_N)(s,x) \;=\;  {\rm dim}(\call)(s,x)  
\;=\; d \quad\mbox{for all $(s,x)\in \bbt{\times}\wt U$} \;.
$$
\end{prop}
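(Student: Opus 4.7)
The plan is to combine lower semi-continuity of the matrix rank with compactness of the torus $\bbt$ (the latter being what the $T$-periodicity of Assumption \ref{ass:2} a) buys us, since it allows the time coordinate to live on the compact $\bbt$).

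First, by Assumption \ref{ass:1} d), all elements of the finite set $\call_N$ are smooth vector fields on $\bbt\times U$, and by the remark following Definition \ref{def:hoerm1} each has $0$-component identically zero, so their evaluations at $(s,x)$ are vectors in $\bbr^d$. Arranging the elements of $\call_N$ as columns of a matrix $M(s,x)$, one has ${\rm rank}\,M(s,x) = {\rm dim\,span}(\call_N)(s,x)$, with the universal bound ${\rm rank}\,M \le d$ on $\bbt\times U$. The hypothesis on $x^*$ says ${\rm rank}\,M(s,x^*) = d$ for every $s\in\bbt$. The routine local extension then proceeds as follows: for each fixed $s\in\bbt$, select a $d\times d$ submatrix of $M(s,x^*)$ with non-zero determinant; by continuity, this minor remains non-zero on an open neighborhood $W_s$ of $(s,x^*)$ in $\bbt\times{\rm int}(E)$, so ${\rm rank}\,M \equiv d$ on $W_s$.

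The main step, where uniformity is needed, is to upgrade these $s$-dependent neighborhoods to a single spatial neighborhood of $x^*$ that works for all $s$ simultaneously. The open set $W := \bigcup_{s\in\bbt} W_s$ covers the compact slice $\bbt\times\{x^*\}$, so by the tube lemma there exists an open $\wt U\subset {\rm int}(E)$ containing $x^*$ with $\bbt\times\wt U \subset W$. Hence ${\rm dim\,span}(\call_N)(s,x) = d$ for every $(s,x) \in \bbt\times\wt U$. The second equality of the proposition then follows from the inclusion $\call_N \subset \call$ (so that ${\rm dim\,span}(\call_N) \le {\rm dim}(\call)$) together with the universal upper bound $d$ on ${\rm dim}(\call)$, forcing ${\rm dim}(\call)(s,x) = d$ on $\bbt\times\wt U$.

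The principal obstacle is precisely the uniformity in $s$; without compactness of $\bbt$ (that is, without the $T$-periodicity from Assumption \ref{ass:2} a)) the local argument would only furnish a time-dependent spatial neighborhood and the proposition's uniform statement would fail.
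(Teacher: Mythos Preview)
Your proof is correct. For the spatial extension you argue exactly as the paper does in its step~3): lower semi-continuity of the rank makes $\{(s,y):{\rm dim\,span}(\call_N)(s,y)=d\}$ open, and since it contains the compact slice $\bbt\times\{x^*\}$ one extracts a tube $\bbt\times\wt U$; you spell out the tube-lemma step more explicitly than the paper does.

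Where you differ is in the second equality ${\rm dim\,span}(\call_N)={\rm dim}(\call)$. You obtain it by the sandwich $d={\rm dim\,span}(\call_N)\le{\rm dim}(\call)\le d$, which is entirely sufficient for the proposition as stated. The paper instead introduces the auxiliary families $\Lambda_N$ (initialized with $V_0$ in addition to $V_1,\ldots,V_m$) and the full Lie algebra $\Lambda={\rm LA}(V_0,\ldots,V_m)$, proves the dimension shift ${\rm dim\,span}(\Lambda_N)=1+{\rm dim\,span}(\call_N)$ and ${\rm dim}(\Lambda)=1+{\rm dim}(\call)$, and deduces the equality from maximality of ${\rm dim\,span}(\Lambda_N)=d+1$ at $(t,x^*)$. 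Your route is shorter and self-contained; the paper's detour through $\Lambda_N$ and $\Lambda$ is not needed for Proposition~\ref{prop:4} itself, but that dimension-shift formula is reused later (Remark~1 and the proof of Theorem~\ref{theo:6}), which explains why the authors develop it here.
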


\begin{proof}{\bf of Proposition \ref{prop:4}}
\quad 
1) In analogy to Definition \ref{def:hoerm1}, consider for fixed $N\in\bbn$ a set of vector fields $\Lambda_N$ by 'initial condition' $V_0, V_1, \ldots,V_m$ in $\Lambda_N$ and at most $N$ iteration steps 
$$
L\in\Lambda_N \;\Lra\; [L,V_0] , [L,V_1] , \ldots , [L,V_m] \in \Lambda_N  \;.   
$$
Clearly $\Lambda_N\supset\call_N$ for all $N$. By Proposition 1 in \cite{HH-TD} and its proof, 
\beao
{\rm dim\, span}(\Lambda_N(t,x))  &=&  1 \;+\; {\rm dim\, span}(\call_N(t,x))   \\
{\rm dim}(\Lambda(t,x))   &=&   1 \;+\; {\rm dim}(\call(t,x))
\eeao
for all $(t,x)\in \ov U$. This is a consequence of the difference in the initialization between $\Lambda_N$ and $\call_N$, 
since time dependence in equation (\ref{sde}) occurs in the drift only. 
\\
2) If a point $x^*\in U$  of full weak Hoermander dimension exists, fix $x^*$ and $N$ as in Definition \ref{def:hoerm2}. By~1), $\,{\rm span}(\Lambda_N(t,x^*))$ has maximal dimension $d+1$ for all $t\in\bbt$. As a consequence, at $(t,x^*)$ for all $t\in\bbt$,  $\,{\rm span}(\Lambda_N)$ coincides with the full Lie algebra $\Lambda={\rm LA}(V_0,V_1,\ldots,V_m)$ viewed as a vector space. Then again by 1), $\,{\rm span}(\call_N)$ coincides with $\call = {\rm LA}(\bigcup_N\call_N)$ at $(t,x^*)$ for all $t\in\bbt$,  and then also with ${\rm span}(\call_N^*)={\rm LA}(\call_N)$ .  
\\
3) Assume now that points of full weak Hoermander dimension exist in ${\rm int}(E)$, in the sense of Definition \ref{def:hoerm2}. For every $\nu$ fixed, sets $\{ (s,y)\in \bbt{\times}{\rm int}(E) : {\rm dim\, span} (\call_N)(s,y) > \nu \}$ are open. Since dimension $\,d\,$ in (\ref{fwHd}) is a maximal choice, the set $\{ (s,y)\in \bbt{\times}{\rm int}(E) : {\rm dim\, span}(\call_N)(s,y) = d \}$ is open.  
\end{proof}

\begin{rem} 
Proposition \ref{prop:4} and step 1) of its proof show that Definition {\bf (LWH)} which we have used in \cite{HH-TD} coincides with Definition \ref{def:hoerm2} above up to uniformity in time on the torus.  
\end{rem}

{\bf Example \ref{ex:1} continued}\quad 
{\it Consider again the 2-dim process (\ref{toyexample}) driven by 1-dim Brownian motion on the state space $E=\bbr\times[0,\infty)$: then the point $x^*:= (0,\frac23)$, attainable in a sense of deterministic control for the process (\ref{toyexample}) by Section \ref{sec:setting}, is of full weak Hoermander dimension if and only if the constant $\,c>0$ in equation (\ref{toyexample}) satisfies $c<\frac32$. Indeed, write $x=({\xi \atop \psi})$ for points in $E$, $\xi\in\bbr$, $\psi\ge 0$. With $\wt b(t,x) = ( {-c\, \sin^2(2\pi t)\, \xi \atop 1 - \frac32 \psi} )$ as in Section \ref{sec:setting} we have to consider  vector fields $\ov U \to \bbr^{1+d}$ 
$$
V_0 = \left(\begin{array}{l} 1 \\ -c\, \sin^2(2\pi t)\, \xi \\ 1 - \frac32 \psi \end{array}\right) \;,\; 
V_1 = \left(\begin{array}{l} 0 \\ 1 \\ \psi \end{array}\right) \;,\; 
[V_0,V_1] = \left(\begin{array}{l} 0 \\ c\, \sin^2(2\pi t)  \\ 1 \end{array}\right) \;.
$$
For $x^*=(0,\frac23) \in {\rm int}(E)$ and $t\in\bbt$ this simplifies to 
$$
V_0(t,x^*) = \left(\begin{array}{l} 1 \\ 0 \\ 0 \end{array}\right) \;,\; 
V_1(t,x^*) = \left(\begin{array}{l} 0 \\ 1 \\ \frac23 \end{array}\right) \;,\; 
[V_0,V_1](t,x^*) = \left(\begin{array}{l} 0 \\ c\, \sin^2(2\pi t)  \\ 1 \end{array}\right) \;. 
$$
Thus $V_1$ and $[V_0,V_1]$ are linearly independent at $(t,x^*)$ for all $t$ if and only if 
$c<\frac32$. In this case we have $\,{\rm dim\, span}(\call_N)=2=d\,$ on $\bbt\times\{x^*\}$ already for $N=1$. 
}

\subsection{Role of the attainable point}\label{sec:point}
If we specialize the control argument in Theorem \ref{theo:4bis} and focus only on time points which are multiples of the periodicity, relevant for the skeleton chain  $(X_{jT})_{j\in\bbn_0}$, we have the following: 

\begin{cor}\label{cor:2}
Grant Assumptions \ref{ass:1}, \ref{ass:2} a) and \ref{ass:3}. 
For arbitrarily small neighborhoods $U^*$ of the attainable point $x^*$ in ${\rm int}(E)$ and for arbitrary starting points $x\in E$, we have:    
$$
Q_x \left( \,X_{jT}\in U^* \right) \;\;>\;\; 0  \quad\mbox{for large enough $j\in\bbn$} \;. 
$$
\end{cor}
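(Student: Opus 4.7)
The plan is to combine Definition \ref{def:1} (attainability of $x^*$) with the support property provided by Theorem \ref{theo:4bis}(a). Let $x \in E$ and let $U^*$ be a neighborhood of $x^*$; shrinking if necessary, I may assume $\overline{U^*} \subset {\rm int}(E)$, and I fix a smaller open neighborhood $V^*$ of $x^*$ with $\overline{V^*} \subset U^*$. By Definition \ref{def:1} applied to $x$ and $x^*$, there exists a control $\dot{\tt h} \in L^2_{\rm loc}$ such that the solution $\varphi := \varphi^{({\tt h}, x, x^*)}$ of the associated control system satisfies $\varphi(t) \in E$ for every $t\ge 0$ and $\varphi(t) \to x^*$ as $t \to \infty$. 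Consequently, there is some $T_0 > 0$ with $\varphi(t) \in V^*$ for all $t \ge T_0$.

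Next, I would choose $j_0 \in \bbn$ large enough so that $j_0 T \ge T_0$. For every $j \ge j_0$, set $t_j := jT$. The restriction $(\varphi)_{|[0,t_j]}$ is a continuous path in $E$ ending in $V^*$, and I want to apply Theorem \ref{theo:4bis}(a) with horizon $t_0 = t_j$ to deduce that $(\varphi)_{|[0,t_j]} \in \overline{{\rm supp}(Q_x^{t_j})}$. Once this is granted, pick an open neighborhood $\mathcal{N}$ of $(\varphi)_{|[0,t_j]}$ in $C([0,t_j],E)$ (with the uniform topology) small enough that every $\omega \in \mathcal{N}$ satisfies $\omega(t_j) \in U^*$; this is possible because $\varphi(t_j) \in V^*$ and $\overline{V^*} \subset U^*$. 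The support property then gives $Q_x(X_{jT} \in U^*) \ge Q_x^{t_j}(\mathcal{N}) > 0$, which is the desired conclusion.

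The main obstacle is to verify the interior hypothesis of Theorem \ref{theo:4bis}(a), namely that $\varphi$ takes values in ${\rm int}(E)$ on $]0, \widetilde T]$ for some $\widetilde T > t_j$. For $t \ge T_0$ this is automatic since $\varphi(t) \in V^* \subset {\rm int}(E)$, so the only potential issue is boundary contact on the transient interval $(0, T_0]$. I would resolve this by a perturbation argument: the very fact that Assumption \ref{ass:1}(b') forces the diffusion to leave $\partial(E)\cap E$ immediately reflects the presence of a drift component transverse to the boundary, and the same geometric feature allows us to perturb $\dot{\tt h}$ by an arbitrarily small element of $L^2([0,T_0],\bbr^m)$ to obtain a new admissible control whose trajectory remains in ${\rm int}(E)$ on $]0, T_0]$. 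By continuity of the flow with respect to the control in the $L^2$ norm, the perturbed trajectory still lies in $V^*$ at time $t_j$, so the preceding argument applies verbatim. An alternative would be to extend Theorem \ref{theo:4bis}(a) directly to allow for isolated boundary contacts under Assumption \ref{ass:1}(b'), which is the form actually needed in the Hodgkin--Huxley application where the gating variables automatically enter $(0,1)$ as soon as the control is turned on.
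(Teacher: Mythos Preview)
Your approach is the same as the paper's: use Definition~\ref{def:1} to produce a control path $\varphi^{({\tt h},x,x^*)}$ that is eventually trapped in $U^*$, then invoke Theorem~\ref{theo:4bis}(a) at time horizons $t_0=jT$ large enough. The paper's proof is in fact terser than yours---it simply says ``apply Theorem~\ref{theo:4bis}'' and restricts to times $jT$ in the second half of the control interval.

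You are more scrupulous than the paper on one point: you correctly flag that Theorem~\ref{theo:4bis}(a) requires $\varphi(t)\in{\rm int}(E)$ on $]0,\wt T]$, whereas Definition~\ref{def:1} only guarantees $\varphi(t)\in E$. The paper's proof does not comment on this. Your proposed fix, however, is not quite solid as written. Assumption~\ref{ass:1}(b') is a statement about the \emph{stochastic} process, and deducing from it a ``drift component transverse to the boundary'' that lets you push the \emph{deterministic} control path into ${\rm int}(E)$ by a small $L^2$-perturbation of $\dot{\tt h}$ is not automatic: on components where $\sigma$ vanishes (as for the gating variables in the Hodgkin--Huxley model), perturbing $\dot{\tt h}$ has no direct effect, and the entrance property must come from the drift itself. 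Your alternative remark is closer to the truth: in the HH applications the gating-variable equations are pure ODEs whose coefficients force entry into $(0,1)$ for any $t>0$, so the control paths constructed in the paper lie in ${\rm int}(E)$ on $]0,\infty)$ anyway, and the hypothesis of Theorem~\ref{theo:4bis}(a) is met without further work. For the abstract corollary, the cleanest repair is simply to note that whenever Assumption~\ref{ass:3} is verified in practice (as in Propositions~\ref{prop:1} and~\ref{prop:2}), the explicit control already stays in ${\rm int}(E)$ for $t>0$; alternatively one strengthens Definition~\ref{def:1} to demand this.
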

\begin{proof}
To see this, choose $0<t_0<\infty$ arbitrarily large but finite and a control ${\tt h}\in{\tt H}$, the Cameron-Martin space associated to $t_0$, such that the control path  $\vph^{({\tt h}, x)}:[0,t_0]\to E$ connects initial value $x$ to some terminal value in $U^*$. We can do this such that $t\to \vph^{({\tt h}, x)}(t)$ stays in $U^*$ over the time interval $[\frac12 t_0,t_0]$, cf.\ Definition \ref{def:1}. Then apply Theorem \ref{theo:4bis} and restrict to times $t=jT$ in $[\frac12 t_0,t_0]$. 
\end{proof}

In the above result $j$ may depend on $x$. In order to overcome this problem we introduce for fixed $ p \in ]0, 1 [ $ the transition kernel
\beqq\label{defres}
R ( x, dy) =  ( 1 - p ) \sum_{ k \geq 1} p^{k-1} P_{0, k T } ( x, dy ) \;, 
\eeqq
corresponding to sampling the skeleton chain $(X_{kT})_{k\in\bbn_0}$ at 
independent geometric times.
Let $(Z_\ell)_{ \ell \in \bbn _0} $ be the corresponding Markov chain. In order to prove positive Harris recurrence of the skeleton chain 
$(X_{kT})_k$
it is sufficient to prove positive Harris recurrence of the sampled chain
$(Z_\ell)_\ell . $

The following result is a main ingredient of the proof of Theorem \ref{theo:1}. Its proof is given in Section \ref{sec:harris}.

\begin{lem}\label{lem:1.5}
Grant Assumptions \ref{ass:1}--\ref{ass:3}. With $x^*$ from Assumption \ref{ass:3} both attainable and of full weak Hoermander dimension, fix some open set $\wt U \subset{\rm int}(E)$ containing $x^*$ such that the weak Hoermander condition holds on ${\wt U}$. If $U^{**} \subset \wt U  $ is an arbitrary neighborhood of $x^* ,$ we can find some ball $C^*:=B_{\vep^*} (x^*)$  and some point $y^*\in U^{**}$ with $ B_{\vep^* } ( y^* ) \subset U^{**} $ such that writing $\nu^*$ for the uniform law on $B_{\vep^*} (y^*)$, Nummelin's  minorization condition holds (see \cite{Num-78} and \cite{Num-85}):
\begin{equation}\label{nummelin}
R(x', dy') \;\ge\; \al^*\; 1_{C^*}(x')\; \nu^*(dy') \quad,\quad x',y' \in E.
\end{equation}
\end{lem}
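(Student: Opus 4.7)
The plan is to reduce the minorization on $R$ to a pointwise density lower bound at a single level $k^*$ of the skeleton chain, since
$$
R(x',\cdot) \;\ge\; (1-p)\, p^{k^*-1}\, P_{0,k^*T}(x',\cdot)
$$
as measures for every $k^* \in \bbn$. So if I produce $k^*$, a target point $y^*$, and a radius $\vep^* > 0$ together with a uniform lower bound $p_{0,k^*T}(x', y') \ge \gamma > 0$ on $B_{\vep^*}(x^*) \times B_{\vep^*}(y^*)$, where $p_{0,k^*T}$ denotes the transition density, then $C^* := B_{\vep^*}(x^*)$, $\nu^*$ the uniform law on $B_{\vep^*}(y^*)$, and $\al^* := (1-p)\, p^{k^*-1}\, \gamma\, \mathrm{Leb}(B_{\vep^*}(y^*))$ will give \eqref{nummelin}.

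First I would apply Definition \ref{def:1} with $x = x^*$ to obtain a control ${\tt h}^* \in L^2_{\mathrm{loc}}$ whose associated path $\vph := \vph^{({\tt h}^*, x^*, x^*)}:[0,\infty) \to E$ starts at $x^*$, remains in $E$, and satisfies $\vph(t) \to x^*$ as $\tto$. Because $x^* \in \wt U$ and $\wt U$ is open, I pick $k^* \in \bbn$ so large that $\vph(k^*T) \in U^{**}$ and in addition $\vph([k^*T/2, k^*T]) \subset \wt U$; then set $y^* := \vph(k^*T)$ and choose $\vep^* > 0$ small enough that $B_{2\vep^*}(x^*) \subset \wt U$ and $B_{2\vep^*}(y^*) \subset \wt U \cap U^{**}$.

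Next I would invoke the local smooth-density construction of \cite{HH-TD}: by Proposition \ref{prop:4} the weak Hoermander condition holds at every point of $\wt U$, and the relevant portion of the control path $\vph$ lies in $\wt U$, so the transition kernel $P_{0,k^*T}(x,\cdot)$ admits a density $(x,y) \mapsto p_{0,k^*T}(x,y)$ that is jointly continuous on a neighborhood of $(x^*, y^*)$. Strict positivity $p_{0,k^*T}(x^*, y^*) > 0$ then follows from Theorem \ref{theo:4bis}: the restriction of ${\tt h}^*$ to $[0, k^*T]$ places $y^*$ in $\mathrm{supp}(Q_{x^*}^{k^*T})$, which in combination with local smoothness of the density prevents the latter from vanishing at $(x^*, y^*)$. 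Joint continuity and strict positivity then yield, after shrinking $\vep^*$ if necessary, a uniform lower bound $\gamma > 0$ on $B_{\vep^*}(x^*) \times B_{\vep^*}(y^*)$, closing the argument by the first paragraph.

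The hard part is the third paragraph: extracting joint continuity and strict positivity of the density at $(x^*, y^*)$. This requires gluing together three ingredients -- Proposition \ref{prop:4} to spread the Hoermander rank from the isolated point $x^*$ to the open set $\wt U$; the smooth-density apparatus of \cite{HH-TD} applied along the tail of the control trajectory $\vph$, which we arranged to live entirely in $\wt U$; and Theorem \ref{theo:4bis} to convert the deterministic reachability of $y^*$ into the support statement $y^* \in \mathrm{supp}(Q_{x^*}^{k^*T})$ that, together with smoothness, forces nonvanishing of the density. The remaining steps (reduction to a single $k^*$ via $R$, and translation of the density bound into the $\nu^*$-minorization) are bookkeeping.
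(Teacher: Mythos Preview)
Your reduction to a single level $P_{0,k^*T}$ is legitimate, but the core of the argument---``support membership plus smoothness of the density forces positivity at the specific point $y^*=\vph(k^*T)$''---is a genuine gap. From Theorem~\ref{theo:4bis} you do get that $y^*$ lies in the support of $P_{0,k^*T}(x^*,\cdot)$, but a continuous density can vanish at an interior point of its own support (think of a density behaving like $|y-y^*|^2$ near $y^*$). All the support statement yields is $\int_{B_\varepsilon(y^*)} p_{0,k^*T}(x^*,y)\,dy>0$ for every $\varepsilon>0$, hence positivity of the density \emph{somewhere} in each such ball---not at the predetermined endpoint of your control path. The paper avoids this by running the argument in the opposite direction: Corollary~\ref{cor:4bis}~b) gives $R(x^*,U^{**})>0$, hence $\int_{U^{**}} r(x^*,y)\,dy>0$, and one simply \emph{selects} any $y^*\in U^{**}$ at which $r(x^*,y^*)>0$. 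No constructive identification of $y^*$ is attempted, and none is needed.

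A second issue: you invoke joint continuity of $(x,y)\mapsto p_{0,k^*T}(x,y)$ near $(x^*,y^*)$, justified by Hoermander along your control path. But the process started at $x$ may well leave $\wt U$ between times $0$ and $k^*T$, so Hoermander along a single control trajectory does not control the full law. The available tool---Lemma~\ref{lem:2}, transferred to $R$ in Corollary~\ref{cor:4bis}---gives $C^\infty$ in $y$ on $\wt U$ but only \emph{lower semicontinuity} in $x$ on $E$, and this is what the paper uses to extend from $r(x^*,y^*)>0$ to small balls around $x^*$ and $y^*$.

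Finally, the paper's proof contains a second half you omit entirely: the verification that $C^*=B_{\vep^*}(x^*)$ is visited infinitely often by the sampled chain, so that \eqref{nummelin} is a genuine Nummelin (M1) condition rather than a vacuous inequality involving a set the chain never sees. This step uses the Lyapunov function of Assumption~\ref{ass:2}~b) to return infinitely often to the compact $K$, lower semicontinuity of $x\mapsto R(x,C^*)$ to get $\inf_{x\in K}R(x,C^*)>0$, and Borel--Cantelli.
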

\vskip0.5cm

We would like to stress that Lemma \ref{lem:1.5} provides a substantial improvement over the type of result which we obtained in \cite{OU-HH}. We worked there with a finite set of splitting conditions, see 
formulae (16)--(17) and Theorem 3 of \cite{OU-HH}, 
and could establish the existence of a finite collection of disjoint Harris sets. In contrast to this, our route now works for a broader class of processes and establishes the existence of a unique Harris set.

\subsection{The analytic case}\label{sec:control}

Throughout this section, we grant Assumptions \ref{ass:1}, \ref{ass:2} a) and \ref{ass:5}.   
In order to control $\ov X = ((i_T(t),X_t))_{t\ge 0}$ of Section \ref{sec:setting},   
we view the functions $\vph^{({\tt h}, x)}$ appearing in  Theorem \ref{theo:4bis} as $(1{+}d)$-dimensional functions 
$$
\left[0,s({\tt h}, x)\right) \;\ni\;\; t \;\lra\;  
\bar\vph^{({\tt h}, x)}(t) \;:=\;  \left(  i_T(t) ,  \vph^{({\tt h}, x)}(t)  \right) 
$$
taking values in $\ov E = \bbt{\times}E$ where $\bbt=[0,T]$ is the torus, with initial value $\left( 0 , x \right)\in \ov E$. 
If ${\tt h}\,$ is admissible as in (\ref{newcontrolsystems}) with constant components $\,\dot{\tt h}^\ell(\cdot)\equiv\gamma_{\ell,r}\,$, $1\le\ell\le m$, on  intervals $]s_{r-1},s_r]$,  the trajectory $\,t\to\bar\vph^{({\tt h}, x)}(t)\,$ moves between times $s_{r-1}$ and $s_r$ along the vector field 
$$ 
V_0 \;+\; \gamma_{1,r} V_1 + \ldots + \gamma_{m,r} V_m   
$$
where $V_0$ and $V_\ell$, $1\le\ell\le m$, have been defined in Section \ref{sec:hoer}. In this case $\ov\vph^{({\tt h}, x)}$ is a piecewise integral curve  $\,t\to\bar\vph^{({\tt h}, x)}(t)\,$ of ${\rm span}(V_0,V_1,\ldots, V_m)$ in terms of Sussmann \cite{Sus-73}.

\begin{lem}\label{lem:1}  
Grant Assumptions \ref{ass:1}, \ref{ass:2} a) and \ref{ass:5}. Then for $0<t_0<\infty$ arbitrary but fixed, the dimension of the Lie algebra ${\rm LA}(V_0,V_1,\ldots, V_m)$ remains constant along  
$$
\bar\vph^{({\tt h}, x)} = \left(  i_T(\cdot) ,  \vph^{({\tt h}, x)} \right) \; : \; [0,t_0]\;\lra\; \ov E
$$
if $\vph^{({\tt h}, x)}$ belongs to the set~(\ref{newcontrolsystems}), i.e.\ if $\,{\tt h} \in {\tt H}$ is admissible and satisfies  $\,{\tt s}({\tt h},x)>t_0\, .$ 
\end{lem}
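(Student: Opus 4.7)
The plan is to invoke the analytic version of Sussmann's orbit theorem (Nagano's theorem) applied to the vector fields $V_0,V_1,\ldots,V_m$ on the analytic manifold $\ov U=\bbt{\times} U$. This is precisely the point at which Assumption \ref{ass:5} becomes indispensable.

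First I observe that by Assumptions \ref{ass:2} a) and \ref{ass:5} the coefficients of (\ref{sde}) are real analytic and $T$-periodic, so the vector fields $V_0,V_1,\ldots,V_m$ defined in Section \ref{sec:hoer} are real analytic on the analytic manifold $\ov U$. Setting $\calf:=\{V_0,V_1,\ldots,V_m\}$, the orbit theorem (see Sussmann \cite{Sus-73}, analytic case) asserts that the orbit $\calo(p)$ through any $p\in \ov U$ is an immersed analytic submanifold of $\ov U$, and that its tangent space at every $q\in \calo(p)$ coincides exactly with
$$
\Lambda(q) \;=\; \{\,Y(q):Y\in\mathrm{LA}(\calf)\,\}\;.
$$
As an immediate consequence, $\dim\Lambda(q)$ is constant as $q$ ranges over a single orbit.

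Next I would show that the whole trajectory $\ov\vph^{(\mathtt{h},x)}([0,t_0])$ lies in one orbit. Since $\mathtt{h}$ is admissible with partition $0=s_0<s_1<\ldots<s_\nu=t_0$ and constant values $\gamma^{(r)}=(\gamma_{1,r},\ldots,\gamma_{m,r})$ on $]s_{r-1},s_r]$, a direct reading of (\ref{controlsystem}) and the definitions of $V_0,V_\ell$ in Section \ref{sec:hoer} shows that on each such subinterval the curve $\ov\vph^{(\mathtt{h},x)}$ is an integral curve of the analytic vector field
$$
Y_r \;:=\; V_0 \;+\; \sum_{\ell=1}^m \gamma_{\ell,r}\,V_\ell \;\in\; \Lambda\;.
$$
Because $Y_r(q)\in\Lambda(q)=T_q\calo(q)$ at every point, the flow of $Y_r$ preserves each orbit; concatenating these flows across the partition yields $\ov\vph^{(\mathtt{h},x)}(t)\in\calo((0,x))$ for all $t\in[0,t_0]$, where the hypothesis $s(\mathtt{h},x)>t_0$ guarantees that the trajectory remains in $\ov E\subset\ov U$ and hence is well defined. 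Combining the two steps gives the constancy of $\dim\Lambda(\ov\vph^{(\mathtt{h},x)}(t))$ on $[0,t_0]$.

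The main obstacle — and the reason this lemma is placed in the analytic section rather than together with Proposition \ref{prop:4} — is that the result genuinely requires analyticity: in the purely $\calc^\infty$ framework of Assumption \ref{ass:1} d), Sussmann's smooth orbit theorem only guarantees $\Lambda(q)\subset T_q\calo(q)$, with possibly strict inclusion, so $\dim\Lambda$ may jump when one moves along an integral curve of a field in $\Lambda$. Under Assumption \ref{ass:5} this pathology is ruled out by Nagano's equality of the two spaces, which is what makes the argument above go through.
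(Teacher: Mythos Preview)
Your proof is correct and follows essentially the same route as the paper: both arguments invoke Sussmann's orbit theorem in the analytic case (Nagano's theorem) to conclude that the tangent space of the orbit through $(0,x)$ coincides with $\Lambda(q)$ at every point, and then observe that the admissible control trajectory $\ov\vph^{(\mathtt{h},x)}$ is a concatenation of integral curves of fields $V_0+\sum_\ell\gamma_{\ell,r}V_\ell$ and hence stays in a single orbit. The only cosmetic difference is that the paper takes $\cals=\mathrm{span}(V_0,\ldots,V_m)$ as the generating family (so the trajectory is \emph{by definition} in one $\cals$-orbit) and then quotes Sussmann's ``locally of finite type'' criterion for analytic families, whereas you take $\calf=\{V_0,\ldots,V_m\}$ and use tangency of $Y_r$ to the orbit; both lead to the same conclusion.
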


\begin{proof}
Write for short $\cals := {\rm span}(V_0,V_1,\ldots, V_m)$,  $\;\cals^*$ for the closure of $\cals$ under Lie brackets, and following Sussman \cite{Sus-73}, we write $\Delta_{\cals^*}= {\rm LA}(V_0,V_1,\ldots, V_m)$ for the linear space spanned by $\cals^*$. 
\\
1) Sussmann \cite{Sus-73} defines $\cals$-orbits in $\ov E$ as equivalence classes of points in $\ov E$ under the following equivalence relation: $(s,y)$ and $(s',y')$ are equivalent if there is a piecewise integral curve of $\cals$ connecting $(s,y)$ and $(s',y')$, forward or backward in time. If we focus on a control function as in~(\ref{newcontrolsystems}) 
$$
\mbox{$\vph^{({\tt h}, x)} : [0,t_0]\to E$ with $x\in E$ and  $\,{\tt h}\in{\tt H}\,$ admissible and such that $s({\tt h}, x)>t_0$}
$$
we see that all points in the graph of $\bar\vph^{({\tt h}, x)}$ are equivalent and thus belong to the same  
$\cals$-orbit in $\ov E$. 
\\
2) By Assumption \ref{ass:5}, $\,\cals$ is a set of real analytic vector fields on $\ov U=\bbt\times U$. Thus by Sussmann \cite{Sus-73} Section~9, the closure $\,\cals^*$ of $\cals$ under Lie brackets is "locally of finite type". 
By Theorem 8.1 in \cite{Sus-73} and the lines following its proof, this property implies that $\Delta_{\cals^*}$ is $\cals^*$-invariant,  
i.e.\ $\Delta_{\cals^*}$ coincides with $\Delta^{\cals^*}_{\cals^*}=\calp_{\cals^*}$ in notation of \cite{Sus-73}. 
The dimension of $\calp_{\cals^*}$ remaining constant along $\cals^*$-orbits (cf.\ pp.\ $177^{9-11}$ and  $176_{10-6}$ in \cite{Sus-73}), 
it follows that the dimension of $\,\Delta_{\cals^*}$ remains constant along $\cals^*$-orbits.  
In particular,   
$$
s \;\;\lra\;\; {\rm dim}\, \Delta_{\cals^*} ( \bar\vph^{({\tt h}, x)}(s) ) \;=\; {\rm dim}\; {\rm LA}(V_0,V_1,\ldots, V_m) \left(  i_T(s) , \vph^{({\tt h}, x)}(s)  \right)
$$
remains constant on $[0,t_0]$ for all $x\in E$ and all $\tt h$ as in (\ref{newcontrolsystems}).  
\end{proof}

We strengthen that the assertion of Lemma \ref{lem:1} hinges on  Assumption \ref{ass:5}, and  
would not hold true under our standing $\calc^\infty$-assumption (a counterexample is given in \cite{Sus-73} p.\ $177$). Now we exploit Assumption \ref{ass:3}: combining the results of Section \ref{sec:hoer} with Lemma \ref{lem:1}, we can prove  

\begin{theo}\label{theo:6} 
Under Assumptions \ref{ass:1}, \ref{ass:2} a), \ref{ass:3} and \ref{ass:5}, all points $\,x\,$ in the state space $E$ are of full weak Hoermander dimension. 
\end{theo}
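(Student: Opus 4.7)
The plan is to combine Lemma~\ref{lem:1} (constancy of the dimension of the Lie algebra $\Lambda = {\rm LA}(V_0, V_1, \ldots, V_m)$ along admissible control paths, in the analytic case) with Proposition~\ref{prop:4} (which provides an open neighborhood $\wt U \subset {\rm int}(E)$ of $x^*$ where $\dim \Lambda = d + 1$ holds throughout $\bbt \times \wt U$) and Assumption~\ref{ass:3} (attainability of $x^*$ from every $x \in E$). Recall from step~1 of the proof of Proposition~\ref{prop:4} that full weak Hoermander dimension at $x$ amounts to $\dim \Lambda(s, x) = d + 1$ for every $s \in \bbt$.

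The first step is to establish $\dim \Lambda(0, x) = d + 1$ for every $x \in E$. Fix $x \in E$. By Definition~\ref{def:1}, there is an $L^2_{\rm loc}$ control $\dot{\tt h}$ driving $\vph^{({\tt h}, x)}$ from $x$ toward $x^*$ with $\vph^{({\tt h}, x)}(t) \in E$ for all $t \geq 0$. Since $\wt U$ is open, pick $t_* > T$ with $\vph^{({\tt h}, x)}(t_*) \in \wt U$. I would then approximate $\dot{\tt h}|_{[0, t_*]}$ in $L^2$ by a piecewise-constant admissible control $\dot{\tt h}_*$ whose trajectory $\vph^{({\tt h}_*, x)}$ remains in $E$ on $[0, t_*]$ and lands in $\wt U$ at $t_*$; this is standard using density of step functions in $L^2([0, t_*], \bbr^m)$, Gronwall stability of the control ODE, and openness of $\wt U$ inside ${\rm int}(E)$. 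Lemma~\ref{lem:1} then says that $t \mapsto \dim \Lambda(i_T(t), \vph^{({\tt h}_*, x)}(t))$ is constant on $[0, t_*]$; evaluation at $t_*$ gives $d + 1$ by Proposition~\ref{prop:4}, hence $\dim \Lambda(0, x) = d + 1$.

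The second step is to upgrade this to $\dim \Lambda(s, x) = d + 1$ for every $(s, x) \in \bbt \times E$. The crucial observation is that the constancy in Lemma~\ref{lem:1} holds at \emph{every} intermediate point of the admissible path, not just the two endpoints. Since $t_* > T$, the torus-time $i_T(t)$ sweeps out all of $\bbt$ as $t$ runs through $[0, t_*]$. Combined with the freedom to vary the starting point $x$ in $E$ and with Sussmann's orbit theorem (which underlies Lemma~\ref{lem:1}) in the analytic setting of Assumption~\ref{ass:5}, this implies that the $\cals$-orbit through any point of $\bbt \times \wt U$, with $\cals = {\rm span}(V_0, V_1, \ldots, V_m)$, is an immersed submanifold of full dimension $d + 1$ which already contains $\{0\} \times E$; being open in $\bbt \times E$ and the latter being connected (by the convex exhaustion of Assumption~\ref{ass:1}~a)), this orbit must coincide with $\bbt \times E$, and the conclusion follows from the constancy of $\dim \Lambda$ on orbits.

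The hard part will be this final coverage argument. Although the set $\{\dim \Lambda = d + 1\}$ is open by the semi-continuity exploited in the proof of Proposition~\ref{prop:4}, its complement need not be, so connectedness of $\bbt \times E$ alone is not enough. What is expected to close the gap is the analytic structure of Assumption~\ref{ass:5}: by Sussmann's orbit theorem the lower-dimensional orbits are confined to a proper analytic subset of $\bbt \times {\rm int}(E)$, and the attainability of $x^*$ from \emph{every} $x \in E$ forces every point of $\{0\} \times E$ to lie on an admissible path reaching the maximal-dimensional orbit, which in turn excludes any such analytic exceptional set. A secondary technical point is ensuring that the approximation $\dot{\tt h}_*$ of the first step stays in $E$ on the whole of $[0, t_*]$, for which the localized support machinery developed for Theorem~\ref{theo:4bis} together with the boundary structure built into Assumption~\ref{ass:1} b)--c) is needed.
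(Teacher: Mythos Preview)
Your overall strategy—combine Lemma~\ref{lem:1} with Proposition~\ref{prop:4} and the attainability part of Assumption~\ref{ass:3}, translating full weak Hoermander dimension into $\dim\Lambda=d+1$ via step~1 of the proof of Proposition~\ref{prop:4}—is exactly the paper's strategy, and your first step reproduces the paper's argument for initial points of the form $(0,x)$.

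Where you diverge, and where a genuine gap appears, is your second step. You correctly observe that the constancy in Lemma~\ref{lem:1} holds at every intermediate point of the path and that $i_T(t)$ sweeps out all of $\bbt$ when $t_*>T$; but at an intermediate time $t$ with $i_T(t)=s$ the spatial coordinate is $\vph^{({\tt h}_*,x)}(t)$, not $x$, so this does \emph{not} give $\dim\Lambda(s,x)=d+1$. Your subsequent orbit-openness-connectedness argument then runs into exactly the difficulty you flag: the maximal orbit is open, but its complement need not be, so connectedness of $\bbt\times E$ is insufficient. Your proposed repair via ``analytic exceptional sets'' would need you to show that $\{\dim\Lambda<d+1\}$ is not merely closed but empty, and for that you end up needing precisely what the paper uses directly.

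The paper disposes of the case $(s,x)$ with $s\neq 0$ in one sentence: ``The same can be done with initial values $(s,x)\in\ov E$.'' That is, it simply reruns the control argument of your step~1 starting the lifted path at $(s,x)$ rather than at $(0,x)$: for every $(s,x)\in\ov E$ one constructs an admissible ${\tt h}$ driving the time-shifted control system $d\vph=\wt b(s+\cdot,\vph)\,dt+\si(\vph)\dot{\tt h}\,dt$ from $x$ into $U^*\subset\wt U$ at some large time $t_0$, and then applies Lemma~\ref{lem:1} to the path $t\mapsto(i_T(s+t),\vph(t))$ on $[0,t_0]$. Since the drift is $T$-periodic and Definition~\ref{def:1}'s attainability construction does not depend on the phase of the periodic drift at the initial instant, the same control recipe works with $\wt b(s+\cdot,\cdot)$ in place of $\wt b(\cdot,\cdot)$. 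So your orbit detour is unnecessary: just redo your step~1 from the general initial value $(s,x)$.
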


\begin{proof}\quad 
According to Assumption \ref{ass:3}, fix $x^*$ in ${\rm int}(E)$ such that $x^*$ is attainable in a sense of deterministic control, see Definition \ref{def:1}, and of full weak Hoermander dimension, see Definition~\ref{def:hoerm2}.  
\\ 
1) Consider the sets of vector fields $\,\Lambda={\rm LA}(V_0,V_1,\ldots, V_m)$ and $\,\call={\rm LA}(\bigcup_N\call_N)$ on $\ov U=\bbt\times U$,  with notations of Section \ref{sec:hoer}. We have  $\,{\rm dim}(\Lambda) = {\rm dim}(\call)+1\,$ on $\ov U$ 
by step 1) of the proof of Proposition~\ref{prop:4}.  
\\
2)  Proposition \ref{prop:4} shows that $\,{\rm dim}(\call)(s,x)=d\,$ for all $(s,x) \in \bbt\times\wt U$ where $\wt U\subset {\rm int}(E)$ is open and contains $\{x^*\}$. Thus from 1), 
$
{\rm dim}\, {\rm LA}(V_0,V_1,\ldots, V_m)(s,x) = d+1
$
for all $(s,x) \in \bbt\times\wt U$. 
\\
3) By Definition \ref{def:1},  
for all starting points $x$ in $E$ and arbitrarily small neighborhoods $U^*$ of $x^*$ in ${\rm int}(E)$, we can find some time horizon $0<t_0<\infty$ and some control ${\tt h}\in{\tt H}$, the Cameron-Martin space associated to $t_0$, such that the control path  $\bar\vph^{({\tt h}, x)}:[0,t_0]\to \ov E$ connects an initial value $(0,x)$ to some terminal value in $\bbt\times U^*$. Here we have degrees of freedom in choosing $t_0$ and $U^*$, hence the same property remains true in restriction to admissible controls $\tt h$. The same can be done with initial values~$(s,x)\in \ov E$. 
\\ 
4) Along the control paths  $\,\bar\vph^{({\tt h}, x)}:[0,t_0]\to \ov E\,$ of 3) which drive $(s,x)$ into $\bbt\times U^*$, with  ${\tt h}\in {\tt H}$  admissible and $t_0$ large enough, $\,{\rm dim}\, {\rm LA}(V_0,V_1,\ldots, V_m)$ remains constant by Lemma \ref{lem:1}. This establishes $\,{\rm dim}\, {\rm LA}(V_0,V_1,\ldots, V_m)(s,x) = 1+d\,$ for all $(s,x)\in \ov E$. Using again 1), we have  $\,{\rm dim}(\call) (s,x) = d\,$  for all $(s,x)\in \ov E$.  
\end{proof}

\section{Proofs for Section \ref{sec:results} 
} 

\subsection{Proof of Theorem \ref{theo:1}. Local Lebesgue densities.
}\label{sec:harris}

We now give the proofs for Lemma \ref{lem:1.5}  
(which relies on Lemma \ref{lem:2} and Corollary \ref{cor:4bis} below)  
and Theorem~1. We quote the following from \cite{HH-TD}.

\begin{lem}\label{lem:2}
Under Assumptions \ref{ass:1}--\ref{ass:3} the following holds true. There exists an open neighborhood $\wt U  $ of $x^*$  $in\; {\rm int}(E) $ such that for $0<t<\infty$, transition probabilities $P_{0,t}(x,dy)$ of the process (\ref{sde}) admit Lebesgue densities $p_{0,t}(x,y)$ locally on $ y \in \wt U  $ with the following properties: $\wt U  \ni y\to p_{0,t}(x,y)$ is  
infinitely differentiable   
when $x\in E$ is fixed, and $x\to p_{0,t}(x,y)$ is lower semicontinuous on $E$ when $y\in  \wt U $ is fixed. Moreover, for all measurable sets  $A \subset { \wt U }$ and all $t$, the mapping $x \to P_{0,t}(x,A)$ is lower semicontinuous on $E$.
\end{lem}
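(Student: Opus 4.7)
The plan is to combine Proposition \ref{prop:4} with a localized Malliavin calculus argument on an open neighborhood where the weak Hoermander condition holds uniformly, and then derive the semicontinuity claims by Fatou-type arguments. First, by Proposition \ref{prop:4} and Assumption \ref{ass:3}, there exists an open set $\wt U\subset{\rm int}(E)$ containing $x^*$ such that $\,{\rm dim\, span}(\call_N)(s,y)=d\,$ uniformly for all $(s,y)\in \bbt\times \wt U$. This supplies locally uniform non-degeneracy of the Malliavin covariance matrix of $X_t$ whenever the endpoint lies in $\wt U$, and the entire proof reduces to a local analysis over $\wt U$.

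For existence and $C^\infty$ regularity of $y\mapsto p_{0,t}(x,y)$ on $\wt U$, I would apply a local form of Malliavin's theorem (Nualart / Kusuoka--Stroock): test the law of $X_t$ against smooth compactly supported functions inside $\wt U$, perform integration by parts on Wiener space, and read off smoothness of the density in $y$ from the local invertibility of the reduced Malliavin matrix. The possible degeneracy of the Malliavin matrix elsewhere in $E$ is irrelevant for the local density at an endpoint in $\wt U$, and Assumption \ref{ass:1} d) provides the $C^\infty$ regularity of coefficients on the open set $U$ needed to carry out the Malliavin calculus.

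For lower semicontinuity of $x\mapsto p_{0,t}(x,y)$ with $y\in \wt U$ fixed, the idea is to write the density via the Malliavin integration-by-parts formula as $p_{0,t}(x,y)=E_x[\mathbf{1}_{\{X_t\in R_y\}}H_t]$ for a suitable `rectangle' $R_y$ based at $y$ and a Skorokhod-type weight $H_t$, and then to realize the right-hand side as a countable supremum of continuous functions of $x$ by truncating and mollifying $H_t$. Continuity of these approximants in $x$ uses standard continuous dependence of the stochastic flow $x\mapsto X_t^x$ on the initial condition, a consequence of the smoothness of the coefficients; the supremum structure then transfers lower semicontinuity to the density. The final statement on $x\mapsto P_{0,t}(x,A)$ for measurable $A\subset \wt U$ is then immediate by Fatou's lemma: $\liminf_{x_n\to x}\int_A p_{0,t}(x_n,y)\,dy\ge \int_A \liminf_{x_n\to x} p_{0,t}(x_n,y)\,dy\ge \int_A p_{0,t}(x,y)\,dy$.

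The main obstacle is the lower semicontinuity step: pointwise limits of lower semicontinuous functions are not generally lower semicontinuous, so simply averaging $x\mapsto P_{0,t}(x,B_r(y))/|B_r(y)|$ over shrinking balls (each of which is l.s.c.\ in $x$ by weak continuity of $Q_x^t$ in $x$ and Portmanteau) is not enough. The Malliavin representation above is what makes the argument work, and this is the place where the uniform weak Hoermander dimension guaranteed by Proposition \ref{prop:4} is essential; the paper defers the detailed computation to \cite{HH-TD}, which is the route I would follow.
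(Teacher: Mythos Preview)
Your proposal is correct and aligned with the paper's own proof, which simply verifies that Assumptions \ref{ass:1}--\ref{ass:3} together with Proposition \ref{prop:4} yield the hypotheses {\bf (H1)}, {\bf (H2)}, {\bf (LWH)} of \cite{HH-TD} and then cites Theorems 1 and 2 there for the density existence, $C^\infty$ regularity in $y$, and lower semicontinuity in $x$ (the last claim on $x\mapsto P_{0,t}(x,A)$ being extracted from the proof of Theorem 2 in \cite{HH-TD}). Your sketch of the localized Malliavin argument is precisely what \cite{HH-TD} carries out, and you correctly identify the role of Proposition \ref{prop:4} in securing uniform weak Hoermander dimension on $\wt U$; as you yourself note in the final paragraph, this is the route the paper takes.
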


\vskip-0.5cm
\begin{proof} 
Our Assumptions \ref{ass:1}--\ref{ass:3} and Proposition \ref{prop:4} imply {\bf (H1)}, {\bf (H2)} and {\bf (LWH)} of \cite{HH-TD}; the assertion is from \cite{HH-TD}, Theorems 1 and 2. The proof of Theorem 2 in \cite{HH-TD} shows moreover that for all measurable sets  $A \subset  \wt U $ and all $t$, the mapping $x \to P_{0,t}(x,A)$ is lower semicontinuous on $E$.  
\end{proof}

\begin{cor}\label{cor:4bis}
Under the assumptions of Lemma \ref{lem:2}, with $\wt U$ as there, the following holds true. \\
a) The transition kernel $R$ defined in (\ref{defres}) admits transition densities $ r (x, y ) $ locally in $ y \in \wt U  $ with the following properties: $\wt U  \ni y\to r (x,y)$ is  lower semicontinuous when $x\in E$ is fixed, and $x\to r(x,y)$ is lower semicontinuous  on $E$ when $y\in \wt U $ is fixed. Moreover, for any measurable set $A \subset \wt U , $ $x \to R( x, A ) $ is lower semicontinuous on $E$. \\
b) For arbitrarily small neighborhoods $ U^* \subset \wt U  $ of the attainable point $x^* $ of Assumption \ref{ass:3} and for arbitrary starting points $x \in E,$ we have that $ R ( x, U^* ) > 0 .$ \\
c) The sampled chain $ (Z_\ell)_{ \ell \in \bbn _0} $ is a $T-$chain in the sense of Meyn and Tweedie \cite{MeyTwe-92}.
\end{cor}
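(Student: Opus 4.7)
For part (a), my plan is to realize the density of $R$ on $\wt U$ as the series of the local densities supplied by Lemma~\ref{lem:2}. Since $kT>0$ for every $k\geq 1$, Lemma~\ref{lem:2} yields a local density $p_{0,kT}(x,y)$ of $P_{0,kT}(x,dy)$ on $y\in\wt U$; I would set
$$
r(x,y) \;:=\; (1-p)\sum_{k\geq 1} p^{k-1}\,p_{0,kT}(x,y), \qquad y\in\wt U,\ x\in E.
$$
Both semicontinuity statements then reduce to the elementary fact that a countable sum of nonnegative lower semicontinuous functions is lower semicontinuous, being the increasing pointwise limit of its (lower semicontinuous) finite partial sums. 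For fixed $x$, each term is smooth on $\wt U$, hence $y\mapsto r(x,y)$ is lower semicontinuous on $\wt U$; for fixed $y\in\wt U$, each term is lower semicontinuous in $x\in E$ by Lemma~\ref{lem:2}, hence so is $x\mapsto r(x,y)$. The very same argument, applied directly to the nonnegative series $(1-p)\sum_{k\geq 1}p^{k-1}P_{0,kT}(x,A)$ for $A\subset\wt U$ measurable, yields lower semicontinuity of $x\mapsto R(x,A)$, using the last assertion of Lemma~\ref{lem:2}.

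For part (b), I would use Corollary~\ref{cor:2}: given $x\in E$ and an arbitrarily small neighborhood $U^*\subset\wt U$ of $x^*$, Corollary~\ref{cor:2} provides some index $j=j(x,U^*)\in\bbn$ with $P_{0,jT}(x,U^*)>0$. All terms in the series defining $R(x,U^*)$ being nonnegative, retaining only the $k=j$ term gives
$$
R(x,U^*) \;\geq\; (1-p)\,p^{j-1}\,P_{0,jT}(x,U^*) \;>\; 0 .
$$

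For part (c), my plan is to verify the definition of a $T$-chain from \cite{MeyTwe-92} by taking the sampling distribution concentrated at $1$ (so the sampled kernel is $R$ itself) and exhibiting the minorizing substochastic kernel
$$
T(x,A) \;:=\; \int_{A\cap\wt U} r(x,y)\,dy, \qquad x\in E,\ A\in\cale.
$$
The inequality $T(x,A)\leq R(x,A)$ holds by construction, and part (b) applied with $U^*=\wt U$ gives $T(x,E)=R(x,\wt U)>0$ for every $x\in E$. Lower semicontinuity of $x\mapsto T(x,A)$ for arbitrary measurable $A$ follows from Fatou's lemma applied to the nonnegative integrand $r(\cdot,y)\,1_{A\cap\wt U}(y)$, invoking the lower semicontinuity of $r(\cdot,y)$ for each fixed $y\in\wt U$ established in part~(a).

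No genuine obstacle appears: the corollary is essentially a routine packaging of Lemma~\ref{lem:2} together with Corollary~\ref{cor:2}. The one point that deserves a little care, and on which the three parts all rest, is the stability of lower semicontinuity under countable increasing sums (handled by monotone convergence applied to nonnegative partial sums) and under integration over the $y$-variable on $\wt U$ (handled by Fatou).
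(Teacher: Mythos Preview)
Your proposal is correct and follows essentially the same route as the paper's proof, which is very terse: part (a) is called a ``direct consequence of Lemma~\ref{lem:2} and of the definition of $R$'', part (b) ``follows from Corollary~\ref{cor:2}'', and for part (c) the paper simply observes that $R(x,\cdot\cap U^*)$ is a continuous component of $R$ which is non-trivial by (b). Your write-up fills in precisely the details that justify these claims. One small redundancy: in part (c) you invoke Fatou to obtain lower semicontinuity of $x\mapsto T(x,A)$, but since $T(x,A)=R(x,A\cap\wt U)$ with $A\cap\wt U\subset\wt U$ measurable, this already follows directly from the ``Moreover'' clause you established in part (a).
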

\vskip-0.5cm
\begin{proof}
Assertion a) is a direct consequence of Lemma \ref{lem:2} and of the definition of $R.$ b) follows from Corollary \ref{cor:2}. In order to prove c), observe that $R (x, \cdot \cap U^* ) $ is a continuous component of $ R $ in the sense of \cite{MeyTwe-92}, page 548, due to a), which is non-trivial due to b).
\end{proof}

\vskip0.3cm
{\bf Proof of Lemma \ref{lem:1.5} } With $\wt U$ from Lemma \ref{lem:2} and Corollary \ref{cor:4bis}, we consider an arbitrarily small neigbourhood $U^{**}\subset\wt U$ of the point $x^*$ from Assumption \ref{ass:3}. \\
1) We show (\ref{nummelin}) as a mere inequality. By a) and b) of Corollary \ref{cor:4bis},
$$ \int_{U^{**} } r(x^*, y ) dy > 0 .$$ 
As a consequence, there exists $y^* \in U^{**} $ such that $ r (x^* , y^* ) > 0.$ By lower semicontinuity, this can be extended to small balls $ B_{\vep^*} (x^* ) $ and $ B_{\vep^* } ( y^* ) , $ where we can assume w.l.o.g. that $ B_{\vep^* } ( y^* )  \subset U^{**} .$\\ 
2) It remains to check that $C^*:=B_{\vep^*} (x^*)$ is visited infinitely often by the sampled chain: then  $C^*$ will be `small' in the sense of Nummelin \cite{Num-85}, so (\ref{nummelin}) will be Nummelin's minorization condition (M1) for the sampled chain. 
Let $K\subset E$ denote the compact appearing in the Lyapunov condition of Assumption \ref{ass:2}~b). 
Then for arbitrary choice of a starting point $x\in E$, the skeleton chain $(X_{kT})_k$ visits $K$ infinitely often, almost surely. The sampled chain $(Z_\ell)_\ell$, being `sampled' out of the skeleton chain by tossing independent coins at times $kT$ with success probability $1{-}p$,  
inherits this property.  
Thanks to Corollary \ref{cor:4bis} we know that $ x \to R( x, C^*) $ is lower semicontinuous and positive for any fixed $x.$ As a consequence, since a lower semicontinuous function has a minimum on each compact set, we obtain
$$ \inf_{ x \in K} R(x, C^* ) > 0 .$$  Then Borel-Cantelli implies that the sampled chain $ (Z_\ell)_{ \ell \in \bbn_0}$ visits $C^* $ infinitely often, almost surely, for arbitrary choice of a starting point $x\in E$. {\hfill $\bullet$ \vspace{0.25cm}}

\begin{proof}{\bf of Theorem \ref{theo:1}} \quad 
It is sufficient to prove a), or equivalently, to prove positive Harris recurrence of the sampled chain $ (Z_\ell)_{ \ell \in \bbn_0}$,  see \cite{Rev-84}. Having proved Lemma \ref{lem:1.5} for the sampled chain, the Nummelin splitting condition (\ref{nummelin}) implies Harris recurrence of the sampled chain (Nummelin \cite{Num-78}, \cite{Num-85}). 
It remains to show that recurrence is necessarily positive recurrence. The sampled chain being a Harris recurrent $T-$chain
by Corollary \ref{cor:4bis}, Theorem 3.2 a) of Meyn and Tweedie \cite{MeyTwe-92} shows that every compact set is a `petite set'. This holds in particular for the compact $K$ in the Lyapunov condition of Assumption \ref{ass:2} b). Hence Assumption \ref{ass:2} b) corresponds to condition (DD2) of \cite{MeyTwe-92}, and Theorem 4.6 of \cite{MeyTwe-92} finishes the proof.  
\end{proof}

\subsection{Proof of Proposition \ref{cor:1bis} in the analytic case. Global Lebesgue densities.}\label{sec:analyticharris}

In case where the coefficients of \eqref{sde} are real analytic functions 
we obtain stronger results.  

\begin{lem}\label{lem:2bis}
Under Assumptions \ref{ass:1}--\ref{ass:3} and \ref{ass:5} the following holds true.\\
a) For $0<t<\infty$, transition probabilities $P_{0,t}(x,dy)$ of the process (\ref{sde}) admit Lebesgue densities $p_{0,t}(x,y)$ with the following properties: $y\to p_{0,t}(x,y)$ is   
infinitely differentiable  
on ${\rm int}(E)$ when $x\in E$ is fixed, and $x\to p_{0,t}(x,y)$ is lower semicontinuous on $E$ when $y\in {\rm int}(E)$ is fixed. \\
b) $P_{0, t } $ is a strong Feller transition semigroup, i.e.\ $ P_{0, t } f \in {\cal C}_0 $ if $ f $ is a bounded $\cale -$measurable function. 
\end{lem}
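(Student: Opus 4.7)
The plan is to extend Lemma \ref{lem:2} from a neighborhood $\wt U$ of $x^*$ to all of ${\rm int}(E)$ by invoking Theorem \ref{theo:6}, and then to derive the strong Feller property from the regularity of the density via a Fatou-type argument based on preservation of total mass.

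For part a), I first apply Theorem \ref{theo:6}: under the additional analyticity Assumption \ref{ass:5}, every point $y_0 \in E$ is of full weak Hoermander dimension. Proposition \ref{prop:4}, read with $y_0 \in {\rm int}(E)$ in place of $x^*$, then produces an open neighborhood $\wt U_{y_0} \subset {\rm int}(E)$ on which $\dim {\rm span}(\call_N) \equiv d$. The proof of Lemma \ref{lem:2} --- which rests on Theorems~1 and~2 of \cite{HH-TD} --- uses the weak Hoermander condition only locally around a point of maximal dimension, so it applies verbatim around $y_0$: it yields a density $p_{0,t}(x,\cdot)$ on $\wt U_{y_0}$ which is $\calc^\infty$ in the forward variable for fixed $x \in E$ and lower semicontinuous in $x$ for fixed $y \in \wt U_{y_0}$. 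Since ${\rm int}(E) = \bigcup_{y_0 \in {\rm int}(E)} \wt U_{y_0}$ and the local densities agree almost everywhere on overlaps by uniqueness of Lebesgue densities, they patch into a globally defined density $p_{0,t}(x,y)$ on $E \times {\rm int}(E)$ carrying the stated regularity.

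For part b), the key fact is that $P_{0,t}(x, {\rm int}(E)) = 1$ for all $x \in E$ and $t > 0$, a consequence of Assumption \ref{ass:1} b)--b') which forces the process to enter ${\rm int}(E)$ immediately from any boundary starting point. Hence $\int_{{\rm int}(E)} p_{0,t}(x,y)\, dy = 1$ for all $x \in E$. Given bounded measurable $f$ with $M := \|f\|_\infty$, both functions $M \pm f$ are non-negative and bounded, and Fatou's lemma combined with lower semicontinuity of $x \mapsto p_{0,t}(x,y)$ yields
\[
\liminf_{x' \to x} P_{0,t}(M \pm f)(x') \;\ge\; P_{0,t}(M \pm f)(x).
\]
Since $P_{0,t} M \equiv M$, these two inequalities pinch into $\limsup_{x' \to x} P_{0,t}f(x') \le P_{0,t}f(x) \le \liminf_{x' \to x} P_{0,t}f(x')$, which is continuity of $P_{0,t}f$, i.e.\ the strong Feller property.

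The main obstacle is the patching step in part a): one must verify that the proof of Theorems~1 and~2 of \cite{HH-TD} uses only the local weak Hoermander condition at the distinguished point in the forward variable and does not implicitly exploit its attainability. Once this is clear --- attainability having served in Lemma \ref{lem:2} only to single out the one point where (LWH) was postulated, a role now played by Theorem \ref{theo:6} at every point of $E$ --- the Fatou/Scheffé-type argument underlying part b) is short and follows purely from lower semicontinuity together with conservation of probability mass on ${\rm int}(E)$.
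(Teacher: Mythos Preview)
Your proof is correct and follows essentially the same route as the paper: invoke Theorem~\ref{theo:6} to upgrade the weak Hoermander condition from a neighborhood of $x^*$ to all of ${\rm int}(E)$, then apply Lemma~\ref{lem:2} globally, and finally deduce strong Feller via the $\|f\|_\infty \pm f$ trick (which the paper attributes to Ichihara--Kunita). The only cosmetic differences are that the paper states part~a) in one line as ``$\wt U = {\rm int}(E)$ in Lemma~\ref{lem:2}'' rather than phrasing it as a patching of local neighborhoods, and that for part~b) the paper first passes through lower semicontinuity of $x\mapsto P_{0,t}(x,A)$ for sets $A$ (already recorded in Lemma~\ref{lem:2}) rather than invoking Fatou explicitly on the density --- but these are the same argument.
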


\vskip-0.5cm
\begin{proof}
By Theorem \ref{theo:6}, the weak Hoermander condition holds on the full state space $E$, thus $\wt U = {\rm int}(E)$ in Lemma \ref{lem:1.5} and in Lemma \ref{lem:2}. In particular, for every measurable set $ A\subset \wt U$  and for all $t$, the mapping $x \to P_{0,t}(x,A)$ is lower semicontinuous on $E$. The boundary $\partial E \cap E$ being entrance only, by Assumption 1 b), the last assertion extends to all $A\in\cale$. As a consequence, for any positive and bounded $\cale -$measurable function $ f ,$ the mapping $ x \to  P_{0,t}f (x) $ is lower semicontinuous on $E$. We now use the following argument of Ichihara and Kunita \cite{Ich-Ku}, proof of lemma 5.1, to deduce the strong Feller property of the semigroup. Putting $\tilde f := \|f\|_{\infty } - f , $ we conclude that $  P_{0,t}\tilde f $ and hence $- P_{0,t}f $ are lower semicontinuous. As a consequence, $P_{0,t}f $ must be continuous. 
\end{proof}\\

\begin{proof}{\bf of Proposition \ref{cor:1bis} }  
By Lemma \ref{lem:2bis}, $\mu=\mu P_{0,T}$ and all $\mu P_{0,s}$ admit Lebesgue densities.  
\end{proof}

\section{Proofs for Sections \ref{sec:HH} and \ref{sec:limit theorems}}\label{sec:5}

We consider CIR-HH and OU-HH with all notations of Section \ref{sec:HH}, and give proofs for Propositions \ref{prop:1} and \ref{prop:2} and Theorems \ref{theo:cham} and \ref{theo:4}.

\begin{proof}{\bf of Proposition \ref{prop:1}}\quad  
1) We have $E = \bbr \times [0,1]^3 \times (0,\infty)$. Since all coefficients of CIR-HH are analytic on $U\supset E$,  Assumption \ref{ass:5} holds.  
By choice of $S(\cdot)$ in (\ref{defCIR}), Assumption \ref{ass:2}~a) is satisfied. Let us now assume that $2a>1$, select a nonnegative $\calc^2$-function $\psi$ defined on $(-\infty,\infty)$ satisfying $\psi(y):=|y|$ when $|y|>1$, and prove that $V:E\to [1,\infty)$ defined for all $x=(x^1, x^2, x^3, x^4, x^5) \in E$ by 
\beqq\label{CIRlyapunov}
V(x) \;:=\; 1 + \log^2(x^5) + (x^5)^2 + \psi(x^1),
\eeqq
is a Lyapunov function for the skeleton chain $(X_{kT})_{k\in\bbn_0}$ in the sense of Assumption \ref{ass:2}~b). First, the fifth component $(\xi_t)_{t\ge 0}$ in system (\ref{xiHH_general})+(\ref{defCIR}), taken separately, has the Markov generator $\wt\call_t$ given by
$$
\frac{\partial}{\partial t} \;+\; (a + S(t) - x^5) \frac{\partial}{\partial x^5} \;+\; x^5 \frac{\partial^2}{\partial (x^5)^2}. 
$$
Because of $2a>1$ and $S(\cdot)\ge 0$, we find positive constants $\wt c_1, \wt c_2$ such that the function $\wt V:(0,\infty)\to [1,\infty)$ defined by $\wt V(x^5):= 1 + \log^2(x^5) + (x^5)^2$ satisfies 
$$
\wt\call_t\, \wt V(x^5) \;\le\; - \wt c_1\, \wt V(x^5)  \;+\; \wt c_2 \quad\mbox{for all $t\ge 0$ and all $x^5\in (0,\infty)$}. 
$$
Second, the 5d process $(X_t)_{t\ge 0}$  has the Markov generator $\call_t$ given by
$$
\frac{\partial}{\partial t} + \sum_{i=1}^5 b^i(t,x) \frac{\partial}{\partial x_i} + \frac12 \sum_{i,j=1}^5 (\si \si^{\!\top})^{i,j}(x) \frac{\partial^2}{\partial x_i \partial x_j}   
$$
with notations of (\ref{sde}), (\ref{xiHH_general}) and (\ref{defCIR}). By definition in (\ref{function_F}) there are positive constants $d_1<d_2$ such that $F(v,n,m,h)$ takes values between $d_1 v$ and $d_2 v$ when $v\to+\infty$, and between  $d_2 v$ and $d_1 v$ when $v\to-\infty$, uniformly over $n,m,h$ in $[0,1]^3$. This results in positive constants $c_1, c_2$ such that $V:E\to [1,\infty)$ defined by (\ref{CIRlyapunov}) has the property  
\beqq\label{ausgangsbasis-lyapunov}
\call_t\, V(x) \;<\; - c_1\, V(x)  \;+\; c_2    \quad\mbox{for all $t\ge 0$ and all $x\in E$}. 
\eeqq
Localizing with $\tau_m:=\inf\{ t: |V(X_t)|>m \}$ as $m\to\infty$, we deduce from (\ref{ausgangsbasis-lyapunov}) 
$$
E_x\left( V( X_T ) \right)  \;\;\le\;\; V(x)\, e^{-c_1 T} \;+\; \int_0^T c_2\, e^{-c_1 t}\, dt 
$$
(a well-known argument, cf.\ (2.2)-(2.4) in Mattingly, Stuart and Higham \cite{MatStuHig-02}) and rewrite this as    
$$
P_{0,T} V \;\le\; \la V \,+\, \delta  \quad\mbox{on $E$},  
$$
for some constants $0<\la<1$ and $\delta>0$.  The function $V:E\to[1,\infty)$ being `bowl-shaped', this is an essentially stronger assertion than  Assumption \ref{ass:2} b).
So far, we have checked Assumptions \ref{ass:1}, \ref{ass:2} and \ref{ass:5}. 
\\
2) We now turn  
to the key Assumption \ref{ass:3}. We have to specify a point $x^*$ in ${\rm int}(E)$ of full weak Hoermander dimension and attainable in a sense of deterministic control. Our candidate in ${\rm int}(E)$ is 
$$ 
x^* = ( v^* , n^* , m^*, h^* , 1) := (v^0, n_\infty (v^0), m_\infty (v^0) , h_\infty (v^0 ) , 1) \;. 
$$
We have shown in 
\cite{HH-TD} (see Theorem 3, Proposition 6 and Section 5.4 there) 
that any point whose first four coordinates coincide with $(v^0, n_\infty (v^0), m_\infty (v^0) , h_\infty (v^0 ))$ is of full weak Hoermander dimension. 
It remains to prove that $x^*$ is attainable. Control systems $t\to\vph(t)$ make use of Stratonovich drift. For CIR-HH, Stratonovich correction (\ref{strato}) affects only the first and the fifth component:   
$$
\wt b^i(t,x) = b^i(t,x) - \frac12 \si^5(x)\frac{\partial\si^i}{\partial x^5}(x) 
\;=\; b^i(t,x) - \frac14 \;,\; i=1,5 \quad,\quad \wt b^i(t,x)=b^i(t,x)\;,\; i=2,3,4 \;. 
$$
Hence, for every choice of a starting point $x\in E$, we have to construct an $\dot {\tt h} \in L^2_{\rm loc}$ which drives the 5d trajectory $\vph=\vph^{({\tt h},x,x^*)}$ satisfying   
\beqq\label{CIR-HH-controlled}
\left\{\begin{array}{l}
\frac{d}{ds }  \vph^1(s)   \;=\;  \frac{d}{ds} \vph^5(s) \;- F( \vph^1(s), \vph^2(s), \vph^3(s), \vph^4(s) ) \\
\frac{d}{ds }  \vph^2(s)  \;=\;  \, \al_n(\vph^1(s))\,(1-\vph^2(s))  \;-\; \beta_n(\vph^1(s))\, \vph^2(s)   \\
\frac{d}{ds }  \vph^3(s)  \;=\;  \, \al_m(\vph^1(s))\,(1-\vph^3(s))  \;-\; \beta_m(\vph^1(s))\, \vph^3(s)   \\
\frac{d}{ds }  \vph^4(s)  \;=\;  \, \al_h(\vph^1(s))\,(1-\vph^4(s))  \;-\; \beta_h(\vph^1(s))\, \vph^4(s)   \\
\frac{d}{ds} \vph^5(s) \;=\; [\,a - \frac14 + S(s) - \vph^5(s)\,] \;+\; \sqrt{\vph^5(t)\,}\; \dot{\tt h}(s)  
\end{array}\right. 
\eeqq 
from $\,x= \vph(0)\,$ to $\,x^* = \lim\limits_{t\to\infty}\vph(t) \,$. For ease of notation we set $\,\wt a \;:=\; a\;-\;\frac14. $ 

We start with an intuitive argument which is not yet a rigorous one. Consider $x=(x^1, x^2, x^3, x^4, \zeta) \in E$ where $\zeta\in (0,\infty)$ is arbitrary.  
In order to push the fifth component of (\ref{CIR-HH-controlled}) with initial value $\zeta$ towards the desired limit $1$,  
select a $\calc^\infty$ function $\gamma_5^{(\zeta,1)}\,$ with the properties 
\beqq\label{requirements_control}
\left\{\begin{array}{l}
\gamma_5^{(\zeta,1)}(0) = \zeta \quad,\quad \gamma_5^{(\zeta,1)}(t) > 0 \;\;\mbox{for all $t\ge 0$ and all $\zeta>0$} \\
\gamma_5^{(\zeta,1)}(t) = 1 \quad\mbox{for all $t\ge |\zeta-1|+1$} \\
| \frac{d}{dt}\gamma_5^{(\zeta,1)}(t) | \;\le\; 1 \quad\mbox{for all $t\ge 0$ and all $\zeta>0$} 
\end{array}\right.
\eeqq 
(e.g.,  apply a $\calc^\infty$ smoothing kernel with support $[-\frac12,\frac12]$ to $t\to (\zeta-t)\vee 1$ if $\zeta>1$, and to $t\to (\zeta+t)\wedge 1$ if $0<\zeta<1$).  
Prescribing $\vph^5(t) \;:=\; \gamma_5^{(\zeta,1)}(t)$ for all $t\ge 0$, the fifth equation in~(\ref{CIR-HH-controlled}) 
$$
\frac{d}{dt}\vph^5(t) \;=\; [\, \wt a + S(t) - \vph^5(t)\,] \;+\; \sqrt{\vph^5(t)\,}\, \dot{\tt h}(t) 
$$
with $\wt a := a-\frac14$ determines a one-dimensional control $\,\tt h\,$ by
$$
\dot{\tt h}(t) \;:=\; \frac{ \frac{d}{dt}\vph^5(t) \,-\, [\, \wt a + S(t) - \vph^5(t)\,] }{\sqrt{\vph^5(t)\,} }
\;=\; \frac{ \frac{d}{dt}\gamma_5^{(\zeta,1)}(t) \,-\, [\, \wt a + S(t) - \gamma_5^{(\zeta,1)}(t)\,] }{\sqrt{\gamma_5^{(\zeta,1)}(t)\,} }, \quad\quad t\ge 0. 
$$
We have $\,\dot{\tt h} \in L^2_{\rm loc}$ by  (\ref{requirements_control}). With this choice for $\,\dot{\tt h}\,$, 
the coordinates $(\vph^1,\vph^2,\vph^3,\vph^4)$ of the control system (\ref{CIR-HH-controlled}) form a deterministic Hodgkin-Huxley system (\ref{HH_det}) where constant input $\,c\, $ is replaced by $\frac{d}{dt}\gamma_5^{(\zeta,1)}(t)$.  
Since $\frac{d}{dt}\gamma_5^{(\zeta,1)}(t)$ is in absolute value $\le 1$ and vanishes for $t > |\zeta-1|+1$, this deterministic Hodgkin-Huxley system will make before and after time $|\zeta-1|+1$ at most a finite number of spikes and then spiral into the equilibrium point $(v^0, n_\infty(v^0), m_\infty(v^0), h_\infty(v^0))$ corresponding to constant input $c=0$ which is stable. 
It is here that our argument is not rigorous: for the deterministic HH with constant input $c=0$, we are not able to prove that indeed {\em every} starting point $(v,n,m,h)\in\bbr{\times}[0,1]^3$ belongs to the basin of attraction of $(v^0, n_\infty(v^0), m_\infty(v^0), h_\infty(v^0))$. 
\\
3) In the remaining part of this subsection we construct a different control which leads to a rigorous proof. This construction, much more lengthy than the previous heuristic argument, is organized in parts I) to VI) below.\footnote{
In fact, if we are not able to assert that every starting point $(v,n,m,h)\in\bbr{\times}[0,1]^3$ belongs to the deterministic basin of attraction of $(v^0, n_\infty(v^0), m_\infty(v^0), h_\infty(v^0))$, for the deterministic HH with constant input $c=0$, our control below together with the support theorem amounts to prove that arbitrarily small neighbourhoods of the point $x^*=(v^0, n_\infty(v^0), m_\infty(v^0), h_\infty(v^0),1)$ in $E$ can be attained with positive probability from every starting point $(v,n,m,h,\zeta)\in E$, at least after some long amount of time, in the CIR-HH process $X$ defined by (\ref{xiHH_general})+(\ref{defCIR}).
}
Before going into the details, let us briefly describe the main points of our construction which proceeds roughly in three steps. 
The first step uses properties of the deterministic HH with constant input $c = 0$ in order to force the first coordinate $ \vph^1 $ in (\ref{CIR-HH-controlled}) into a "good" interval from which it will never escape again. It turns out that $ (- 12, 120)$ is such an interval. In a second step, 
we force the fifth component $ \vph^5$ to attain sufficiently high values guaranteeing that it will never touch $0$ during our construction. In this way, we ensure that $ \vph^1 $ and $\vph^5$ take values in a "good" subset of the state space. At this point we start the main part of our construction, the third step, 
in which we first force $ \vph^1$ into its limit value $v^*,$ during a fixed time period, and then use the fact that for fixed $ \vph^1$, the gating variables $ \vph^i, \, i\in\{2,3,4\}$, converge exponentially fast to their respective equilibria. We now give the details of the construction.
\\
Part I) We collect some auxiliary facts. For $F(v, n , m , h)$ defined in (\ref{function_F}) we have 
\begin{equation}\label{def-f}
\sup_{ -12 < v < 120 \,,\, 0\le n,m,h\le 1 } |F (v, n , m , h)| \;\le\; f  
\end{equation}
for some constant $f$, and (\ref{function_F}) allows for bounds 
\beqq\label{bounds_F_weitdraussen}
F(v,0,0,0) = \inf_{n,m,h}F(v,n,m,h) \;\mbox{if $v\ge 120$} \;, \; F(v,0,0,0) = -\inf_{n,m,h}|F(v,n,m,h)| \;\mbox{if $v\le -12$} . 
\eeqq
Fix $v \in (-12, 120) $, let $m, n, h \in [0,1]$ be arbitrary, let $t \to (\bar n_t (v) ,\bar m_t (v), \bar h_t(v) )$ denote the solution to 
\begin{equation}\label{eq:nmh}
\left\{\begin{array}{l}
\frac{d}{ds }  \bar n_s (v)  \;=\;  \, \al_n(v)\,(1-\bar n_s(v))  \;-\; \beta_n(v)\, \bar n_s(v)   \\ 
\frac{d}{ds }  \bar m_s (v)  \;=\;  \, \al_m(v)\,(1-\bar m_s (v))  \;-\; \beta_m(v)\, \bar m_s  (v)  \\
\frac{d}{ds }  \bar h_s (v) \;=\; \, \al_h(v)\,(1-\bar h_s (v) )  \;-\; \beta_h(v)\, \bar h_s  (v) 
\end{array}\right.
\end{equation}
with initial value $(n,m,h)$ at $t=0$. Whenever there is no ambiguity about the value $v$ which we keep constant we shall write $(\bar n_t,\bar m_t,\bar h_t)$ for short. We see from (\ref{expl_expr_gating_var})+(\ref{nmhinfini}) and (\ref{function_F}) that there are positive constants $C$ and $ \lambda $ such that uniformly in $ (v,n,m,h) \in (-12, 120 )\times [0,1]^3$   
\begin{equation}\label{eq:lambda}
\max\left\{\, |\bar n_{s} - n_\infty(v)| \,,\, |\bar m_{s} - m_\infty(v)|  \,,\, |\bar h_{s} - h_\infty(v)| \, \right\} \;\le\; Ce^{-\lambda s} 
\end{equation}
\begin{equation}\label{eq:lambdabis}
\left| F\left( v, \bar n_{s}, \bar m_{s}, \bar h_{s} \right) - F ( v, n_\infty(v), m_\infty(v) , h_\infty(v)\right| \;\le\; Ce^{-\lambda s} 
\end{equation}
for all $s\ge 0$. 
\\
Part II) Fix any starting value $x=(v,n,m,h,\zeta)$ in $E$. We now show that during some first phase of our control it is possible to keep $\vph^5$ fixed equal to $\zeta$ and force $\vph^1$ into the interval $(-12,120)$. 
\\
i) Consider the deterministic HH system (\ref{HH_det}) with constant input $c=0$ which corresponds to $\frac{d}{ds} \vph^5=0$ in (\ref{CIR-HH-controlled}).  
As long as its first variable $\vph^1$ takes values outside $(-12,120)$, the term $\,-F(\vph^1,\cdot,\cdot,\cdot)$ in the first equation of (\ref{CIR-HH-controlled}) represents a back-driving force,  and the bounds (\ref{bounds_F_weitdraussen}) allow to `decouple' $\vph^1$ from the gating variables $ (\vph^2,\vph^3,\vph^4) $  in $[0,1]^3$.  
Consequently   
\beqq\label{def-t_1} 
t_1 \;:=\; \inf \{ t :   \vph^1(t) \in (-12, 120 ) \} \;<\; \infty.
\eeqq
Inspection of the fifth equation of (\ref{CIR-HH-controlled}) shows that keeping $\vph^5$ equal to $\zeta$ on $[0,t_1]$ amounts to prescribe 
$$ 
\dot{\tt h}(s) \;:=\; \frac{-[ \wt a + S(s) - \zeta]}{\sqrt{\zeta\,}} 
$$
on this interval which is meaningful since $\zeta>0$. 
\\
ii) Once arrived within $(-12, 120 )$, the deterministic HH with constant input $c=0$ will never be able to leave this interval again. Actually a stronger result was proved in \cite{End-12}, proposition 1.8: a deterministic HH with initial first component in $(-12, 120 )$ and time-dependent input $c(t)\,dt$ such that $-6.78 < c(t) < 32.82$ for all $t$ will never be able to leave this interval. This is again a consequence of  
bounds (\ref{bounds_F_weitdraussen}) at the endpoints $v=120$ and $v=-12$ of this interval.  
\\
Part III) During some second phase $]t_1,t_2]$ of our control, with $t_2$ specified in (\ref{def-t_2}) below, we force $\vph^5$ beyond some suitably large positive threshold, and we take advantage of Part II)~ii) to make sure that $\vph^1$ does not leave the interval $(-12,120)$ during this phase. 
Choose  $K$ large enough such that
\begin{equation}\label{eq:K}
( K -120 - 1 )( 1 + f )\;-\; \frac{C}{\lambda } \;\;>\;\; 1  
\end{equation}
with $C$, $ \lambda $ and $f$ the constants of (\ref{eq:lambda}), (\ref{eq:lambdabis}) and (\ref{def-f}). Now we prescribe 
$$ 
\frac{d}{ds }\vph^5(s) \;\equiv\; 1  \quad\mbox{or equivalently}\quad \dot{\tt h}(s) \;:=\; \frac{1 - [\wt a + S(s) - \vph^5(s)]}{\sqrt{\vph^5(s)\,}} \quad,\quad t_1< s      
$$
and stop this second phase of control at time 
\beqq\label{def-t_2} 
t_2 \;:=\; \inf \{ s :\, \vph^5(s)  \ge  K (f+1) \,\} \;<\; \infty  \;. 
\eeqq
Note that on $[t_1, t_2]$ the first four components of (\ref{CIR-HH-controlled}) coincide with the solution of a deterministic HH system with constant input $c=1$. Its first component lies in $(-12, 120 )$ at time $t_1$. Part II)~ii)  applies and ensures that $\vph^1$ remains in $(-12, 120 )$ on $[t_1, t_2]$.   
\\
Part IV)  Let us set $\vph(t_2):=(v',n',m',h',\zeta')$. By construction $v'=\vph^1(t_2)  \in (-12, 120)$ and $\zeta'= \vph^5(t_2) \ge  K (f+1) $.  
Next, during some time interval $]t_2,t_3]$ with $t_3$ to be specified in (\ref{def-t_3}) below,  
our control acts on $\vph^1$ and directs it towards the desired limit $v^*=v^0$; 
we will have to make sure that $\vph^5$ remains positive  during this phase to prevent $\vph$ from leaving the state space $E$. 
\\
i)  In analogy  to (\ref{requirements_control}), we choose  a  $\calc^\infty$-function $\,\gamma_1  =  \gamma_1^{(v',v^*)}: [t_2,\infty)\to\bbr$ satisfying 
\beqq\label{prescription-gamma1}
\left\{\begin{array}{l}
\gamma_1(t_2)\;=\; v' \\
|\frac{d \gamma_1}{ds}|(s) \;\le\; 1 \quad\mbox{on $ [t_2, t_2 + |v'-v^*| + 1)$}   \\
\gamma_1(s) \equiv v^* \;\;\mbox{on $[t_2 + |v'-v^*| +1,\infty)$} \;. 
\end{array}\right.
\eeqq 
As a consequence of (\ref{equilibrium-2}), $\,v^*=v^0$ is strictly positive, hence we have $|v'-v^*|\le 120$  for all $v'$ under consideration in this step.  
For the present phase of control we prescribe 
$$
\vph^1 (s) \;:=\; \gamma_1^{(v',v^*)}(s)   \quad\mbox{for}\quad s\ge t_2 \;. 
$$
Plugging this into the first and fifth equations in (\ref{CIR-HH-controlled}), we specify the control as  
\beqq\label{prescription-ctrl-gamma1}
\dot{\tt h} \;:=\; \frac{ \frac{\partial \gamma_1}{\partial s} + F(\vph^1,\vph^2,\vph^3,\vph^4) - [\wt a + S - \vph^5] }{\sqrt{\vph^5\,}} \quad\mbox{for}\quad s\ge t_2 \;. 
\eeqq
Clearly our choice (\ref{prescription-gamma1})+(\ref{prescription-ctrl-gamma1}) drives $\vph^1$ into the desired limit $v^*=v^0$ and fixes it there, and clearly $\vph^1$ does not leave the interval $(-12,120)$. 
\\
ii) We have to ensure that $\vph^5$ remains positive. With $\,\dot{\tt h}\,$ specified by (\ref{prescription-ctrl-gamma1}), the first equation of (\ref{CIR-HH-controlled}) combined with (\ref{prescription-gamma1})+(\ref{def-f}) shows  
$$
\frac{d}{ds }  \vph^5(s)  \;\geq\; -1 - f  \quad\mbox{for}\quad s\ge t_2.  
$$
Therefore at time $\wt s:=t_2+120+1$ by (\ref{def-t_2})
$$ 
\vph^5(\wt s) \;\geq\; \vph^5(t_2)  - (1+f)(120+1)  \;\geq\; (K-120-1)(1+ f).  
$$ 
Since $t_2+120+1$ is an upper bound for all times $t_2 + |v'-v^*| +1$ which can be considered in (\ref{prescription-gamma1}), the choice of $K$ in (\ref{eq:K}) provides us with the following intermediate result:  
\beqq\label{zwischenbilanz}
\vph^1(\wt s) \;=\; v^* = v^0 \;\;\mbox{and}\;\; \vph^5(\wt s)  \;>\; 1+\frac{C}{\lambda } \quad\mbox{at time $\wt s=t_2+120+1$} \;. 
\eeqq 
However $\wt s$ arising here is not yet the time at which we will stop the present phase of control. 
\\
iii)  On the time interval  $[\tilde s, +\infty[$,  $\,\vph^1$ remains fixed in $v^*=v^0$  by choice in (\ref{prescription-gamma1}), the gating variables $(\vph^2, \vph^3, \vph^4)$  converge exponentially fast towards  $(n_\infty(v^0), m_\infty(v^0), h_\infty(v^0)) = (n^*, m^*, h^*)$ by (\ref{eq:lambda}), whereas the first equation in (\ref{CIR-HH-controlled}) yields  
$$
\frac{d}{ds }  \vph^5(s) \;=\; \frac{d}{ds }   \vph^1(s) + F ( \vph^1_s, \ldots, \vph^4_s) \;=\; 0 \;+\; F ( \vph^1_s, \ldots, \vph^4_s)\quad\mbox{for}\quad s\ge \tilde s.  
$$
The crucial point in our present construction  (in contrast to the controls which we did consider in \cite{OU-HH}) is that $F\left(v^0, n_\infty(v^0), m_\infty(v^0), h_\infty(v^0)\right)$ vanishes, see (\ref{F_infinity})--(\ref{equilibrium-2}). This fact allows in combination with (\ref{eq:lambdabis}) for a finite integral 
\beqq\label{finitelimitphi5}
\vph^5(\infty) \;:=\; \vph^5(\tilde s) + \int_{\tilde s}^\infty F\left( v^0, \bar n_u (v^0 ), \bar m_u (v^0 ) , \bar h_u (v^0 ) \right) du    
\eeqq 
together with a bound 
$$ 
\frac{d}{ds }   \vph^5(s) \;\geq\;  - C e^{ - \lambda s} \quad\mbox{ for $\; s \geq \tilde s$}. 
$$
Thanks to (\ref{zwischenbilanz}) both last assertions imply   
for $s\ge\wt s$ 
\begin{equation}\label{eq:above}
\vph^5(s) \;\geq\;  \vph^5(\tilde s)  - C \int_{\tilde s}^s e^{- \lambda t } dt 
\;\;\geq\;\; \vph^5(\tilde s)  - \frac{C}{\lambda} 
\;\;>\;\; (1+\frac{C}{\lambda})-\frac{C}{\lambda}
\;=\; 1 \;. 
\end{equation}  
Thus $\vph$ does not leave the state space $E$ for $ s \geq \tilde s$ and we have  $\vph^5(\infty)  \;>\; 1$. 
\\
iv) If we recapitulate the arguments above, we see that the control defined so far drives $\vph$ towards
\beqq\label{1stlimit}
\left( v^*, n^*, m^*, h^*, \vph^5(\infty)  \right) = 
\left( v^0, n_\infty(v^0), m_\infty(v^0), h_\infty(v^0), \vph^5(\infty) \right) \quad\mbox{where}\quad \vph^5(\infty) \;>\; 1 
\eeqq
as $t\to \infty$, whose first four components coincide with those of the desired limit $x^*=\left( v^*, n^*, m^*, h^*, 1  \right)$ defined in (\ref{CIRcandidate}) above. However our argument is not yet finished, for two reasons. First $\vph^5(\infty)$ may be far from $1$ which is the fifth component of $x^*$. Second the integral contributing to (\ref{finitelimitphi5}) keeps trace of the initial conditions, in contrast to what was required in (\ref{CIRcandidate}) --or in Definition \ref{def:1}-- for the limit point $x^*$ for $\vph$ as $t\to\infty$. 
Therefore we stop the control constructed above at time 
\beqq\label{def-t_3}
t_3 \;:=\; \inf\left\{ s > \tilde s: \vph^2(s)\in B_\vep(n^*) \,,\, \vph^3(s)\in B_\vep(m^*) \,,\,\vph^4(s)\in B_\vep(h^*)  \right\}  
\eeqq 
where $\vep>0$ is arbitrarily small but fixed. It remains to arrange for the fifth component and to get rid of last traces of the initial conditions. Introduce the notation  
$$
\vph(t_3) \;=:\; ( v^0 ,n'',m'',h'',\zeta'')  \quad\mbox{where}\quad  \zeta'' > 1 \;,\; n''\in B_\vep(n^*) \;,\; m''\in B_\vep(m^*)\;,\; h''\in B_\vep(h^*) \;. 
$$
Part V) During some time interval $]t_3,t_4]$ with $\,t_4\,$ to be defined in (\ref{def-t_4}) below, we drive $\vph^5$ towards $v^*=1$ without altering much $\vph^1, \ldots, \vph^4$.  For this, we prescribe the first component of $\vph$ as 
$$ 
\vph^1(s) \;:=\; \gamma_2(s) \quad\mbox{for $\;s\in [t_3,\infty)\;$} 
$$
where $\,\gamma_2\,$ is a $\calc^\infty$ function supported by $[t_3,\infty)$ with the properties  
\beqq\label{prescription-gamma2}
\left\{ \begin{array}{l}
\gamma_2(t_3) \;=\; v^0   \\
- 10^{-k} \leq \frac{\partial \gamma_2}{\partial s}(s) \leq 0  \quad \mbox{for  $t_3\le s\le t_3+1$} \\
\gamma_2(s) \;=\; v^0 - 10^{-k} \quad \mbox{for $s\ge t_3+1$}   \\
\end{array}\right.
\eeqq
for some $k\in\bbn$ large enough to ensure that $\vph(t_3+1) \;=:\; (\, v^0 - 10^{-k}\,,\, n{'''}\,,\, m{'''}\,,\, h{'''}\,,\, \zeta{'''} \,)$ satisfies 
\beqq\label{sideeffect-gamma2}
 n{'''}\in B_{2\vep}(n^*) \;,\;  m{'''}\in B_{2\vep}(m^*) \;,\; h{'''}\in B_{2\vep}(h^*)\;,\;\zeta{'''} > 1 \;. 
\eeqq 
Let us set $v^{**}:=v^0 - 10^{-k}$. Using notation (\ref{eq:nmh}) with $v$ fixed to $v^{**}$, define $\bar n_s:=\bar n_s(v^{**})$, $\bar m_s:=\bar m_s(v^{**})$, $\bar h_s:=\bar h_s(v^{**})$ for $s\ge t_3+1$, with starting value $(n{'''}\,,\, m{'''}\,,\, h{'''})$ at $(t_3+1)$. According to (\ref{eq:lambda})+(\ref{eq:lambdabis}), these converge exponentially fast to  $n_\infty(v^{**}), m_\infty(v^{**}), h_\infty(v^{**})$, 
and again we have 
$$
\frac{d}{ds }  \vph^5(s) \;=\; \frac{d}{ds }   \vph^1(s) + F ( \vph^1_s, \ldots, \vph^4_s) \;=\;  0 \;+\; F(v^{**}, \bar n_s, \bar m_s, \bar h_s)  \;\lra\; F_\infty(v^{**}) \quad\mbox{as $s\to\infty$} \;, 
$$
$$
\vph^5(t_3+1+\tau) \;:=\;\zeta{'''} + 0 + \int_{t_3+1}^{t_3+1+\tau} F ( v^{**}, \bar n_u, \bar m_u, \bar h_u ) du  \quad\mbox{for finite time $0<\tau<\infty$} \;. 
$$
$F_\infty$ being strictly increasing and $F_\infty(v^0)=0$, see (\ref{function_F})--(\ref{F_infinity}), we have $F_\infty(v^{**})<0$, hence the last integral as a function of $\tau$ is eventually decreasing (with extremely small slope by suitable choice of~$k$). 
During this phase of control, $(\vph^1(s), \vph^2(s), \vph^3(s), \vph^4(s) )$ remains close to its value at time $t_3$ by (\ref{prescription-gamma2})-(\ref{sideeffect-gamma2}) and the behavior of $\bar n_s , \bar m_s , \bar h_s$ above. We stop this phase of control at time $t_4$ defined by 
\beqq\label{def-t_4}
t_4 \;:=\; \inf\{\, t > t_3 \,:\, \vph^5(t) = 1 \,\}.
\eeqq
VI) What we have achieved so far, putting steps I)--V) together, is a control $\vph$ which at time $t_4$ attains 
\beqq\label{finalapprox}
\vph(t_4) \;=\; \left(\, v^*\pm\wt\vep \,,\, n_\infty(v^*)\pm\wt\vep\,,\, m_\infty(v^*)\pm\wt\vep\,,\, h_\infty(v^*)\pm\wt\vep\,,\,  1\,\right)  
\eeqq
for some $\wt\vep>0$ depending on $\vep>0$ and $k\in\bbn$ chosen above. Note that we can achieve (\ref{finalapprox}) for $\wt\vep>0$ arbitrarily small. From now on, the intuitive argument of step 2) above will be a rigorous one: we know that the equilibrium point (\ref{equilibrium-2}) is stable, and start an ultimate phase of control at time $t_4$ in some point (\ref{finalapprox}) arbitrarily close to the equilibrium (\ref{equilibrium-2}). Then with $\vph^5\equiv 1$ on $[t_4,\infty)$  and a control $\,\dot{\tt h}\,$ determined from $\frac{\partial \vph^5}{\partial s}=0$, a 4d deterministic Hodgkin-Huxley 
system with constant input $c=0$ indeed spirals into the desired equilibrium value (\ref{equilibrium-2}), and we are done. 
\end{proof}

\begin{proof}{\bf of Proposition \ref{prop:2}}\quad For OU-HH with state space $E=\bbr\times[0,1]^3\times\bbr$ defined by (\ref{xiHH_general})+(\ref{defOU}), we consider control systems (\ref{CIR-HH-controlled}) with  fifth equation replaced by 
$\,\frac{d}{ds} \vph^5(s) = [S(s) - \vph^5(s)]  + \dot{\tt h}(s) \,.$ The proof proceeds in analogy to the preceding one, essentially simplified since there is no need to  divide by $\sqrt{\vph^5}$ or to prevent $\vph^5$ from leaving the state space. See \cite{OU-HH} for the Lyapunov function. 
\end{proof}

In order to prove Theorems \ref{theo:cham} and \ref{theo:4}, we go back to the notations of Lemma  \ref{lem:1.5} in Section \ref{sec:point}.

\begin{proof}{\bf of Theorem \ref{theo:cham} } \; 1) Consider $C^* = B_{\vep^* } (x^* )$ and $\,D^* = B_{\vep^* } ( y^* ) \subset U^{**}$ as in Lemma  \ref{lem:1.5}: $\;U^{**}$ is an arbitrarily small neighbourhood of $x^*$, and $\vep^*$ can be chosen arbitrarily small. For CIR-HH and OU-HH and any choice of a starting point $x\in E$, we know from the proofs of Lemma \ref{lem:1.5} and Theorem~\ref{theo:1} in Section \ref{sec:harris} that the skeleton chain $(X_{kT})_{k\in\bbn_0}$ visits the sets $C^*$ and $D^*$ infinitely often.\\
2) Recall 
the "chameleon property" of the stochastic HH which is able to mimick with positive probability on fixed time intervals the behaviour of deterministic HH's with quite arbitrary smooth deterministic input $t\to I(t)$. For $ x' = (v', n', m', h', \zeta') \in D^*$, let $\mathbb{Y}_s^{x'} $ denote the deterministic HH with input $I(\cdot)$ starting from $ (v', n', m', h') $ at time $0$, and write 
$\,\mathbb{X}^{x'}_s = (\,\mathbb{Y}^{x'}_s , \zeta' + \int_0^s I(v)dv\,)$. If the system $\,\mathbb{X}^{x'}$ does not leave $E$ during $[0,t]$, then  $\,\{ \,f \in C([0,t],E) : \sup\limits_{ 0\le s \le t } | f(s) -  \mathbb{X}^{x'}_s | \le \vep \,\}\,$ has strictly positive $Q_{x'}$-probability for every $\vep>0$, by Theorem 5 of \cite{HH-TD}. \\
3) Starting from $x'\in D^*$, assertion b) follows from 2) since the first four components of $x^*$ coincide with the equilibrium point for the deterministic HH with constant input $I(\cdot)\equiv 0$ which is attracted to this equilibrium. 
Assertion a) follows from 2) if we choose $t = k_0 T$ with $k_0\in\bbn$  sufficiently large but fixed and $I(\cdot)\equiv c$ sufficiently large such that the deterministic HH $ \,\mathbb{Y}^{x'}$ with constant input $c$ will enter the spiking regime within a finite time (smaller than $k_0T$).    
\end{proof}

\begin{proof}{\bf of Theorem \ref{theo:4} }\quad  
Recall short notation $x$ for $(0,x)$ as starting point for $\ov X$. If we can show 
\beqq\label{to_be_shown}
\mbox{for every  $0<t<\infty$ fixed} \;: \quad 
\wh F_n(t) \;\lra\; F(t) \quad\mbox{$Q_x$-almost surely as $\nto$} \;, 
\eeqq
the remaining uniformity in $t\in\bbr$ follows as in the classical Glivenko-Cantelli theorem for i.i.d.\ random variables (\cite{Bre-84}, \cite{ChoTei-88}). Recall that successive interspike times have no reason to be independent.  
\\
1) For CIR-HH and for OU-HH, all coefficients of the process being real analytic, we can do Nummelin splitting directly in the skeleton chain: by Lemma \ref{lem:2bis} we have continuous Lebesgue densities on ${\rm int}(E)$ and can find some $y^* \in {\rm int} ( E)$ with  $p_{0, T } ( x^* , y ^* ) > 0$, hence there is  $\vep^* > 0 $ such that 
$$
P_{0,T}(x', dy') \;\ge\; \al^*\; 1_{C^*}(x')\; \nu^*(dy') \quad,\quad x',y' \in E   
$$
holds with $C^* = B_{\vep^* } ( x^* ) $, $\,\nu^*$ the uniform law on $B_{\vep^*} (y^*)$, and some $\al^*>0$. 
This improves on Lemma \ref{lem:1.5} above (and on \cite{OU-HH}) and yields a sequence $(R_n)_n$ of stopping times increasing to $\infty$ such that  $R_n+1$ are renewal times for the skeleton chain $\ell \to X_{\ell T}$ which starts afresh from law $\nu^*$ at every time $R_n{+}1$. Count interspike intervals of length $\le t$ starting within the stochastic interval $[ R_jT\,,\, R_{j+1} T[$ by  
\beqq\label{Z-def-3}
Z_j(t) \;:=\; \sum_{\ell=1}^\infty 1_{ \left\{ R_j T \;\le\; \tau_\ell \;<  R_{j+1} T \right\} } \; 1_{[0, t ]} (\tau_{\ell+1}-\tau_\ell) \;.  
\eeqq
Interspike intervals having at least length $\delta>0$ by definition in (\ref{def-tau_n}),    
$$
Z_j(\infty) \;=\; \sum_{\ell=1}^\infty 1_{ \left\{ R_j T \;\le\; \tau_\ell \;<  R_{j+1} T \right\} } \;\le\;O(\frac{1}{\delta}) (R_{j+1}  - R_j) T
$$
has finite expectation under $Q_{\ov\mu}$, by positive Harris recurrence with invariant measure $\ov\mu$. 
\\
2) Keeping $t $ in (\ref{Z-def-3}) fixed, we choose $k \in \bbn $ such that $t \le (k-2) T .$ Then $Z_j( t) $ is  measurable w.r.t. ${\cal F}_{(R_{j+1}T+(k-2)T)}$ and thus, the distance between successive renewal times being at least $1$, measurable w.r.t. $\,{\cal F}_{R_{j+k-1}T}\,$. As a consequence,  
\begin{equation}\label{derniere_astuce}
\mbox{
$Z_j( t) $  is ${\cal F}_{R_{j+k-1}T}-$ measurable and independent of $(X_{(R_{j+k-1}+1)T+s})_{ s \geq 0} $  \;,
}\end{equation}
since $R_{j+k-1}{+}1$ in (\ref{derniere_astuce}) is a renewal time for the skeleton chain. Exploiting again $\,R_{j+k-1}{+}1 \le R_{j+k}\,$, (\ref{Z-def-3}) and (\ref{derniere_astuce}) imply that the rv's $\,(Z_{j + \ell k })_{\ell \geq 1}$ are i.i.d.\ for every fixed value of $j$. Thus   
$$ 
\lim_{n \to \infty } \frac{1}{nk} \sum_{m=1}^{nk} Z_m(t) \;=\; \frac{1}{k} \sum_{j=1}^{k} 
\left( \lim_{n \to \infty } \frac1n \sum_{\ell = 0 }^{n-1}  Z_{j + \ell k } \right) \;=\; C\, E_{\ov\mu} (Z_1 (t) )  
$$ 
by the classical strong law of large numbers,   
with $C$ some norming constant from choice of life cycles. 
\\
3) $\,Z_j(\infty)$ being $\calf_{R_{j+1}T}$-measurable and independent of $(\calf_{(R_{j+1}+1)T+s})_{s\ge 0}$ whereas $\,R_{j+1}{+}1 \le R_{j+2}\,$, both $(Z_{2\ell}(\infty))_\ell$ and $(Z_{1+2\ell}(\infty))_\ell$ are by definition families of i.i.d\ r.v.'s. In analogy to 2) we obtain   
$
\,\lim\limits_{\nu \to \infty } \frac{1}{\nu} \sum_{j=1}^{\nu} Z_j(\infty) = C\, E_{\ov\mu} (Z_1 (\infty) )  \,. 
$
Combining 2) and 3), the limit of the ratios 
$$
\lim_{\nu \to \infty } \frac{ \frac{1}{\nu} \sum_{j=1}^{\nu} Z_j(t) }{ \frac{1}{\nu} \sum_{j=1}^{\nu} Z_j(\infty) } \;=\; \frac{ E_{\ov\mu} (Z_1 (t) ) }{ E_{\ov\mu} (Z_1 (\infty) ) } \;=:\; F(t)
$$
exists $Q_x$-almost surely for every $x\in E$, and Theorem \ref{theo:4} is proved. 
\end{proof}

\section{Appendix: Proof of Theorem \ref{theo:4bis}} \label{sec:app}
%
%

In this appendix section we give a (somewhat technical and lengthy) proof for Theorem \ref{theo:4bis}. Notations are those established before or during the formulation of Theorem \ref{theo:4bis}; periodicity of the drift in the time variable is not really needed. The only assumptions on the process $X$ and its state space $E$ are those stated in Assumption \ref{ass:1}. In part a) of this assumption, we considered a strictly increasing sequence $(G_m)_m$ of bounded convex open sets in $\bbr^d$ such that, with $C_m := {\rm cl}(G_m),$ $\,E = \mathop{\bigcup}\limits_m C_m.$  According to part b) and b'), $\partial E \cap E$ is entrance, and from boundary points $x\in C_m\setminus G_{m+1}$, almost surely, the process $X$ immediately enters $G_{m+1}$. 
By part~c), stopping times $T_m= \inf\{ t > 0 : X_t\notin C_m\}$ are such that $ T_m \uparrow \infty $ almost surely, for every choice of a starting point $x \in E. $

{\bf Proof of Theorem \ref{theo:4bis} b) } 
We proceed by localization which works as follows. \\
1) By part~d) of Assumption \ref{ass:1}, the coefficients of SDE (\ref{sde}) are defined and $\calc^\infty$ on $[0,\infty){\times}U$, for some open $U$ containing $E=\bigcup_n C_n$. 
This allows to construct a second increasing sequence $(U_n)_n$ of bounded open convex sets such that $C_n\subset U_n$ and ${\rm cl}(U_n)\subset U$ for all $n\in\bbn$, and coefficients $(t,x)\to b_n(t,x)$ and $x\to \si_n(x)$ defined for $t\ge 0$ and $x\in\bbr^d$ and satisfying  
$$
\left\{ \begin{array}{l}
\mbox{$b_n(\cdot,\cdot)$ is bounded and globally Lipschitz on $[0,\infty){\times}\bbr^d$} \;;\\ 
\mbox{$\si_n(\cdot)$ is $\,\calc^2$ on $\bbr^d$, bounded and with bounded derivatives up to order $2$} \; ; \\
\mbox{$b_n(t,x)=b(t,x)\,$ on $[0,\infty){\times}U_{n+1}$} \;;\\
\mbox{$\si_n(x)=\si(x)\,$ on $U_{n+1}$}  \;. 
\end{array}\right.
$$
At stage $n$ of the localization, we have deterministic controls $\vph_n = \vph_n^{({\tt h}, x)}:[0,\infty)\to\bbr^d$ solving
\beqq\label{controlsystem-n}
d \vph_n (t) \;=\; \wt b_n ( t, \vph_n (t) )\, dt \;+\; \si_n( \vph_n (t) )\, \dot{\tt h}(t)\, dt  \;\;,\;\; t\ge 0 \;\;,\;\; \vph_n(0)=x
\eeqq
for all $x\in\bbr^d$ and $\,{\tt h}:[0,\infty)\to\bbr^m\,$ having absolutely continuous components $ {\tt h}^{\ell} (t) = \int_0^t \dot{\tt h }^{\ell} (s) ds $ such that $\,\dot{\tt h}^{\ell} \in L^2_{\rm loc}$, with $\wt b_n(\cdot,\cdot)$ the Stratonovich form of $b_n(\cdot,\cdot)$. 
We have a unique strong solution $X_n$ to 
\beqq\label{sde-n}
dX_n(t) \;=\; b_n(t,X_n(t))\, dt\;+\; \si_n(X_n(t))\, d W_t   \quad,\quad   t\ge 0  
\eeqq
driven by the Brownian motion $W$ of equation (\ref{sde}). We suppose that $X_n$ is defined on the same measurable space $(\Omega, {\cal A})$ as $X,$ for every choice of a starting point $x\in\bbr^d$; and we write $P_x$ for the unique probability measure on $(\Omega , \cala)$ under which all $X_n$ start from $X_n ( 0 ) = x .$ 
For arbitrary $0<t_0<\infty$ and $x\in\bbr^d$, \cite{MilSan-94} gives a control theorem for $X_n$: the support of the law of $X_n$ up to time $t_0$ starting from $x$ is the closure in $C([0,t_0],\bbr^d)$ of the set 
$$
\wt A^n_x \;:=\; \left\{  \left( \vph_n^{({\tt h}, x)}(t) \right)_{0\le t\le t_0} : \,{\tt h}\in{\tt H}\;\;\mbox{admissible}\; \right\} \;. 
$$

\vskip0.3cm
2) In the following steps 2), 3) and 4), we consider $x\in C_n$, $m,l\geq n$, and $\wt m>m$. 

We introduce stopping times $T^m_l:=\inf\{ t : X_m(t)\notin C_l\}$ and `stop' control paths (\ref{controlsystem-n}) at deterministic times $t^m_l = t^m_l({\tt h}, x) := \inf\{ t : \vph_m^{({\tt h},x)}(t)\notin C_l\}$. 
Note that a process $X_m$ may leave $E$ in finite time: hence $\sup_l T^m_l$ as well as $\sup_l t^m_l({\tt h}, x)$ may be finite. 

Recall the stopping times $T_l=\inf\{t :X_t\notin C_l\}$ from Assumption \ref{ass:1}. 
Necessarily $X_{T_m}\in C_m$ on $\{T_m<\infty\}$ since $C_m$ is compact. Then either $X_{T_m}\in G_{m+1}$, or $X_{T_m}\in C_m\setminus G_{m+1}$ in which case $(X_{T_m+s})_{s\ge 0}$ has to enter $G_{m+1}$ immediately, cf.\ Assumption \ref{ass:1}. In both cases, $(X_{T_m+s})_{s\ge 0}$ will  spend some positive amount of time in the open set $G_{m+1}$. Hence $T_m<T_{m+1}$ in both cases. As a consequence, we have $T_l<T_{l+1}$  on $\{T_l<\infty\}$ $Q_x$-almost surely for all $ l \geq n$,  together with  $T_l\uparrow\infty$ as $l\to\infty$ $Q_x$-almost surely by Assumption \ref{ass:1}. 

Paths of $X_m$ and $X$ starting from $x$ coincide up to (and even slightly beyond) time $T_{m+1}=T^m_{m+1}$. Paths of $X_{\wt m}$ and $X_m$ starting from $x$ coincide up to time $T_{m+1}$, and controls $\vph_{\wt m}^{({\tt h},x)}$ and $\vph_m^{({\tt h},x)}$ coincide up to time $t^{\wt m}_{m+1}({\tt h}, x)=t^m_{m+1}({\tt h}, x)$. 

Notice also that $t^{n- 1}_{n}({\tt h}, x)=t^n_{n}({\tt h}, x)$ since $x \in C_n. $

3) Control paths $\,\vph_m^{({\tt h}, x)}$ for $X_m$ starting at points $x\in C_n{\setminus}G_{n+1}$ remain in $C_{n+1}$ during some time interval $[0,\wt t_0]$ of positive length $\wt t_0>0$. This is seen as follows: by \cite{MilSan-94} or step 1), paths of $X_m$ can be approximated by control paths $\vph_m^{({\tt h}, x)}$ and vice versa uniformly in time $0\le t\le t_0$, for every $t_0$ fixed;  $\, X_m$ coincides with $X$ up to time $T_{m+1}\geq T_{n+1}>0$, and paths of $X$ starting in $C_n\setminus G_{n+1}$ immediately enter $G_{n+1}$ by Assumption 1.

4) When $x \in C_q$ and $t^q_{q+1}({\tt h}, x)<\infty$, we have  $\,t^m_{m+1}({\tt h}, x)>t^q_{q+1}({\tt h}, x)\,$ for all $m>q$. To see this, put $n:=q+1$ and assume $t_n^m({\tt h}, x) <\infty$ (otherwise there is nothing to prove). 
A control path $\vph_m^{({\tt h},\cdot)}$ leaving $C_n$ at time $t^m_n$ has to visit points of $\partial(C_n)$ which either belong to $C_n\setminus G_{n+1}$ or to $G_{n+1}$; in the first case, $\,\vph_m^{({\tt h},x)}$ remains in $C_{n+1}$ for some positive amount of time, by step 3). Thus in both cases, $\,\vph_m^{({\tt h},x)}$ needs a positive amount of time to leave $C_{n+1}$;  this happens at time $t^m_{n+1}\le \infty$. Thus $t^m_{n+1} ( {\tt h }, x ) $ is strictly greater than $t^m_n( {\tt h }, x )=t^{n-1}_n( {\tt h }, x )$.

5) For $x\in E$ and $\dot {\tt h}\in L^2_{\rm loc}$, define $t\to \vph^{({\tt h},x)}(t)$ by pasting together controls $\vph_l^{({\tt h},x)}$ defined so far: 
\beqq\label{pastingtogether}
\vph^{({\tt h},x)}(t)  \;:=\; 1_{\left] 0 \,,\, t^1_2({\tt h},x)\right] }(t)\; \vph_1^{({\tt h},x)}(t) \;+\; 
\sum_{l\ge 2}  1_{\left] t^l_l({\tt h},x) \,,\, t^l_{l+1}({\tt h},x)\right] }(t)\; \vph_l^{({\tt h},x)}(t)  \quad,\quad \vph^{({\tt h},x)}(0)=x \;. 
\eeqq
Notice that for $x \in C_n \setminus C_{n-1}, $ the above sum starts with $l = n- 1 , $ for which $ t^{n-1}_{n-1} ({\tt h},x) =0$ and $ t^{n-1}_{n} ({\tt h},x) \geq 0.$ Recall also $t^l_l  ({\tt h},x)=t^{l-1}_l  ({\tt h},x)$ for all $ l \geq n$, from step 2). Hence the above construction can  be achieved on a time interval 
\beqq\label{maximaltimeinterval}
0 \;\le\; t \;<\;  s({\tt h},x) \;:=\; \sup \limits_l t^l_{l+1}({\tt h},x) \;\le\; \infty \;. 
\eeqq
For $m \geq n$ and $x\in C_n$, $\,t \to \vph^{({\tt h},x)}(t)\,$ defined in (\ref{pastingtogether}) coincides with $\,t \to \vph_m^{({\tt h},x)}(t)\,$ on $\,[0,t^m_{m+1}({\tt h},x)]\,$ by step 2). Thus $\vph^{({\tt h},x)}$ solves equation (\ref{controlsystem})
$$
d \vph (t) \;=\; \wt b ( t, \vph (t) )\, dt \;+\; \si( \vph (t) )\, \dot{\tt h}(t)\, dt   \quad,\quad    \vph(0)=x
$$
up to time  $t^m_{m+1}({\tt h},x)$.  There is no reason that $s({\tt h},x)$ in (\ref{maximaltimeinterval}) should be infinite for all $x\in E$, $\dot {\tt h}\in L^2_{\rm loc}$. 

6) For every $\om$ fixed and every starting point $x\in E$ we can find some $\ell$ such that $X(t)\in G_\ell$ for $0<t\le 2t_0$, with $\ell$ depending on $\om$ and $x$. When $x \in {\rm int}(E)$, we have $X(t)\in {\rm int}(E)$ for $0\le t\le 2t_0$, $\,\partial(E)\cap E$ being entrance by Assumption \ref{ass:1}~b), and the assertion follows from ${\rm int}(E)=\bigcup_\ell G_\ell$, continuity of the path and compactness of $[0,2t_0]$. When $x\in \partial(E)\cap E$, we have $x\in C_{\ell_1}\setminus G_{\ell_1+1}$ for some $\ell_1$, thus the path has to enter $G_{\ell_1+1}$ immediately by Assumption \ref{ass:1}~b'): then there is $\vep>0$ such that $X(t)\in G_{\ell_1+1}$ for $0< t\le 2\vep$, and we argue as above in restriction to the compact $[\vep,2t_0]$.

7) Fix $x\in C_n$ and consider $m > n$. Viewing  $\vph_m^{({\tt h},x)}$ of (\ref{controlsystem-n}) as controls for $X_m$ on the time interval $[0,2t_0]$, there is a sequence $({\tt h}_m(W))_m$ of adapted linear interpolations of the driving Brownian path $(W_t)_{t\ge 0}$ with the property 
$$
\mbox{for every $m>n$} \;:\quad 
E_x \left( \, \sup_{0\le t\le 2t_0}\left| X_m(t) - \vph_m^{({\tt h}_m(W),x)}(t)\right| ^2 \,\right)  \;\;\le\;\;  2^{-3m} \;.
$$
To see this, recall that all equations (\ref{sde-n}), $n\ge 1$, are driven by the same $W$, and construct $({\tt h}_m(W))_m$ by extracting a subsequence from the sequence of adapted linear interpolations in (2.6)+(2.7) of \cite{MilSan-94}. Transform the last assertion into 
$$
P_x\left( \, \sup_{0\le t\le 2t_0}\left| X_m(t) - \vph_m^{({\tt h}_m(W),x)}(t)\right| > 2^{-m} \,\right)  \;\;\le\;\;  2^{-m}  
\quad\mbox{for all $m>n$} \;.  
$$
Since $X$ coincides with $X_m$ up to time $T_{m+1}$, Borel-Cantelli then yields 
$$
\sup_{0\le t\le 2t_0}\left| X(t) - \vph_m^{({\tt h}_m(W),x)}(t)\right| \;\le\;  2^{-m} \quad\mbox{for eventually all $m$}  
$$
$P_x$-almost surely. 
Combining this last assertion with the result of step 6) we see that the event  
\beqq\label{property-1}
\left\{\begin{array}{l}
\mbox{there exists some $\wt m<\infty$ such that $\,X_t \in G_{\wt m + 1}\,$ for  $0< t\le 2 t_0$ } \;,\\ 
\mbox{and}\;\;  \sup\limits_{0\le t\le 2t_0}\left| X(t) - \vph_m^{({\tt h}_m(W),x)}(t)\right| \;\le\;  2^{-m} \;\;\mbox{for all $m> \wt m$} 
\end{array}\right.
\eeqq
has $P_x$-probability equal to $1$.

8) To conclude the proof, fix $\om$ belonging to the event (\ref{property-1}). Note that  $X(t) \in G_{\wt m +1}={\rm int}(C_{\wt m +1})$ for $0<t\le 2t_0$ implies $2t_0 < T_{\wt m +1}$. Write for short $\,f_m := \vph_m^{({\tt h}_m(W),x)}\,$ in (\ref{property-1}). By step 7), $f_m$ is a control path for $X$ up to time $T_{m+1}$, and leaves $C_{m+1}$ at time $t^m_{m+1}({\tt h}_m(W),x)$. All this depends on $\om$. By definition in (\ref{pastingtogether}), we can embed $\,f_m\,1_{[\, 0 \,,\, t^m_{m+1}({\tt h}_m(W),x)\, ]}\,$ into $\,\vph^{({\tt h}_m(W),x)}\,$ which admits some (finite or infinite) life time ${\tt s}({\tt h}_m(W),x)$. Steps 4) and 5) then show  
\beqq\label{order_of_times}
{\tt s}({\tt h}_m(W),x) \;\;\ge \;\; t^m_{m+1}({\tt h}_m(W),x) \;>\;  t^{\wt m}_{\wt m+1}({\tt h}_m(W),x)
\eeqq
if $t^{\wt m}_{\wt m+1}({\tt h}_m(W),x)<\infty$, for every $m> \wt m$. Keeping $\om$ fixed, we have functions taking values in $C_{\wt m +1}$  
$$
t \;\lra\;  f_m(t) = \vph^{({\tt h}_m(W),x)}(t) \;\;,\;\;  0\le t\le t^{\wt m}_{\wt m+1}({\tt h}_m(W),x)  
$$
for $m> \wt m$, and a function 
$$
t \;\lra\; X(t) \;\;,\;\; 0\le t\le T_{\wt m +1}
$$
which for $0< t\le 2t_0 < T_{\wt m +1}$ remains in the interior $G_{\wt m +1}={\rm int}(C_{\wt m +1})$ by (\ref{property-1}). From uniform convergence on $[0,2t_0]$ as $m\to\infty$ in (\ref{property-1}) we obtain 
$$
2t_0 \;\;\le \;\; \liminf\limits_{m}\; t^{\wt m}_{\wt m+1}({\tt h}_m(W),x)  
$$
and thus in virtue of  (\ref{order_of_times})
$$
2t_0 \;\; \le \;\; \liminf\limits_{m}\; {\tt s}({\tt h}_m(W),x) \;. 
$$
This establishes  that  the support of $Q^{t_0}_x = \call(\, X\, 1_{[0,t_0]} \mid P_x)$ in the sense of uniform convergence in $C([0,t_0],E)$ is contained in the closure of 
$$
\left\{  \left( \vph^{({\tt h}, x)}(t) \right)_{0\le t\le t_0} : \,{\tt h}\in{\tt H}\;\;\mbox{such that $s({\tt h}, x)>t_0$}\; \right\} 
$$
and thus in the closure of (\ref{newcontrolsystems}). The proof of Theorem \ref{theo:4bis} b) is finished. {\hfill $\bullet$ \vspace{0.25cm}}\\

{\bf Proof of Theorem \ref{theo:4bis} a) } 
The inverse inclusion now follows easily. Suppose that $x \in C_n.$ Since $ \vph^{({\tt h }, x)} (s) \in {\rm int} ( E) $ for all $0 < s \le \wt T$ where $\wt T > t_0$, there exists necessarily $ m \geq n $ such that $  \vph^{({\tt h }, x)} (s)  \in G_{m+1} $ for all $ s \in ] 0, \wt T ].$ Applying step 5) of the first part of the proof, we know that $ \vph^{({\tt h }, x)}  $ coincides with $\vph_m^{({\tt h }, x)} $ on $ [ 0, t^m_{m+1} ( {\tt h }, x ) ] . $ Since 
$ t^m_{m+1} ( {\tt h }, x ) \geq \wt T > t_0,$  
this implies that $ \vph^{({\tt h }, x)} =  \vph_m^{({\tt h }, x)} $ on $[0,\wt T].$ Applying step 2) of the first part of the proof, we obtain 
$$
\;{\vph_m^{ ( {\tt h }, x )}}_{| [0, \wt T]} \in \overline{ {\rm supp} \left( Q_x^{m, \wt T } \right)}\; ,
$$
where $ Q_x^{m, \wt T} $ denotes the law of the solution $ (X^m_t)_{0 \le t \le \wt T} $ starting from $X_0 = x .$ We also clearly have that $(X^m_t)_{0 \le t \le \wt T} $ stays in $G_{m+1} $ with positive $P_x$-probability. Since $ X^m_t = X_t $ for all $t \le T_{m+1}, $ this implies that $T_{m+1}\ge \wt T$ with positive $P_x$-probability, thus 
$ 
\;\vph^{ ( {\tt h }, x )}_{| [0, \wt T]}\in \overline{ {\rm supp} \left( Q_x^{\wt T } \right)}\;.
$
{\hfill $\bullet$ \vspace{0.25cm}}


\end{document}